\theoremstyle{plain}
\newtheorem{theorem}{Theorem}[section]
\newtheorem{lemma}[theorem]{Lemma}
\newtheorem{question}[theorem]{Question}
\newtheorem{corollary}[theorem]{Corollary}
\newtheorem{fact}[theorem]{Fact}
\theoremstyle{definition}
\newtheorem{remark}[theorem]{Remark}
\newtheorem{definition}[theorem]{Definition}
\DeclareMathOperator{\DOWN}{DOWN}
\DeclareMathOperator{\UP}{UP}
\DeclareMathOperator{\lift}{lift}
\begin{document}

\newcommand{\mb}[1]{\mathbb{#1}}
\newcommand{\mc}[1]{\mathcal{#1}}
\newcommand{\Gr}{\text{Gr}}
\newcommand{\Mat}{\text{Mat}}
\newcommand{\Le}{\scalebox{-1}[1]{L}}
\newcommand{\vcm}[1][1]{\vspace*{#1 cm}}
\newcommand{\tcd}{2\leftrightarrow 2}
\newcommand{\mn}{\Delta_{\text{min}}}
\newcommand{\mx}{\Delta_{\text{max}}}
\newcommand\todo[1]{\textcolor{red}{TODO: #1}}

\title{Flip cycles in plabic graphs}
%\date{\today}
%\author[1]{Alexey~Balitskiy}
%\author[2]{Julian~Wellman}
%\affil[1,2]{Dept. of Mathematics, Massachusetts Institute of Technology, 182 Memorial Dr., Cambridge, MA 02142}
%\affil[1]{Dept. of Mathematics, Moscow Institute of Physics and Technology, Institutskiy per. 9, Dolgoprudny, Russia 141700}
%\affil[1]{Institute for Information Transmission Problems RAS, Bolshoy Karetny per. 19, Moscow, Russia 127994}

\author{Alexey~Balitskiy{$^\clubsuit$}}

\email{\vspace{-0.3cm}balitski@mit.edu}

\author{Julian~Wellman{$^\spadesuit$}}

\email{wellman@mit.edu}

\address{{$^\clubsuit$}{$^\spadesuit$} Dept. of Mathematics, Massachusetts Institute of Technology, 182 Memorial Dr., Cambridge, MA 02142, USA}
\address{{$^\clubsuit$} Institute for Information Transmission Problems RAS, Bolshoy Karetny per. 19, Moscow, Russia 127994}

\thanks{{$^\clubsuit$} Supported by the Russian Foundation for Basic Research grant 18-01-00036.}

\subjclass[2010]{05E99, 52C22}

\begin{abstract}
Planar bicolored (plabic) graphs are combinatorial objects introduced by Postnikov to give parameterizations of the positroid cells of the totally nonnegative Grassmannian $\Gr^{\geq 0}(n,k)$. Any two plabic graphs for the same positroid cell can be related by a sequence of certain moves. The \emph{flip graph} has plabic graphs as vertices and has edges connecting the plabic graphs which are related by a single move. A recent result of Galashin shows that plabic graphs can be seen as cross-sections of zonotopal tilings for the cyclic zonotope $Z(n,3)$. Taking this perspective, we show that the fundamental group of the flip graph is generated by cycles of length 4, 5, and 10, and use this result to prove a related conjecture of Dylan Thurston about triple crossing diagrams. We also apply our result to make progress on an instance of the generalized Baues problem.
\end{abstract}

\maketitle

%\section{TODO}

%\begin{itemize}
%\item Redefine plabic as trivalent plabic, as in Postnikov's ICM report.
%\item Add notation $v_S := \sum_{i \in S} v_i$ to the definitions in Section 3
%\item Also add notation $Sab := S \cup \{a,b\}$ and rewrite things accordingly
%\end{itemize}

\section{Introduction}\label{Intro}

A \emph{flip graph} for our purposes is the graph whose vertices form the set of all diagrams of a particular class, and whose edges correspond to \emph{flips} in these diagrams, which are mutations which transform one diagram into a similar diagram with one small thing changed. A common question to ask is if the flip graph is connected, that is, can any two objects in the set be related by a sequence of flips? For Postnikov's moves on plabic graphs~\cite{Pos06}, the answer is affirmative. In this paper we investigate the next natural topological question for some flips graphs that are known to be connected: is there a nice set of ``simple cycles'' which generate the fundamental group of the flip graph?

Possibly the most famous example of a flip graph is that of triangulations of an $n$-gon, whose flip graph forms the 1-skeleton of the \emph{Stasheff associahedron}. Although not novel (known at least since Stasheff's work~\cite{Sta63}), a nice corollary of our result is that the fundamental group of the 1-skeleton of the associahedron is generated by cycles of length four and five. Another famous flip graph has \emph{domino tilings} of a planar region as its vertices; in \cite{Thu90} it is proved that the flip graph is connected (provided that the region is simply connected) through a \emph{height function} on tilings. The methods used in this paper for zonotopal tilings are somewhat reminiscent of the height function idea.
%, which also gives formula for computing the distance between tilings. 
Dylan Thurston~\cite{ThuD17} introduced triple crossing diagrams, which are a generalization of domino tilings, proved that the flip graph is connected, and made a conjecture about the fundamental group of the flip graph. One of the results of this paper is a proof of that conjecture. 
%\newpage

We will study several different flip graphs, whose objects and flips are as follows:\begin{itemize}
\item Fine zonotopal tilings of the cyclic zonotope $Z(n,d)$, with flips corresponding to switching between the two tilings of $Z(d+1,d)$.
\item Reduced trivalent plabic graphs (or plabic triangulations) for a given strand connectivity, with flips corresponding to the moves (M1)--(M3) in Figure~\ref{plabicmovespng}. 
\item Reduced bipartite plabic graphs (or plabic tilings) for a given connectivity, considered modulo the contraction/uncontraction moves (see Figure~\ref{plabicmovespng}). The flips are only given by the square move (M2).
\item Triple crossing diagrams for a given connectivity, with flips being $\tcd$ moves (see Figure~\ref{tcdflipng}).
\end{itemize}

For definitions of these objects and flips, see Section~\ref{Z} for zonotopal tilings, Section~\ref{plabic} for plabic graphs and tilings, and Section~\ref{tcd} for triple crossing diagrams. For those familiar with the language of weakly separated collections (which we avoid in this paper), it is worth mentioning how these are related to our objects.
\begin{itemize}
    \item Reduced plabic graphs (and plabic tilings) are in correspondence with weakly separated collections in $\binom{[n]}{k}$. The parameters $n$ and $k$ here are the size and the helicity of the corresponding strand permutation (as defined in Section~\ref{plabic}).
    \item Triple crossing diagrams are in correspondence with compatible pairs of a weakly separated collection in $\binom{[n]}{k}$ and a weakly separated collection in $\binom{[n]}{k+1}$. They are also in bijection with plabic graphs with white vertices of degree three (or plabic tilings with a specified white triangulation), or with plabic graphs with black vertices of degree three (plabic tilings with a specified black triangulation). The $\UP$ and $\DOWN$ maps (see Definition~\ref{updown}) switch the latter two descriptions.
    \item Reduced trivalent plabic graphs (or plabic triangulations) are in correspondence  with compatible triples of weakly separated collections in $\binom{[n]}{k-1}$, $\binom{[n]}{k}$, and $\binom{[n]}{k+1}$.
\end{itemize}

Our main theorem regards generating sets for the fundamental group of the flip graphs when considered as a 1-complex. Though we later phrase our theorems as proving that a 2-complex made out of the flip graph with certain 2-cells glued is simply connected, here we will simply state the sizes of the cycles which generate the fundamental group.

\begin{theorem}\label{mainthm}
The fundamental groups for our flips graphs on the following objects are generated by cycles with sizes as follows \begin{enumerate}
\item Fine zonotopal tilings of $Z(n,d)$, by cycles of sizes 4 and $2d+4$.
\item Reduced trivalent plabic graphs, by cycles of sizes 4, 5 (two types), and 10 (two types).
\item Reduced bipartite plabic graphs, by cycles of sizes 4 and 5.
\item Triple crossing diagrams, by cycles of sizes 4, 5, and 10.
\end{enumerate}
\end{theorem}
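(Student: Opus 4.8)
\emph{Proof strategy.} The plan is to prove part~(1) first and then transport it to parts~(2)--(4) through Galashin's realization of plabic graphs as cross-sections of fine zonotopal tilings of $Z(n,3)$, so the heart of the argument is the statement for zonotopal tilings. Following the reformulation mentioned above, let $X_{n,d}$ be the $2$-complex obtained from the flip graph of fine zonotopal tilings of $Z(n,d)$ by attaching a $2$-cell along every $4$-cycle and every $(2d+4)$-cycle; the goal is $\pi_1(X_{n,d})=1$. I would argue by induction on $n$ with $d$ fixed. When $n\le d+1$ the zonotope $Z(n,d)$ has at most two fine tilings, so $X_{n,d}$ is a point or a single edge and there is nothing to prove; the first interesting case is $Z(d+2,d)$, whose flip graph carries the ``fundamental relation'' and is what the $(2d+4)$-cells are modelled on, while the $4$-cells record pairs of flips supported on disjoint regions.

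For the inductive step I would use the deletion map $\delta\colon Z(n,d)\to Z(n-1,d)$ that forgets the last generator $v_n$. By the Bohne--Dress correspondence $\delta$ takes fine tilings to fine tilings and each flip to a flip or to a constant, so it descends to a cellular map $X_{n,d}\to X_{n-1,d}$. Given a loop $\gamma$ in $X_{n,d}$, push it forward: by induction $\delta(\gamma)$ bounds a disk in $X_{n-1,d}$, and I would build a nullhomotopy of $\gamma$ over this disk by a homotopy-lifting-type argument. Two lemmas feed into this. First, a \emph{fiber lemma}: for a fixed tiling $T'$ of $Z(n-1,d)$ the set $\delta^{-1}(T')$ of ways to reinsert $v_n$, with its induced flips, is the cover graph of a distributive lattice (the lattice of monotone reinsertion data along the one-dimensional fibers of the projection), so every loop inside a single fiber is a product of diamonds, which are $4$-cycles of $X_{n,d}$. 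Second, a \emph{lifting lemma}: the boundary of each $2$-cell of $X_{n-1,d}$ pulls back to a loop of $X_{n,d}$ that is a product of fiber diamonds together with at most one $(2d+4)$-cycle (or one $4$-cycle). Combining the two, every loop of $X_{n,d}$ is a product of $4$- and $(2d+4)$-cycles, so $X_{n,d}$ is simply connected.

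With part~(1) in hand, parts~(2)--(4) reduce to translating between moves and flips in the case $d=3$, where $2d+4=10$. A single flip of a fine zonotopal tiling of $Z(n,3)$, which locally switches the two tilings of $Z(4,3)$, is realized by an explicit word in the plabic moves (M1)--(M3) of Figure~\ref{plabicmovespng}, so there is a $\pi_1$-surjection from the augmented plabic flip graph onto $X_{n,3}$; its kernel is generated by the ``pentagon'' identities asserting that two such words have the same effect, together with the identity that a move followed by its inverse is trivial. Attaching $2$-cells along the corresponding $5$-cycles and along the $4$- and $10$-cycles pulled back from $X_{n,3}$ therefore yields a simply connected complex. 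The doubling of the $5$- and $10$-cycle types reflects which vertex colour ``leads'' in the local picture; for reduced bipartite plabic graphs one first attaches the contraction/uncontraction $2$-cells, which make the $10$-cycles contractible and identify the two families of $5$-cycles, leaving only sizes $4$ and $5$; and the triple crossing case is obtained the same way through the plabic--triple-crossing dictionary, the $\tcd$ move of Figure~\ref{tcdflipng} being finer than a zonotopal flip and hence again producing a genuine $5$-cycle relation.

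The main obstacle is the lifting lemma, and inside it the need to control how the reinsertion lattices over the various tilings along the boundary of a downstairs $2$-cell fit together --- the ``monodromy'' of the fiber lattices around that cell --- and to check that the resulting upstairs loop decomposes into the prescribed cycles; tracking this monodromy is precisely what forces the length $2d+4$. A secondary, purely combinatorial difficulty is verifying that the pentagon relations, together with their colour-reversed and triple-crossing analogues, really do generate the kernel of the flip-graph map, which comes down to a finite but delicate enumeration of the ways the local pictures of (M1)--(M3) and of the $\tcd$ move can overlap.
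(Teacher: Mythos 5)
Your plan diverges from the paper on both halves, and while the part~(1) sketch is at least a coherent alternative strategy, the reduction from~(1) to~(2)--(4) has a direction-of-implication problem that I think is fatal as stated.

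For part~(1) the paper does \emph{not} induct on $n$: it treats the flip graph as the Hasse diagram of Ziegler's higher Bruhat order, so that monotone flip paths from $\Delta_{\min}$ to $\Delta_{\max}$, taken up to commutation of disjoint flips, biject with fine tilings of $Z(n,d+1)$. A loop passing through $\Delta_{\min}$ and $\Delta_{\max}$ is contracted by dragging one monotone half to the other across $4$-cells (commutations) and $(2d+4)$-cells (flips in $Z(n,d+1)$), and an arbitrary loop is reduced to a product of such loops by conjugating each edge down to $\Delta_{\min}$. Your deletion map $\delta\colon Z(n,d)\to Z(n-1,d)$ with induction on $n$ is a genuinely different idea, but everything is resting on the two lemmas you flag, and neither is proved. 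The fiber lemma --- that the reinsertions of $v_n$ over a fixed tiling of $Z(n-1,d)$ form the cover graph of a distributive lattice --- is believable for $d=2$ (de Bruijn pseudolines, height functions) but needs a real argument in higher rank; and the lifting lemma, which you yourself call the main obstacle, is precisely where the combinatorics lives. As written this is a programme, not a proof.

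The more serious issue is the reduction to parts~(2)--(4). You posit a ``$\pi_1$-surjection from the augmented plabic flip graph onto $X_{n,3}$.'' But a surjection $\pi_1(\mathbb X_{\pi^:})\twoheadrightarrow\pi_1(X_{n,3})=1$ conveys no information at all about $\pi_1(\mathbb X_{\pi^:})$; to get the theorem you would have to determine the kernel, and showing that the kernel is generated by the stated pentagons is essentially the theorem itself, not an easier auxiliary fact. The useful map goes the other way: many tilings project to the same plabic cross-section, so loops at level $k$ must be \emph{lifted} to loops of tiling flips (which requires Lemmas~\ref{squareflips}, \ref{cleanmove}, \ref{fixing} --- a flip may do nothing at level $k$, and a trivalent move at level $k$ typically has to be prepared by auxiliary flips in other layers). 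After lifting, one contracts upstairs using part~(1) and shadows each $2$-cell down to a vertex, edge, quadrilateral, pentagon, or decagon at level $k$. That is what the Appendix does, and it only covers the cyclic connectivities $\pi(n,k)$. The main proof of Theorem~\ref{plabicycles} handles arbitrary decorated permutations $\pi^:$ by a different mechanism altogether: induction on helicity, using $\UP$/$\DOWN$ on Grassmann necklaces (Definitions~\ref{updown}, \ref{updowngeneral}, Lemma~\ref{updownnecklace}) to pass a cellular contraction between layers $k-1$ and $k$. Your proposal does not address non-cyclic $\pi^:$ at all. Finally, the description of part~(3) is reversed: the two families of $5$-cycles of part~(2) (pure white or pure black trivalent pentagons) become constant, not identified, in the square flip graph, while the $10$-cycles do not become contractible but become the $5$-cycles of square moves; this is exactly what the proof of Corollary~\ref{sqmovecycles} uses.
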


The first part is the subject of Section~\ref{Z}, and is a result of using Ziegler's results on the higher Bruhat order poset \cite{Zie91} to generalize the proof for $d=2$ given by Henriques and Speyer~\cite{HenSpe10} (for the flips in the rhombus tilings of the regular $2n$-gon). The second result is completely new to our knowledge, and uses Galashin's~\cite{Gal17} interpretation of plabic graphs as cross-sections of fine zonotopal tilings. An alternate proof for the case where the plabic graphs have a cyclic strand connectivity (corresponding to the totally positive Grassmannian $\Gr^{>0}(n,k)$, \cite{Pos06}) is given in the appendix, and uses the first result. The relationship between zonotopal tilings and plabic graphs is elaborated on further in Section~\ref{plabic}, and the result is proven in Section~\ref{cycles}. The last two results are corollaries of the second, with the fourth result proving a conjecture of Dylan Thurston~\cite[Conjecture 21]{ThuD17}, see Section~\ref{tcd}. %We do not have a result about the fundamental group for double wiring diagrams, but at first glance it appears cycles of lengths 4, 5, and 8 might suffice.

In the case $d=3$, the fine zonotopal tiling flip graph is generated by squares and decagons. The vertices of the decagons correspond to fine zonotopal tilings of $Z(5,3)$, one of which is shown in Figure~\ref{zonotopaltilipng}. The plabic graph cycles of length 5 and 10 occur in the cross-sections of the tilings as shown in Figure~\ref{plabicyclepng}.

\begin{figure}\centering
\includegraphics[width=0.8\textwidth]{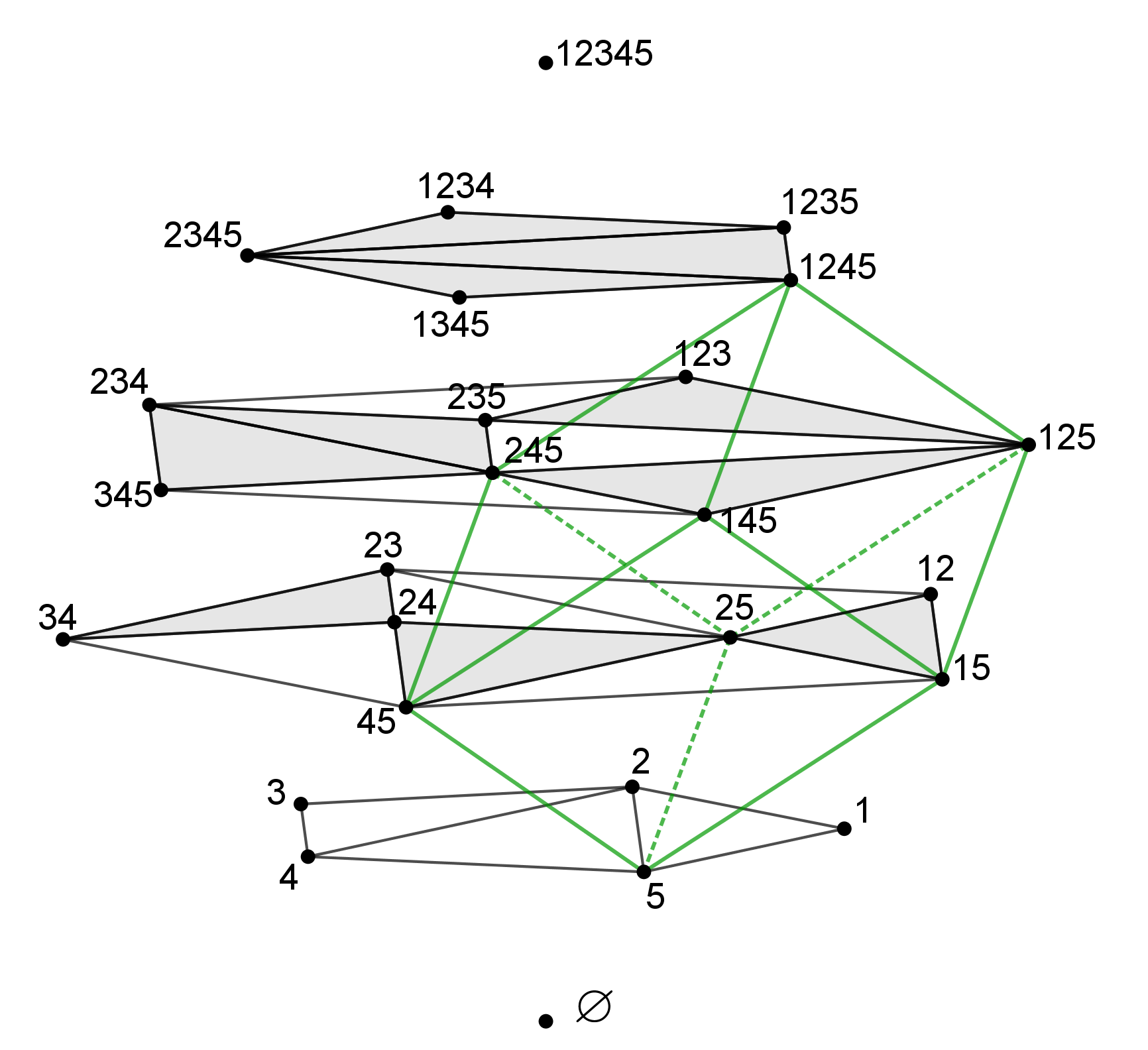}
\caption{A fine zonotopal tiling of $Z(5,3)$ with the tile $\tau_{(\{5\},\{3\})}$ highlighted in green. The four nontrivial cross-sections shown (namely, their bottom view) are planar duals to plabic graphs, and exhibit four of the five types of 2-cells in $\mb X_{\pi(n,k)}$ (see Theorem~\ref{plabicycles}). Redrawn from \cite[Figure~2]{Gal17}}
\label{zonotopaltilipng}
\end{figure}

The questions we discuss regard the relations between flips in our flip graphs. One can further ask about relations between relations, and so on. A natural way to pose this question formally involves the topology of the poset, whose minimal elements are our configurations, rank 1 elements are the flips, rank 2 elements are relations between flips, etc. A formal way to do this is known as the generalized Baues problem, which we discuss in Section~\ref{baues}. We prove there that the Baues poset of the Grassmannian graphs, conjectured~\cite{Pos18} to have the homotopy type of sphere,\footnote{While this paper was under review, this conjecture was confirmed~\cite{olarte2019} by Olarte and Santos.} is simply connected.

\section{Cycles for Zonotopal Tilings}\label{Z}

\begin{definition}\label{cyczonotope}
Let $v_1,\ldots,v_n \in \mb R^d$ be any collection of distinct vectors on the dimension $d$ \emph{moment curve} parameterized by $(1,t,t^2,\ldots,t^{d-1})$ (the indexing of the $v_i$ is increasing in $t$).
The \emph{cyclic zonotope} $Z(n,d)$ consists of all points which can be written as $\sum\limits_{i=1}^n c_iv_i$ for some $(c_i)_{i=1}^n \in [0,1]^n$, that is, $Z(n,d)$ is the Minkowski sum of the intervals $[0,v_i]$.
\end{definition} 

\begin{remark}
Sometimes cyclic zonotopes (and cyclic polytopes) are defined more generally, cf.~\cite[Definition~9.1]{Pos18}. Everything we prove could be related to those slightly more general objects as well.
\end{remark}

Following Galashin~\cite{Gal17}, we define tilings of the cyclic zonotope as collections of signed subsets. A pair $X = (X^+,X^-)$ of disjoint subsets of $[n]$ is called a signed subset of $[n]$, and we also define $X^0 := [n] \setminus (X^+ \sqcup X^-)$. Then the signed subsets are exactly the strings in $\{+,-,0\}^n$. For $X$ a signed subset, the tile $\tau_X$ consists of all points which can be written as $\sum\limits_{i\in X^+} v_i + \sum\limits_{j\in X^0} c_jv_j$ for some $(c_j)_{j\in X^0} \in [0,1]^{|X^0|}$. 
\begin{definition}\label{tiling}
A collection $\Delta$ of signed subsets of $[n]$ is called a \emph{fine zonotopal tiling} of $Z(n,d)$ provided that\begin{enumerate}
\item $Z(n,d)= \bigcup\limits_{X\in \Delta} \tau_X$,
\item Whenever $\tau_X \cap \tau_Y \neq \emptyset$ for $X,Y\in \Delta$, there exists $Z \in \Delta$ such that $\tau_X \cap \tau_Y = \tau_Z$ is a face of both $\tau_X$ and $\tau_Y$, and
\item For all $X \in \Delta$, we have $|X^0| \leq d$.
\end{enumerate}
\end{definition}

When the third condition fails, $\Delta$ is a zonotopal tiling but is not \emph{fine}. For any fine zonotopal tiling $\Delta$ and set $S \in \binom{[n]}{d}$ there a unique $X\in \Delta$ with $X^0 = S$ (cf.~\cite[item~(56)]{Shephard1974}). These tiles $\tau_X$ with $|X^0| = d$ are $d$-dimensional parallelotopes, and completely determine the tiling; the tiles $\tau_X$ with smaller $X_0$ sets are the lower-dimensional faces of the parallelotopes. Fine zonotopal tilings $\Delta$ of $Z(n,d)$ can be related to each other through a series of mutations. Geometrically, these mutations consist of finding a finely-tiled copy of $Z(d+1,d)$ inside $\Delta$, which has only two fine tilings, and flipping the way it is tiled. We will use the combinatorial definition in terms of signed subsets. Suppose that $S \in \binom{[n]}{d+1}$ has elements $i_1 < \cdots < i_{d+1}$. Then there exists a unique sequence of signed subsets $X_1,\ldots,X_{d+1} \in \Delta$ with $X_\ell^0 = S \setminus \{i_\ell\}$. Let $s_\ell = 1$ if $i_\ell \in X_\ell^+$, and $s_\ell = 0$ otherwise (when $i_\ell \in X_\ell^-$). Finally, let $S_\ell^+ = X_\ell^+ \setminus \{i_\ell\}$. We will use these definitions to check whether there is a tiled copy of $Z(S,d)$ inside $\Delta$ to flip.

%For $i \in S$, let $X_i$ be the unique signed subset in $\Delta$ for which $X_i^0 = S \setminus \{i\}$. Let $S_i^+ := X_i^+ \setminus S$ and $s_i := 1_{X_i^+}(i)$.

\begin{definition}\label{zonotopeflip}
A \emph{flip} at the set $S$ is available in a fine zonotopal tiling $\Delta$ if $S_i^+ = S_j^+$ for all $i,j \in [d+1]$ and $s_{\ell} \neq s_{\ell+1}$ for all $\ell \in [d]$. The result of a flip at the set $S$ is a new fine zonotopal tiling $\Delta'$, which can be formed from $\Delta$ by swapping each element $i_\ell \in S$ between $X_\ell^+$ and $X_\ell^-$, changing the values of all the $s_\ell$. The other $d$-dimensional parallelotopes are unchanged, and the smaller tiles are again the faces of these parallelotopes.
\end{definition}
%{\color{red}[Observation about which flips commute? (are unrelated)]} - todo
The \emph{flip graph} is the graph which has fine zonotopal tilings as vertices and edges connecting those tilings which are related by a single flip. It is a fact that any two fine zonotopal tilings of $Z(n,d)$ can be related by a series of flips, so the flip graph is connected, as we will see.

Henriques and Speyer (\cite[Proposition 3.14]{HenSpe10}) prove that the fundamental group of the flip graph of $Z(n,2)$ as a 1-complex is generated by 4-cycles and 8-cycles, where the 4-cycles correspond to pairs of commuting flips and the 8-cycles correspond to copies of $Z(4,2)$. In this section we generalize this result to any dimension using Ziegler's~\cite{Zie91} results on the \emph{higher Bruhat order}. Ziegler~\cite{Zie91} shows (with different language) that the flip graph for $Z(n,d)$ is isomorphic to the Hasse diagram for the higher Bruhat order graded poset $B(n,d)$. We will not bother to define the higher Bruhat order, rather, we will state the relevant results about it in the language of fine zonotopal tilings. Flips in zonotopal tilings correspond to covering relations in $B(n,d)$, and the functional $\phi$ used in \cite[Proposition 3.14]{HenSpe10} on tilings can be related to the rank function on $B(n,2)$.

\begin{theorem}[{\cite[Theorem 4.1]{Zie91}}]\label{bruhat}
The edges of the flip graph for $Z(n,d)$ form the Hasse diagram for a graded poset with unique minimal and maximal elements $\mn$ and $\mx$ at ranks $0$ and $\binom{n}{d+1}$. The set of minimal-length paths of flips between $\mn$ and $\mx$ modulo commutation of unrelated flips is in natural bijection with the elements of $Z(n,d+1)$, such that flips in tilings of $Z(n,d+1)$ swap the order in which $d+2$ flips occur in the corresponding path. 
\end{theorem}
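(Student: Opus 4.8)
The plan is to recast everything in terms of \emph{inversion sets}, the combinatorial skeleton of the higher Bruhat order, and then establish the three assertions of the statement in turn. To each fine zonotopal tiling $\Delta$ of $Z(n,d)$ one attaches its inversion set $U(\Delta)\subseteq\binom{[n]}{d+1}$: via the Bohne--Dress correspondence, $\Delta$ is the same datum as a one-element lifting of the rank-$d$ cyclic oriented matroid, and that lifting is pinned down, locally at each $(d+1)$-subset $S$, by a single bit which one declares to lie in $U(\Delta)$ or not; concretely these bits are read off from the tiles $X_i$ with $X_i^0 = S\setminus\{i\}$ appearing in Definition~\ref{zonotopeflip}. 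Under this dictionary the flip of $S$ from Definition~\ref{zonotopeflip} becomes the toggle $U\mapsto U\,\triangle\,\{S\}$, and the flip is available precisely when the toggled set is again \emph{consistent}, meaning that for every $(d+2)$-subset $T$ the $d+2$ subsets of $T$ of size $d+1$ occur in $U$ as an initial or a final segment of their lexicographic order within $T$ (one checks this matches the conditions in Definition~\ref{zonotopeflip}). Orienting every flip as the up-toggle $U\mapsto U\cup\{S\}$ and declaring $U\le U'$ whenever $U'$ is reachable from $U$ by up-toggles through consistent sets presents the flip graph as the Hasse diagram of the poset $B(n,d)$, with $|U|$ the candidate rank function since each cover changes the cardinality by exactly one.

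I would then pin down $\mn$ and $\mx$ and prove gradedness. The empty set and the full set $\binom{[n]}{d+1}$ are both consistent, corresponding to the lexicographic and antilexicographic tilings, and they are the unique elements admitting, respectively, no down-toggle and no up-toggle: a nonempty consistent $U$ always contains a removable $(d+1)$-set --- the lexicographically least one, say, whose deletion is easily checked to preserve consistency --- and dually for up-toggles. Since cardinality changes by one along covers, $\mx = \binom{[n]}{d+1}$ sits at rank $\binom{n}{d+1}$; the substantive remaining point for gradedness is that \emph{every} consistent $U\neq\binom{[n]}{d+1}$ admits an available up-toggle, so that no maximal chain gets stuck strictly below the top. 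I would prove this by induction on $n$ through the deletion map $\partial\colon B(n,d)\to B(n-1,d)$ that forgets the generator $v_n$: one shows $\partial$ is order-preserving and surjective and that every fiber $\partial^{-1}(U')$ is an interval, so an up-toggle can be found inside a fiber, falling back on the inductive hypothesis in $B(n-1,d)$ to push $U'$ upward once the current fiber has reached its own top. This fibration argument, together with the bookkeeping that certifies a chosen toggle preserves consistency at every relevant $(d+2)$-subset, is the step I expect to be the main obstacle: it is essentially the ``beneath/beyond'' packet analysis of the cyclic oriented matroid, and is where the bulk of the work sits.

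Finally I would read off the maximal chains. A maximal chain $\emptyset = U_0 \lessdot \cdots \lessdot U_N = \binom{[n]}{d+1}$ is precisely a linear order $\rho$ on $\binom{[n]}{d+1}$ all of whose initial segments are consistent, and interchanging two consecutive covers of the chain amounts exactly to transposing two adjacent entries $S,S'$ of $\rho$ that lie in no common $(d+2)$-set --- which is what ``commutation of unrelated flips'' means here. The content of the second half of the theorem is then the recursive structure of the higher Bruhat orders, which I would reprove in this language: to an equivalence class of such orders $\rho$ assign the set $U^{\sharp}(\rho)\subseteq\binom{[n]}{d+2}$ of those $(d+2)$-subsets $T$ whose $(d+1)$-element subsets occur along $\rho$ in antilexicographic order on $T$, and then verify that $U^{\sharp}(\rho)$ is a consistent $B(n,d+1)$-inversion set and that $\rho\mapsto U^{\sharp}(\rho)$ is a bijection from maximal chains modulo commutation onto $B(n,d+1)$ --- equivalently, onto the fine zonotopal tilings of $Z(n,d+1)$. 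The only rewriting of $\rho$ that is not a commutation --- reversing the relative order of the $d+2$ subsets contained in a fixed $(d+2)$-set $T$, which consistency forces to appear in lexicographic or antilexicographic order along $\rho$ --- is exactly the flip of $T$ in the associated tiling of $Z(n,d+1)$; this is the assertion that flips in $Z(n,d+1)$ ``swap the order in which $d+2$ flips occur in the corresponding path.'' Putting the three parts together yields the theorem.
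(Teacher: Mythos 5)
The paper does not actually prove Theorem~\ref{bruhat}: it cites it as \cite[Theorem~4.1]{Zie91} and uses it as a black box (``We will not bother to define the higher Bruhat order, rather, we will state the relevant results about it in the language of fine zonotopal tilings''). So there is no in-paper proof to compare against, but your proposal can still be assessed on its own merits.

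Your framework is the right one: encoding tilings via Manin--Schechtman inversion sets, reducing ``flip available'' to ``toggle preserves packet consistency,'' gradedness via a deletion map to $B(n-1,d)$, and identifying maximal chains modulo commutation with elements of $B(n,d+1)$. This is precisely the route Ziegler takes. However, one of your concrete claims is false and should be flagged. You write that from any nonempty consistent $U\subseteq\binom{[n]}{d+1}$ one may remove ``the lexicographically least one, whose deletion is easily checked to preserve consistency.'' This fails already for $d=1$, $n=4$: take $U=\{12,13\}$. One checks $U$ is consistent (in the packet $\{12,13,23\}$ it is the beginning segment $\{12,13\}$; in every other packet it is a beginning segment of size at most one). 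Deleting the lex-least element $12$ gives $\{13\}$, but inside the packet $\{12,13,23\}$ the singleton $\{13\}$ is the middle element, so it is neither a beginning nor an ending segment — the result is not consistent. The unique removable element here is $13$, not $12$. So the existence of an admissible down-toggle (and dually an admissible up-toggle, which is the crux of gradedness) cannot be dispatched by a one-line lex-order observation; it is exactly what the deletion/fibration machinery you allude to has to deliver, and that step deserves a full argument rather than a parenthetical. Apart from this, the remaining outline (in particular, the map $\rho\mapsto U^{\sharp}(\rho)$ and the dichotomy ``commutation vs.\ genuine packet reversal'') correctly reproduces the $B(n,d)\!\leftrightarrow\!B(n,d+1)$ correspondence, though surjectivity and injectivity of that map are also left unproved.
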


It follows from the above that $Z(d+2,d+1)$ has only two fine zonotopal tilings, so $Z(d+2,d)$ has only two paths from $\mn$ to $\mx$ up to commutation. There are also no pairs of commuting flips in tilings of $Z(d+2,d)$, so its flip graph must be a single $(2d+4)$-cycle. We are now ready to characterize the cycles in the flip graph for zonotopal tilings.

\begin{theorem}\label{zonotopecycles}
Let $\mb Z_{n,d}$ be the two-dimensional regular CW-complex formed by the flip graph for $Z(n,d)$ with the following 2-cells glued:

\begin{itemize}
\item quadrilaterals, wherever there is a cycle of length four corresponding to commuting pairs of flips;
\item $(2d+4)$-gons, wherever there is a cycle of length $(2d+4)$ whose vertices are all refinements of a particular zonotopal tiling which is fine except for a single signed subset which creates a tile isomorphic to $Z(d+2,d)$.
\end{itemize}

Then $\mb Z_{n,d}$ is simply connected.
\end{theorem}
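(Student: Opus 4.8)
The plan is to induct on $n$, using Theorem~\ref{bruhat} to reduce statements about loops in the flip graph of $Z(n,d)$ to statements about loops in flip graphs of lower-complexity objects. The key structural input is that Theorem~\ref{bruhat} identifies the flip graph with the Hasse diagram of the graded poset $B(n,d)$, with a rank function $\rho$ taking values in $\{0,1,\ldots,\binom{n}{d+1}\}$; this rank function is exactly the analogue of the functional $\phi$ from \cite[Proposition~3.14]{HenSpe10}, and an up-flip increases $\rho$ by $1$ while a down-flip decreases it by $1$.

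First I would show that every loop in the flip graph is homotopic in $\mb Z_{n,d}$ to a loop based at $\mn$ (connectivity of the flip graph, which follows from Theorem~\ref{bruhat}, lets us conjugate any loop to a based one; then I push the basepoint down to $\mn$ along a path). Next, given a based loop $\gamma$ at $\mn$, I would use the quadrilateral 2-cells to argue that $\gamma$ is homotopic to a concatenation of loops, each of which first takes a maximal-length ascending path (a sequence of up-flips) and then a maximal-length descending path back to $\mn$ — i.e., each factor is a ``monotone'' loop determined by a pair of saturated chains $\mn \to \mx$ in $B(n,d)$. This ``straightening'' step is the standard move in this genre of argument: whenever an up-flip is immediately followed by a down-flip on a different, commuting set, the quadrilateral lets me swap them and thereby push all the up-flips to the front; whenever an up-flip is followed by the inverse down-flip on the same set, that is a backtrack and cancels. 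Since the rank is bounded, this process terminates.

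The heart of the argument is then to show that any two saturated chains from $\mn$ to $\mx$ in $B(n,d)$ are connected by a sequence of local moves each of which bounds a disk in $\mb Z_{n,d}$. Here Theorem~\ref{bruhat} is decisive: saturated chains modulo commutation of unrelated flips are in bijection with the elements of $Z(n,d+1)$ — i.e., with fine zonotopal tilings of $Z(n,d+1)$ — and moving between two such tilings by a single flip corresponds to swapping the order of $d+2$ consecutive flips in the chain, which is precisely the boundary of a $(2d+4)$-gon 2-cell (the copy of $Z(d+2,d)$). Commutations of unrelated flips are the quadrilateral 2-cells. So: (i) any two saturated chains with the same image in $Z(n,d+1)$ differ by quadrilateral moves; (ii) any two tilings of $Z(n,d+1)$ are connected by flips, each of which is realized by a $(2d+4)$-gon; hence any two saturated chains are connected by boundaries of 2-cells of $\mb Z_{n,d}$. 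This shows every monotone loop is null-homotopic, and combined with the straightening step, every loop is null-homotopic.

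The main obstacle I anticipate is the straightening/cancellation step: making precise that an arbitrary based loop can be reduced to a product of monotone loops using only quadrilateral 2-cells and backtrack cancellations, and verifying that the induction is not actually needed here because Theorem~\ref{bruhat} already packages the connectivity of the ``next level'' ($Z(n,d+1)$). One has to be careful that when two flips on sets $S, S'$ appear consecutively in a word and can be commuted, the relevant 4-cycle really is one of the glued quadrilaterals — i.e., that commuting flips in $Z(n,d)$ genuinely commute as operations and that the resulting square has the claimed combinatorial description from Definition~\ref{zonotopeflip}; this is where a short lemma identifying ``commuting pair of flips'' with ``unrelated covering relations in $B(n,d)$'' is needed. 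A secondary subtlety is the base case and the degenerate cases $n \le d+1$ (where the flip graph is a point, an edge, or a single $(2d+4)$-cycle which is filled by its own 2-cell), which must be checked to anchor the argument.
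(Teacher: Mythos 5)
The second half of your argument — matching two saturated chains from $\mn$ to $\mx$ via the bijection of Theorem~\ref{bruhat} with fine tilings of $Z(n,d+1)$, with commutations realized by quadrilaterals and single $Z(n,d+1)$-flips realized by $(2d+4)$-gons — is exactly the paper's argument, and is correct.

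The gap is in the ``straightening'' step, and it is the one you yourself flagged. You propose to homotope an arbitrary based loop $\gamma$ at $\mn$ into a product of monotone up-then-down loops by using quadrilaterals to swap down-flip/up-flip pairs and cancelling backtracks. Consider an interior local minimum $\Delta$ of $\gamma$ at positive rank: the two edges of $\gamma$ incident to $\Delta$ are both covering relations $\Delta \lessdot \Delta'$, $\Delta \lessdot \Delta''$, i.e.\ both are up-flips available at $\Delta$, say at index sets $S$ and $S'$. You need $S = S'$ (a backtrack, which cancels) or $S, S'$ a commuting pair (so that there is a glued quadrilateral). But two up-flips available at the same tiling need not commute; when the corresponding copies of $Z(d+1,d)$ share tiles, performing one flip can destroy the availability of the other, and then there is no quadrilateral to move across. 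The assertion that ``the rank is bounded so this process terminates'' does not help: the process may be unable to take a step at such a vertex. Resolving a non-commuting local minimum would require a $(2d+4)$-gon, which changes $d+2$ flips at once and is not a simple ``swap''; it is not clear from your sketch how to organize this into a terminating rewriting procedure.

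The paper's proof reaches the same monotone reduction by a different mechanism that sidesteps this obstruction. For every vertex $\Delta_i$ of $\gamma$ it fixes a monotone path $\delta_i^+$ from $\mn$ to $\Delta_i$ (and a monotone path $\delta_i^-$ from $\Delta_i$ to $\mx$), and then decomposes the class of $\gamma$ as a product of the small loops of the form $\delta_i^+ S_i (\delta_{i+1}^+)^{-1}$, one for each edge $S_i$ of $\gamma$. Each such small loop is already ``monotone up, one edge, monotone down'', and by inserting $\delta_i^- (\delta_i^-)^{-1}$ at $\Delta_i$ it extends to a length-$2\binom{n}{d+1}$ loop through both $\mn$ and $\mx$, which is then contracted by the saturated-chain argument. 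In other words: don't try to straighten $\gamma$ in place — decompose $[\gamma]$ into factors that are automatically monotone by construction. If you replace your straightening step with this path-insertion decomposition, the rest of your argument goes through.
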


\begin{proof}
We will use a technique similar to the proof in~\cite[Proposition 3.14]{HenSpe10}, and use results about the higher Bruhat order as a black box to generalize to higher dimensions.

Let $\gamma = S_1S_2\cdots S_m$, where each $S_i$ is a flip which turns tiling $\Delta_i$ into $\Delta_{i+1}$ and $\Delta_1 = \Delta_{m+1}$, be a loop in the flip graph for $Z(n,d)$ which connects the tilings $\Delta_1,\Delta_2,\ldots,\Delta_{m+1} = \Delta_1$. It suffices to show that $\gamma$ can be continuously deformed to a point in $\mb Z_{n,d}$. All we know is that the squares and the cycles corresponding to the $(2d+4)$-gon from copies of $Z(d+2,d)$ are nullhomotopic, so our only tool is to replace paths in $\gamma$ with their complement in a square or $(2d+4)$-gon.

First suppose that $\gamma$ is a loop of length $2\binom{n}{d+1}$ that includes $\mn$ and $\mx$. %Ziegler~\cite[Theorem 4.1, (G)]{Ziegler91} showed that $B(n,k)$ is a graded poset with unique minimal and maximal elements and maximal chains of length $\binom{n}{k+1}$, which is what motivates the previous statement. Elements of $B(n,k)$ are labeled by their set of inversions, which correspond to the collection of flip sets $S\in \binom{[n]}{k+1}$ which need to be performed to reach the minimal element of the poset. Now, Ziegler~\cite[Theorem 4.1 (A)]{Ziegler91} also showed in a slightly different language that maximal chains in $B(n,k)$ modulo commuting flips (covering {\color{red} [why?]} relations) are in natural bijection with $B(n,k+1)$ {\color{red} [I would formulate everything that you need about zonotopal tiling separately not inside this proof]} . Flips in $B(n,k+1)$ result in $k+2$ flips in $B(n,k)$ switching the order in which they were performed. For $Z(k+2,k)$, there is exactly one flip in $Z(k+2,k+1)$, which confirms the implicit claim that the flip graph is a $(2k+4)$-gon {\color{red} [should also say that there are no commuting flips in $Z(k+2,k)$]}. 
Since $\gamma$ connects the minimal and maximal elements twice in the shortest possible time, it can be divided into two parts, $\alpha$ and $\beta$, each of which is a series of monotonic in terms of rank flips in $Z(n,d)$. Then by Theorem~\ref{bruhat}, $\alpha$ and $\beta$ are each representative elements of some equivalence classes of paths between $\mn$ and $\mx$ given by fine zonotopal tilings $A$ and $B$ of $Z(n,d+1)$, respectively. The flip graph for $Z(n,d+1)$ is connected, so there exists a sequence of flips to transform $A$ into $B$. Along the way, commutation of flips in $\alpha$ is required to get the right representative element of $A$, to allow the flips in $Z(n,d+1)$ to be realized as $(2d+4)$-gons in $\mb Z_{n,d}$. The flips in $Z(n,d+1)$ involve $d+2$ tiles in a copy of $Z(d+2,d+1)$, which appear as $d+2$ flips in $\alpha$, all inside a copy of $Z(d+2,d)$. Therefore commutation of flips moves $\alpha$ over a quadrilateral, while flips of $A$ involve moves $\alpha$ over a $(2d+4)$-gon. At each step, a continuous deformation of $\alpha$ occurs, eventually transforming it to $\beta$, at which point $\gamma$ is trivial because it is $\beta\beta^{-1}$.

Now suppose $\gamma$ is any arbitrary loop as before. Then for each vertex $\Delta_i$ in $\gamma$, draw a path $\delta_i$ of length $\binom{n}{d+1}$ between $\mn$ and $\mx$ which goes though $\Delta_i$, using Theorem~\ref{bruhat}. Let's say that $\delta_i = \delta_i^-\delta_i^+$, where $\delta_i^+$ connects $\mn$ to $\Delta_i$, %{\color{red}[maybe the other way around? I'm confused with your convention on orientations and rules of multiplication]}
and then $\delta_i^-$ connects $\Delta_i$ to $\mx$, both in the shortest possible time. Suppose that the loops $S_i\delta_{i+1}^+(\delta_i^+)^{-1}$ %{\color{red}[so here I imagine that first $S_i$ goes along the flip edge, then we go up, then we go down. But then $\delta_i^+$ connects $\Delta_i$ to $\mx$]}
are all deformable to point. Then after a continuous deformation we could compute to conclude the result (the brackets denote the homotopy class of a curve with fixed endpoints):
\[[\gamma] = \prod\limits_{i=1}^m [\delta_{i}^+][\delta_{i+1}^+]^{-1} = [\delta_1^+]\left(\prod\limits_{i=2}^m [\delta_{i}^+]^{-1}[\delta_i^+]\right)[\delta_{m+1}^+]^{-1} = [\delta_1^+][\delta_{m+1}^+]^{-1} = 1.\]
%{\color{red}[Since you work with loops and paths up to homotopy (=continuous deformation), it'd be better to write them in brackets, as in $[\gamma]$, so that they denote equivalence classes. Otherwise your equalities should be understood up to homotopy]}
Each flip $S_i$ is either an upward flip or a downward flip, depending on whether $\Delta_{i+1}$ has a higher or lower rank than $\Delta_i$ when seen in the higher Bruhat order. If it is an upward flip, then $\delta_i^-(\delta_i^-)^{-1}S_i\delta_{i+1}^+(\delta_i^+)^{-1}$ is a cycle of minimal length which includes $\mn$ and $\mx$, and so is trivial in $\mb Z_{n,d}$ as we showed in the previous paragraph. If it is a downward flip, then $S_i(\delta_i^-)^{-1}\delta_i^-\delta_{i+1}^+(\delta_i^+)^{-1}$ %{\color{red}[this one looks wrong. also I don't see the point in distinguishing those two cases]} 
is similarly trivial. In either case, the new loop is certainly homotopic to $S_i\delta_{i+1}^+(\delta_i^+)^{-1}$, completing the proof.
\end{proof}

This result will be crucial for an alternate proof of our result for plabic graphs, which is described in the appendix. 

\section{Plabic Graphs in Zonotopal Tilings}\label{plabic}

Let $G$ be an embedding of a planar graph in a disk with each vertex colored white or black (adjacent vertices need not be different colors). Also add $n$ black \emph{boundary vertices} $b_1,\ldots,b_n$ in clockwise order outside of the disk, each with a single edge to one of the vertices of $G$. This configuration is called a \emph{plabic graph} and we refer to it by $G$ (see Figure~\ref{plabicgraphpng}).

\begin{definition}\label{strand}
A \emph{strand} $s_i$ in a plabic graph $G$ is a path which starts at $b_i$, and proceeds along the edges of $G$ until it reaches some boundary vertex $b_j$, according to the \emph{rules of the road}; when $s_i$ reaches a white (resp. black) vertex $v$ through edge $e$, it makes a \emph{sharp} left (resp. right) turn. That is, if the edges of $v$ are shown in a circle, then $s_i$ should traverse the next edge clockwise (resp. counterclockwise) of $e$. The \emph{strand permutation} (or the \emph{connectivity}) of $G$ is the permutation $\pi_G \in S_n$ such that if $s_i$ ends at $b_j$ then $\pi_G(i) = j$.
\end{definition}

We will only deal with a special class of plabic graph. %Say that two strands have an \emph{essential} crossing when they both traverse the same edge -- why?
A \emph{bad double crossing} is when two distinct strands both traverse edge an $e_1$ and later traverse another edge $e_2$.

\begin{definition}[cf. {\cite[Theorem 13.2]{Pos06}}]\label{Reduced}
A plabic graph $G$ is \emph{reduced} if and only if \begin{itemize}
\item For any edge $e$ between non-boundary vertices, exactly two distinct strands $s_i$ and $s_j$ traverse $e$.
\item $G$ does not contain any bad double crossings.
\item When $\pi_G(i) = i$, the vertex $b_i$ is connected to a single isolated vertex of $G$.
\end{itemize}
\end{definition}

\begin{figure}[ht]\centering
\includegraphics[width=0.60\textwidth]{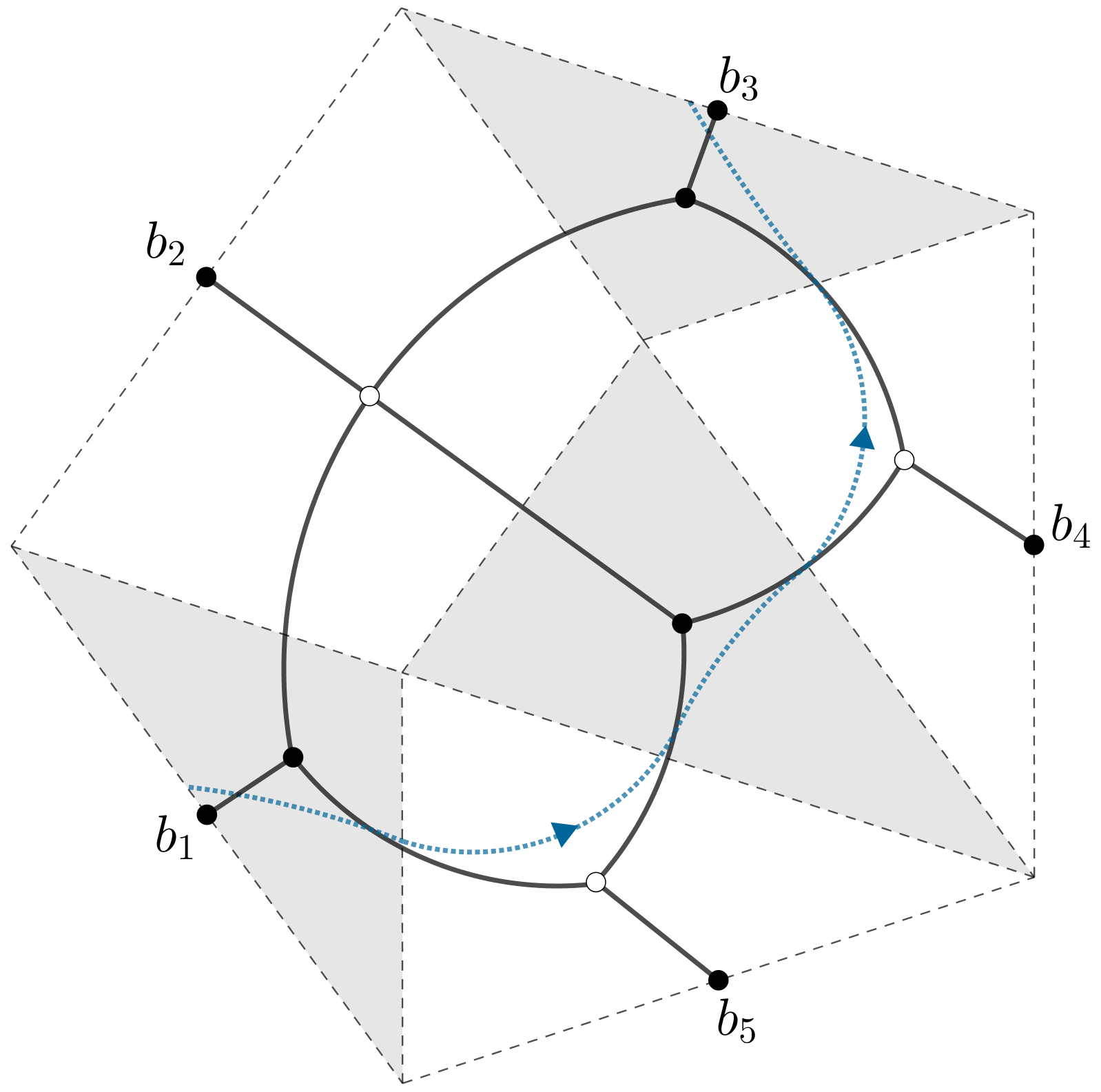}
\caption{A plabic graph with connectivity $\pi{(5,2)}$. The strand $s_1$ is also shown
%The five strands and the graph's planar dual are also drawn. See \url{http://math.mit.edu/~galashin/plabic.html}
}
\label{plabicgraphpng}
\end{figure}

\noindent We will only be considering reduced plabic graphs, and so will often omit the word ``reduced''.

The \emph{decorated strand permutation} $\pi_G^:$, for $G$ a reduced plabic graph, is identical to $\pi_G$ except that the fixed points of $\pi_G$ are \emph{decorated} (black) if the single isolated vertex they are connected to is black, otherwise they are \emph{undecorated} (white).

Postnikov described how the \emph{boundary measurements} for reduced plabic graphs parameterize certain ``positroid cells'' in the totally non-negative Grassmannian $\Gr^{\geq 0}(n,k)$~\cite[Thm. 12.7]{Pos06}. The positroid cell parameterized by a plabic graph depends only on its decorated strand permutation. The cyclic permutation which sends $i$ to $i + k$ (modulo $n$) corresponds to the top cell of $\Gr^{\geq 0}(n,k)$, which is the totally positive Grassmannian $\Gr^{>0}(n,k)$ and so is of special interest; we refer to this cyclic permutation by $\pi(n,k)$.

When Postnikov~\cite{Pos06} introduced plabic graphs, he gave some moves to relate them (they correspond to certain simple reparametrizations of the positroid cell). One can check that the moves in Figure~\ref{plabicmovespng} preserve the strand connectivity and whether the plabic graph is reduced. Plabic graphs where all vertices have degree three are called \emph{trivalent}, and $(M1),(M3)$ are called trivalent moves. Through uncontraction moves, any plabic graph can be made trivalent. If all possible contraction moves are performed, the resulting graph will be bipartite.

\begin{theorem}[Postnikov~\cite{Pos06}]\label{connected}
Any two reduced trivalent plabic graphs with the same connectivity can be related by a sequence of the moves (M1), (M2), and (M3) in Figure~\ref{plabicmovespng}.
\end{theorem}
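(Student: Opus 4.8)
The plan is to derive the statement from the connectivity of the flip graph of fine zonotopal tilings of $Z(n,3)$ by means of the cross-section correspondence of Galashin~\cite{Gal17}, which is set up in the remainder of this section; since the theorem is Postnikov's, I will also indicate how his self-contained combinatorial argument runs.

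The zonotopal route has three steps. \textbf{Step 1 (lift).} Show, following~\cite{Gal17}, that every reduced trivalent plabic graph $G$ with strand permutation $\pi$ arises, up to contraction and uncontraction of degree-two vertices, as the planar dual of the cross-section at some generic height of a fine zonotopal tiling $\Delta_G$ of $Z(n,3)$, and that conversely the generic cross-section of any such tiling is a reduced plabic graph; by the structure of the correspondence the slicing height depends only on $\pi$, so lifts $\Delta_1$ of $G_1$ and $\Delta_2$ of $G_2$ may be chosen with the same slicing height. \textbf{Step 2 (connect upstairs).} Any two fine zonotopal tilings of $Z(n,3)$ are joined by a sequence of flips --- the fact noted just below Definition~\ref{zonotopeflip}, which follows from Theorem~\ref{bruhat} since the flip graph of $Z(n,3)$ is the Hasse diagram of a graded poset with unique minimum and maximum and is therefore connected. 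Fix such a sequence $\Delta_1 = E_0, E_1, \dots, E_m = \Delta_2$, each step regluing a copy of $Z(4,3)$. \textbf{Step 3 (push down).} Verify that slicing at the fixed height sends each flip to a move: a $Z(4,3)$-flip that misses the slice leaves the dual graph unchanged, while one that meets the slice alters the dual graph by exactly one of the local pictures (M1), (M2), (M3), possibly flanked by contraction/uncontraction moves to keep the intermediate graphs trivalent. Concatenating, the sequence $(E_t)$ descends to a sequence of moves (M1)--(M3) carrying $G_1$ to $G_2$.

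The crux is Step 3, together with two pieces of bookkeeping. One must enumerate the positions of a $Z(4,3)$-flip relative to the slicing hyperplane and match each to a composite of the allowed moves; and since a reduced plabic graph has many zonotopal lifts and a tiling many generic heights, one must also argue that the resulting sequence of moves is independent of these choices, or that changing the lift or the height is itself realized by moves. The decorated fixed points of $\pi$ --- the isolated boundary components forced by the third condition in Definition~\ref{Reduced} --- must be tracked through the correspondence as well; strand connectivity is then automatically preserved along the way because (M1)--(M3) preserve it.

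The self-contained alternative, which is Postnikov's proof in~\cite{Pos06}, avoids zonotopes entirely. Using the moves, one reduces each $G_i$ to a normal form built from a reduced word attached to $\pi$ --- of the kind appearing in double wiring diagrams --- and then invokes the Tits--Matsumoto theorem that any two reduced words of a permutation are linked by braid relations, each braid relation corresponding to a square or trivalent move. Here the main work is the normal-form analysis and the treatment of non-reduced and fixed-point configurations, again governed by Definition~\ref{Reduced}.
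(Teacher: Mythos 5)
The paper itself gives no proof of this theorem; it is imported verbatim from Postnikov~\cite{Pos06}, so there is no ``paper's proof'' to compare against. Still, your zonotopal route has a genuine gap in Step~1. Galashin's correspondence (Theorem~\ref{galashin}) only identifies \emph{full} cross-sections of fine tilings of $Z(n,3)$ with trivalent reduced plabic graphs of \emph{cyclic} connectivity $\pi(n,k)$. A graph with an arbitrary decorated permutation $\pi^:$ does not arise as the entire planar dual of any $\Sigma_k$; by Theorem~\ref{weak} it only embeds as a \emph{subgraph} of such a cross-section, bounded by the Grassmann necklace $\phi^{-1}(\pi^:)$. With that correction, Step~2 gives a flip sequence upstairs joining two tilings whose $k$-th sections contain $G_1$ and $G_2$, but there is no reason the induced moves in $G_k$ should stay inside the subregion bounded by the necklace, and hence no reason the intermediate subgraphs should even have connectivity $\pi^:$, let alone be well-defined plabic graphs. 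This is exactly the obstruction the authors flag at the end of the Appendix: the zonotopal method works cleanly for $\pi(n,k)$ but ``cannot be straightforwardly applied'' to general $\pi^:$. So as written, your three-step argument proves the theorem only for cyclic connectivity.

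Your second paragraph is closer to what Postnikov actually does, but it is more a gesture than a proof, and the framing is slightly off: the argument in~\cite[\S13--14]{Pos06} reduces an arbitrary reduced plabic graph, via the moves, to a canonical form indexed by a Le-diagram (equivalently, the decorated permutation), and then uses uniqueness of that normal form; Matsumoto/Tits enters only in the special wiring-diagram subcase, not as the engine of the general proof. If you want a self-contained argument, you need to supply that normal-form analysis; if you want to keep the zonotopal route, you must first explain how to control the moves upstairs so that they respect the necklace boundary (e.g., by fixing the complement of the necklace region and proving that moves inside the region suffice), which is essentially a new lemma and not the bookkeeping you describe.
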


A slightly more general theorem on the flip-connectedness of partly frozen plabic graphs was proven in~\cite{OhSpe14}.

We will primarily deal with trivalent plabic graphs and so only use (M1)--(M3), but the contraction/uncontraction moves will be relevant for triple crossing diagrams.

\begin{figure}[ht]\centering
\includegraphics[width=0.85\textwidth]{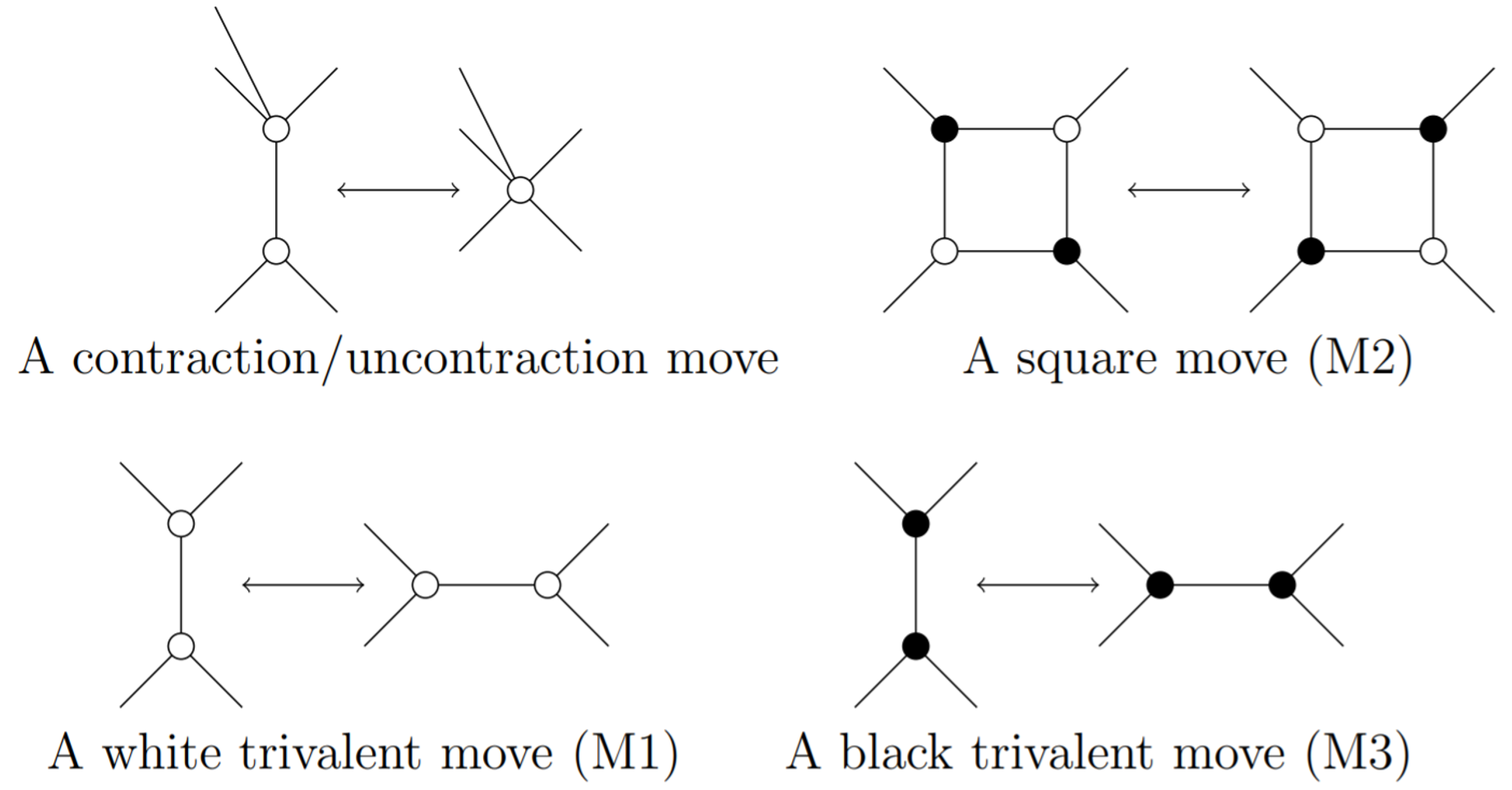}
\caption{Moves in plabic graphs, from Galashin~\cite{Gal17}}
\label{plabicmovespng}
\end{figure}

Pavel Galashin~\cite{Gal17} shows that the $k$-th cross-section, $1 \le k \le n-1$, of fine tilings of the three dimensional cyclic zonotope correspond to trivalent reduced plabic graphs with connectivity $\pi(n,k)$. Let $\Delta$ be a fine zonotopal tiling of $Z(n,3)$. The cross-section $\Sigma_k$ of $\Delta$ with the plane $\{(k,x_2,x_3) \in \mathbb{R}^3\}$ is a triangulation of an $n$-gon, possibly with some interior vertices. We call it a \emph{plabic triangulation}. The vertices of $\Sigma_k$ are labeled by strings in $\{+,-\}^n$ with exactly $k$ `$+$' symbols, or equivalently, by elements of $\binom{[n]}{k}$. For any triangle in $\Sigma_k$, either the union of the labels of the vertices has $k+1$ elements, or the intersection of the labels of the vertices has $k-1$ elements, depending on the location of the triangle has a cross-section of a single parallelepiped tile. In the first case consider the triangle to be \emph{black}, in the second case consider it \emph{white}. Let $G_k$ be the planar dual to the triangulation $\Sigma_k$, and color the vertices of $G_k$ according to the color of the triangle to which it belonged.

\begin{theorem}[Galashin~\cite{Gal17}]\label{galashin}
$G_k$ is a trivalent reduced plabic graph with strand connectivity $\pi(n,k)$. Further, for any trivalent reduced plabic graph $G$ with strand connectivity $\pi(n,k)$, there exists a fine zonotopal tiling of $Z(n,3)$ for which $G_k = G$.
\end{theorem}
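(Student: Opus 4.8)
The plan is to prove the two assertions of Theorem~\ref{galashin} separately: that every cross-section $G_k$ is a reduced trivalent plabic graph with connectivity $\pi(n,k)$, and that the assignment $\Delta\mapsto G_k(\Delta)$ is surjective onto such graphs. For the forward direction I would first turn Definition~\ref{tiling} into an explicit model of $\Sigma_k$. Since the generators lie on the moment curve $(1,t,t^2)$, the first coordinate on $\tau_X$ ranges over $[\,|X^+|,\,|X^+|+|X^0|\,]$, so: a $0$-dimensional tile $\tau_X$ lies in $\{x_1=k\}$ exactly when $|X^+|=k$ (these become the vertices of $\Sigma_k$, labelled by $\binom{[n]}{k}$); a rhombus tile with $X^0=\{a,b\}$ meets the plane in an edge exactly when $|X^+|=k-1$, joining the $k$-sets $X^+\cup\{a\}$ and $X^+\cup\{b\}$; and a parallelepiped with $X^0=\{a,b,c\}$ meets it in a triangle exactly when $|X^+|\in\{k-1,k-2\}$, giving a white triangle on $\{X^+\cup\{a\},X^+\cup\{b\},X^+\cup\{c\}\}$ in the first case (the labels meet in $X^+$, of size $k-1$) and a black triangle on $\{X^+\cup\{a,b\},X^+\cup\{a,c\},X^+\cup\{b,c\}\}$ in the second (their union has size $k+1$). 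Since $Z(n,3)$ is convex, $\Sigma_k$ triangulates the convex polygon $Z(n,3)\cap\{x_1=k\}$, so its planar dual $G_k$ is a vertex-bicolored trivalent graph in a disk; and, from a short computation with the facets of $Z(n,3)$, a pair $\{a,b\}$ supports a boundary facet whose level-$k$ slice is an edge precisely when $b-a\in\{k,n-k\}$, and one checks these $n$ slices chain the cyclic-interval $k$-sets $I_j:=\{j,j+1,\dots,j+k-1\}$, $j\in\mathbb{Z}/n$, consecutively into an $n$-gon — furnishing the $n$ boundary vertices $b_1,\dots,b_n$ of $G_k$ in clockwise order.

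The heart of the forward direction is to see strands geometrically. One clean route is the Bohne--Dress theorem, which presents $\Delta$ by an arrangement of $n$ pseudo-planes inside a ball, the $i$-th cutting $\Delta$ along the wall $W_i:=\{\tau_Y\in\Delta:i\in Y^0\}$. The plane $\{x_1=k\}$ is transverse to this arrangement — within $\Delta$ the rhombi with $0$-set $\{c,d\}$, which trace out $W_c\cap W_d$, spread over several consecutive levels, so no such intersection is confined to one level — so slicing by $\{x_1=k\}$ produces a genuine arrangement of $n$ pseudolines in the disk $\Sigma_k$, and a short local check of the rules of the road of Definition~\ref{strand} identifies the $i$-th pseudoline with the strand $s_i$ of $G_k$ (at a white, respectively black, triangle the $+/-$ labels of the two directions other than $i$ force the prescribed sharp left, respectively right, turn). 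Three things follow: every interior edge of $G_k$ is dual to a rhombus-slice with $0$-set $\{c,d\}$ and is crossed by exactly the two strands $s_c$ and $s_d$, so the first requirement of Definition~\ref{Reduced} holds; two strands cross at most once, since two pseudolines in an arrangement do, so there are no bad double crossings; and the fixed-point requirement is vacuous for $0<k<n$, so $G_k$ is reduced. Finally, matching pseudoline endpoints to the boundary $n$-gon — the boundary edge between $I_j$ and $I_{j+1}$ being crossed by $s_j$ and $s_{j+k}$ — gives $\pi_{G_k}(i)=i+k\pmod n$, i.e.\ $\pi_{G_k}=\pi(n,k)$.

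For the converse I would argue by connectedness of the two flip graphs. The image of $\Delta\mapsto G_k(\Delta)$ is nonempty, so by Theorem~\ref{connected} it suffices to prove that image is closed under the moves (M1)--(M3) of Figure~\ref{plabicmovespng}: given $G=G_k(\Delta)$ and a move $G\to G'$, one must exhibit a set $S\in\binom{[n]}{4}$ whose flip is available in $\Delta$ and whose effect on the level-$k$ slice is exactly that move, so that the flipped tiling $\Delta'$ satisfies $G_k(\Delta')=G'$. The local input is that each plabic move re-triangulates a bounded cluster of triangles of $\Sigma_k$; these triangles are the level-$k$ slices of a matching cluster of parallelepipeds of $\Delta$; and the structure of a fine tiling forces those parallelepipeds, together with their neighbors in the missing fourth direction, to assemble into a tiled copy of $Z(4,3)$ in one of its two fine tilings, whose flip realizes the move (while harmlessly also changing $\Sigma_{k\pm1}$).

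I expect the real work to lie in this lifting step of the converse. Unlike $\Sigma_k$, the ambient tiling $\Delta$ is very far from being pinned down by $G_k$, so one needs a genuine local-to-global argument that a plabic move always has enough room to be lifted to a flip of an embedded $Z(4,3)$, however $\Delta$ extends the configuration visible in $\Sigma_k$. By contrast, once the strands are exhibited as a sliced pseudo-plane arrangement, the reducedness and the connectivity computation in the forward direction become comparatively routine.
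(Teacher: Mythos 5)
The theorem in question is quoted from Galashin~\cite{Gal17} without proof in the paper, so there is no internal argument to compare against; I will evaluate the proposal on its own terms.

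For the forward direction, your explicit bookkeeping with $|X^+|$ and $|X^0|$ to identify the vertices, edges, white triangles, and black triangles of $\Sigma_k$ is correct, and the route via Bohne--Dress to exhibit the strands of $G_k$ as a sliced pseudoline arrangement is plausible. The ``short local check'' matching the pseudolines to the rules of the road in Definition~\ref{strand} is the real content of that step and would need to be carried out, but there is no structural obstruction.

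The converse direction has a genuine gap, and it lies exactly where you suspect. You claim that each plabic move lifts to a flip of a \emph{single} embedded $Z(4,3)$, asserting that ``the structure of a fine tiling forces those parallelepipeds, together with their neighbors in the missing fourth direction, to assemble into a tiled copy of $Z(4,3)$.'' This is false for the trivalent moves (M1), (M3). Take a white trivalent move in $G_k$: the two white triangles come from two parallelepipeds $P_1, P_2$ of $\Delta$ with $|X^+|=k-1$ and $0$-sets of the form $\{a,c,d\}$ and $\{b,c,d\}$. In a fine tiling there is exactly one tile with each $3$-element $0$-set, so the candidate completing parallelepipeds with $0$-sets $\{a,b,c\}$ and $\{a,b,d\}$ do exist uniquely in $\Delta$, but nothing forces their $X^+$-values to agree with those of $P_1, P_2$ off $\{a,b,c,d\}$, which is required for the flip of Definition~\ref{zonotopeflip} to be available. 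In general one must first perform a sequence of ``setup'' flips that reshape the layers above (or below) $k$ before the desired flip exists; this is precisely what Lemma~\ref{cleanmove} in the appendix establishes, by an induction on $k$, and it is not a routine local check. Your overall surjectivity-via-flip-connectedness strategy can be salvaged with this corrected lifting (one move $\mapsto$ a sequence of flips leaving $G_k$ untouched until the last), but the one-flip claim as written does not hold. A cleaner converse avoids lifting moves entirely: start from $\Sigma_k$ and build $\Delta$ layer by layer outward, using the fact that $\Sigma_{j\pm1}$ is determined by $\Sigma_j$ up to triangulation of monochromatic regions (cf.\ Lemma~\ref{compatible}) and choosing those triangulations arbitrarily; this is closer to how Galashin proves surjectivity.
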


If we erase all the edges in $\Sigma_k$ between the regions of the same color, we get what is called a \emph{plabic tiling}. The planar dual of a plabic tiling is a bipartite plabic graph. The vertices of $\Sigma_k$ appear in the faces of $G_k$, and so we will refer to their labels as the \emph{face labels} of $G_k$.

We would like to see how these trivalent plabic moves relate to the three-dimensional cyclic zonotopal flips. Galashin~\cite{Gal17} observed that a zonotopal flip at height $k$ performs a square move in $G_k$, a white trivalent move in $G_{k-1}$, and a black trivalent move in $G_{k+1}$.

\begin{lemma}\label{squareflips}
For any zonotopal tiling $\Delta$ of $Z(n,3)$, the available flips are in bijective correspondence with the available square moves in the plabic graphs $\{G_k\}_{k=1}^n$.
\end{lemma}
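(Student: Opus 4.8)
The plan is to establish the bijection locally, using the explicit combinatorial description of flips from Definition~\ref{zonotopeflip} together with Galashin's dictionary between cross-sections and plabic graphs (Theorems~\ref{galashin}). A flip of $\Delta$ is specified by a set $S \in \binom{[n]}{d+1} = \binom{[n]}{4}$ (here $d=3$) satisfying the conditions $S_i^+ = S_j^+$ for all $i,j \in S$ and $s_{i_\ell} \neq s_{i_{\ell+1}}$ for all $\ell$. Writing $S = \{i_1 < i_2 < i_3 < i_4\}$ and $T := S_i^+$ (the common value), the signed subsets $X_{i_1}, \dots, X_{i_4}$ involved in the flip all have $X_i^0 = S \setminus \{i\}$, and their $\pm$-patterns on $S$ alternate; so exactly two of them, say those with $|X^+ \cap S| = m$ and the ``middle'' value, contribute to the cross-section at a given height. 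First I would compute, for each height $k$, which of the four tiles $\tau_{X_{i_j}}$ and which of the lower-dimensional faces created by the flip actually meet the plane $\{x_1 = k\}$: since the $s_{i_j}$ alternate, $|X_{i_j}^+| = |T| + s_{i_j}$ takes exactly the two values $|T|$ and $|T|+1$, so the whole local configuration of the flip lives at heights $k \in \{|T|, |T|+1, |T|+2\}$ (the two parallelepipeds plus the square face between them, whose $+$-count is $|T|+1$, give cross-sections at heights $|T|$, $|T|+1$, $|T|+2$). The height $k = |T|+1$ is where the two hexagonal cross-sections of the parallelepipeds share a common square, and this is precisely the data of a square move in $G_{k}$.

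Next I would match this up with Galashin's observation, quoted just above the lemma, that a zonotopal flip at height $k$ induces a square move in $G_k$, a white trivalent move in $G_{k-1}$, and a black trivalent move in $G_{k+1}$. The content of the lemma is the sharper statement that every available square move in some $G_k$ arises this way and from a \emph{unique} flip of $\Delta$. For injectivity: given an available square move in $G_k$, the four faces of the square and the central square region recover, via the face-label dictionary, the four face labels in $\binom{[n]}{k}$ adjacent to a common square in the plabic tiling; these labels determine the set $S$ (it is the symmetric difference data of the four labels, exactly the indices along which the labels differ) and the common set $T$, hence determine the flip. For surjectivity onto available square moves: one checks that the conditions $S_i^+ = S_j^+$ and the alternation of the $s_{i_\ell}$ are equivalent to the local picture around the relevant square in $\Sigma_k$ being exactly the closed-square configuration of an $(M2)$ move (four triangles of alternating colors around a central square), i.e.\ that no bad double crossing or degeneracy obstructs the move, using that $\Delta$ is a genuine fine tiling and hence each $G_k$ is reduced.

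The main obstacle I expect is bookkeeping the case analysis cleanly: one must verify that the available-ness conditions on the two sides genuinely coincide rather than merely that a flip yields \emph{some} square move. Concretely, one has to rule out that an ``available square move in $G_k$'' could correspond to a would-be flip of $\Delta$ whose $s_{i_\ell}$ fail to alternate (which would instead be a non-fine coarsening, or no flip at all), and conversely that an available flip always lands at a height where the induced move on $G_k$ is a legitimate $(M2)$ and not, say, a degenerate square where two opposite vertices coincide. Both directions reduce to the statement that the four tiles of a flip-able $Z(4,3)$-copy cross-section the plane $\{x_1 = k\}$ in the four-triangle pattern exactly when $k = |T|+1$, which is a direct computation with the moment-curve coordinates; I would carry it out by noting that the tile $\tau_X$ meets height $k$ iff $|X^+| \le k \le |X^+| + |X^0|$, and tracking these inequalities through the four signed subsets of the flip. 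Once this single computation is in place, the bijection — flip of $S$ at ``its'' height $\longleftrightarrow$ square move in $G_{|T|+1}$ — follows formally.
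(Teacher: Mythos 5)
Your overall strategy is the same as the paper's: set up the bijection by a local computation, matching a flip $S$ with a square move at a specific height $k$, and then invert by reading $S$ off from the face labels around the square. The injectivity direction (recover $S$ as the four coordinates on which the five labels near the square disagree) and the idea of re-checking the conditions of Definition~\ref{zonotopeflip} from the square move are correct and match the paper's proof. However, the central local computation in your argument has a genuine error, both in the height and in the geometric picture, and the proof would not go through as written.

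The height where the square move occurs is $k = |S_a^+| + 2 = |T|+2$, not $|T|+1$: the common point $v = \tau_{X_a} \cap \tau_{X_b} \cap \tau_{X_c} \cap \tau_{X_d}$ equals $\sum_{i\in T} v_i$ plus exactly two of the $v_j$ with $j \in S$ (those $j$ with $s_j=1$, or after the flip those with $s_j=0$), so its first coordinate is $|T|+2$. More importantly, the picture of ``two hexagonal cross-sections sharing a common square face'' is not what happens and is not the data of an $(M2)$ move. Since $G_k$ is the planar \emph{dual} of the triangulation $\Sigma_k$, the square face of $G_k$ in which the $(M2)$ move lives is a $4$-valent \emph{vertex} of $\Sigma_k$, namely $v$. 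All four tiles $\tau_{X_a},\tau_{X_b},\tau_{X_c},\tau_{X_d}$ (not two) cut $\Sigma_k$ in triangles containing $v$; the colors alternate because the two tiles with $s_i=0$ (so $|X_i^+|=|T|$) are sliced at $|X_i^+|+2$ (a black triangle) while the two with $s_i=1$ are sliced at $|X_i^+|+1$ (a white triangle). Your criterion ``$\tau_X$ meets height $k$ iff $|X^+|\le k\le |X^+|+|X^0|$'' is right, but you would need to push it to conclude this four-triangles-around-a-vertex picture at the single height $|T|+2$; as stated, your proposal asserts a different and incorrect local configuration, so the surjectivity check (that the alternation condition on $s_{i_\ell}$ is equivalent to an available $(M2)$ move) does not follow.
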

\begin{proof}
A proof does not completely appear in~\cite{Gal17}, so we will include one here. Take any available flip $S \in \binom{[n]}{4}$ for $\Delta$, say $S = \{a,b,c,d\}$ with $a < b< c< d$. Let $k:= |S_a^+| + 2$. Then the intersection $\tau_{X_a} \cap \tau_{X_b} \cap \tau_{X_c} \cap \tau_{X_d} =: v$ is a vertex in $\Delta$ which is in the cross-section $\Sigma_{k}$. Further, the cross-sections of the tiles $\tau_{X_i}$ at level $k$ are triangles in $\Sigma_k$ which include $v$ as a vertex. The color of the triangle corresponding to $X_i$ is determined by the whether the plane $x = k$ cuts the tile at a lower or higher part, and so depends only on the value of $s_i$. Then the triangles from $X_a,X_c$ are of one color and $X_b,X_d$ have the other color, by Definition~\ref{zonotopeflip}. Finally, the intersections $\tau_{X_a}\cap \tau_{X_b},\tau_{X_b}\cap \tau_{X_c},\tau_{X_c}\cap \tau_{X_d},\tau_{X_d}\cap\tau_{X_a}$ all appear as edges connected to $v$ in $\Sigma_k$, because each of these tiles is a quadrilateral with two vertices at height $k$, one of which is $v$. Therefore $G_k$ has a square move pattern formed by the vertices in the four vertices from the four tiles. Performing this flip performs this square move and no other square moves in any other layer. 

It now suffices to invert this map. That is, take any available square move in any layer $G_k$, and recover the unique flip which performs that square move. Well, the square move is formed by four triangles in $\Sigma_k$, whose five vertices, when considered as strings in $\{+,-\}^n$, agree in all but four coordinates. This can be seen by noting that all five strings have exactly $k$ `$+$' symbols, and that when two vertices are adjacent they can only differ in two coordinates. These four coordinates $a < b < c < d$ form our set $S$, and if the flip corresponding to $S$ is available, then it must correspond to this square move in the map described in the previous paragraph. It then suffices to check that $S$ satisfies the conditions in Definition~\ref{zonotopeflip}. Indeed, $S_a^+ = S_b^+ = S_c^+ = S_d^+$, because the vertices agree on all coordinates outside of $S$. Now, of the four outer vertices, two are white and two are black, so two of $a,b,c,d$ will have $s_i = 1$ and two will have $s_i = 0$. Moreover, these colors are oriented in a cyclically alternating fashion, and they also correspond to the signed subsets $X_a,X_b,X_c,X_d$ in a cyclic fashion. Therefore we must have $X_a = X_c \neq X_b = X_d$, so we can conclude that $S$ is an available flip in $\Delta$.
\end{proof}

We would like to know when the other two plabic moves can be performed as well. A white or black trivalent move depends on the existence of a square move in a neighboring layer, so the following result about the relationship between the graphs $G_k$ is helpful.

\begin{lemma}[Galashin~\cite{Gal17}]\label{compatible}
Let $\Sigma_k$ be a colored and labeled triangulation for some tiling $\Delta$. Then $\Sigma_{k+1}$ is fixed up to the triangulation of the white regions and $\Sigma_{k-1}$ is fixed up to the triangulation of the black regions.
\end{lemma}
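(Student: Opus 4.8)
The plan is to extract a completely explicit combinatorial dictionary between the triangles of consecutive cross-sections by tracking how one cell of $\Delta$ behaves as the cutting height varies, and then to argue that this dictionary pins down everything about $\Sigma_{k+1}$ except the internal triangulations of its white regions. Start with the local picture: a full-dimensional tile of $\Delta$ is a parallelepiped $\tau_X = \{v_{X^+} + c_i v_i + c_j v_j + c_\ell v_\ell : c \in [0,1]^3\}$ with $X^0 = \{i < j < \ell\}$, and, writing $h := |X^+|$, its vertices sit at the four consecutive heights $h, h+1, h+2, h+3$, so its only two-dimensional slices occur at heights $h+1$ and $h+2$. The slice at height $h+1$ is the triangle with vertex labels $X^+ \cup \{i\}, X^+ \cup \{j\}, X^+ \cup \{\ell\}$, which is white because these labels intersect in $X^+$ (of size $(h+1)-1$); the slice at height $h+2$ is the triangle $X^+ \cup \{i,j\}, X^+ \cup \{i,\ell\}, X^+ \cup \{j,\ell\}$, which is black because its labels have union $X^+ \cup \{i,j,\ell\}$ (of size $(h+2)+1$). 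Analogously, a two-dimensional face $\tau_Y$ of $\Delta$ contributes an edge to exactly one cross-section, $\Sigma_{|Y^+|+1}$.

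Since the full-dimensional tiles of $\Delta$ partition $Z(n,3)$ with disjoint interiors, the white slices of those tiles with $|X^+| = k-1$ are precisely the white triangles of $\Sigma_k$, while the black slices of those same tiles are precisely the black triangles of $\Sigma_{k+1}$. This yields a canonical bijection between the white triangles of $\Sigma_k$ and the black triangles of $\Sigma_{k+1}$, written purely in terms of labels: the white triangle $\{A_1, A_2, A_3\}$ corresponds to the black triangle $\{A_1 \cup A_2,\ A_1 \cup A_3,\ A_2 \cup A_3\}$, with $X^+ = A_1 \cap A_2 \cap A_3$. In particular the set of black triangles of $\Sigma_{k+1}$ is a function of $\Sigma_k$ alone. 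I would then upgrade this to a statement about planar structure: because $\Sigma_{k+1}$ is an honest triangulation of a polygon in the sense of Theorem~\ref{galashin} (a simplicial complex with distinct vertex labels whose monochromatic regions are disks), any two of its triangles that share two vertex labels must share the edge between those labels. Hence the adjacencies among the black triangles, the grouping of their union into black regions, and the boundary label-sequences of those regions are all forced; together with the outer $n$-gon of $\Sigma_{k+1}$ (which depends only on $n$ and $k$), this determines every vertex and edge of $\Sigma_{k+1}$ lying on a black triangle. The only edges left undetermined are the white--white ones, which are exactly those interior to the white regions, so $\Sigma_{k+1}$ is fixed up to the triangulations of its white regions.

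The statement for $\Sigma_{k-1}$ follows by symmetry: the central reflection $x \mapsto \bigl(\sum_i v_i\bigr) - x$ of $Z(n,3)$ carries the fine tiling $\Delta$ to the fine tiling $\overline\Delta = \{(X^-, X^+) : X \in \Delta\}$, so that $\Sigma_m(\overline\Delta)$ is the reflection of $\Sigma_{n-m}(\Delta)$ with each label $T$ replaced by $[n] \setminus T$ and black and white interchanged. Applying the case already proven to $\overline\Delta$ at height $n-k$ and translating back shows that $\Sigma_{k-1}$ is fixed by $\Sigma_k$ up to the triangulations of its black regions.

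The genuine content lies in the rigidity used in the second paragraph: the label bijection is immediate, but passing from ``same black triangles'' to ``same black regions and same white-region boundaries'' relies on the regularity of plabic triangulations --- that $\Sigma_{k+1}$ is a simplicial triangulation of a disk with distinct vertex labels and disk-shaped monochromatic regions --- which is part of the machinery behind Theorem~\ref{galashin}. A slicker route, which would additionally yield the converse that every such retriangulation is realized by a tiling with the prescribed $\Sigma_k$, is to run the whole argument through Galashin's bijection~\cite{Gal17} between fine tilings of $Z(n,3)$ and compatible families of plabic triangulations, reducing the verification to the base cases $Z(4,3)$ and $Z(4,2)$.
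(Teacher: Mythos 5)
Your proposal is correct and takes essentially the same approach as the paper: the central observation in both is that each full-dimensional parallelepiped tile of $\Delta$ contributes a white triangle to one cross-section and a black triangle to the next one up, giving a label-level bijection between white triangles of $\Sigma_k$ and black triangles of $\Sigma_{k+1}$. The paper simply cites Galashin's Corollary 4.4 for the vertex-label determination and handles $\Sigma_{k-1}$ by the dual tile-slicing argument rather than by your central-symmetry trick, but these are presentational rather than substantive differences.
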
\begin{proof}
By Galashin's~\cite[Corollary 4.4]{Gal17}, the vertex labels of $\Sigma_{k+1}$ and $\Sigma_{k-1}$ are completely determined by $\Sigma_k$. The white triangles in $\Sigma_k$ cut a tile of $\Delta$ which is cut by a black triangle in $\Sigma_{k+1}$, and all black triangles in $\Sigma_{k+1}$ correspond to a white triangle in $\Sigma_k$. Similarly, the black triangles in $\Sigma_k$ give the white triangles in $\Sigma_{k-1}$. Therefore all the white and black regions are determined in both $\Sigma_{k+1}$ and $\Sigma_{k-1}$, and indeed all that is left is the triangulation of the white regions in $\Sigma_{k+1}$ and the black regions in $\Sigma_{k-1}$.
\end{proof}

This allows us to define $\UP$ and $\DOWN$ operations on plabic graphs with cyclic strand connectivity.

\begin{definition}\label{updown}
Let $G$ be a trivalent plabic graph with strand connectivity $\pi(n,k)$, and let $\Delta$ be any fine zonotopal tiling of $Z(n,3)$ such that $\Sigma_k$ is dual to $G$. Then $\UP(G,\Delta)$ is the plabic graph dual to $\Sigma_{k+1}$, and $\DOWN(G,\Delta)$ is the plabic graph dual to $\Sigma_{k-1}$. If $\Delta$ is not specified, $\UP(G)$ and $\DOWN(G)$ can be computed with an arbitrary valid $\Delta$, and the corresponding plabic triangulations are determined up to white and black triangulation, respectively.
\end{definition}

By Lemma~\ref{compatible}, we can determine $\UP(G)$ up to white trivalent moves, and $\DOWN(G)$ up to black trivalent moves. In particular, the face labels are determined exactly. In the next section, we will extend the definition of $\UP$ and $\DOWN$ to plabic graphs with any strand connectivity, and use these operations to prove our main result. The same operations can be applied to plabic triangulations, or even plabic tilings (and non-trivalent plabic graphs) if an arbitrary triangulation is chosen.

\section{Cycles in the Plabic Flip Graph}\label{cycles}

For a given decorated permutation $\pi^:$, any two trivalent reduced plabic graphs with connectivity $\pi^:$ can be related by a sequence of the moves (M1)--(M3). The \emph{flip graph} $F_{\pi^:}$ is the graph whose vertices are trivalent reduced plabic graphs with connectivity $\pi^:$ and whose edges connect plabic graphs related by a move. Cycles in the flip graph correspond to sequences of moves which leave the plabic graph unchanged. We are going to use the $\UP$ and $\DOWN$ operations to study these cycles; since we want our proof to apply to plabic graphs of any connectivity, we want to be able to include any plabic graph inside one with the cyclic connectivity. To do this we need to introduce some more terminology.

%To understand these cycles we would like to include them in a cycle of flips in a three-dimensional zonotopal tiling, which we understand much better. The cross-sections of zonotopal tilings only correspond to the connectivities $\pi(n,k)$, so we would like to extend any decorated permutation to one of these. 

%\noindent \textbf{Remark:} The previous proof could also have been formulated in the language of \emph{Grassmannian graphs}, where the graph $G$ would be seen to have \emph{helicity} $k$ \cite{Pos18}. We will not use the machinery introduced for this proof again, so it was kept to a minimum where possible.

%Now we only need to consider cycles in $F_{\pi(n,k)}$, and can use the cycles in $Z(n,3)$. 
%\section{Extension for General Strand Connectivities}\label{extend}

\begin{definition}[{\cite[Definition 16.1]{Pos06}}]\label{necklace}
A \emph{Grassmann necklace} $\mc I = (I_1,I_2,\ldots,I_n)$ is a sequence of subsets of $[n]$ of the same size such that for all $i$ there exists $j$ such that $I_{i+1} = (I_i\setminus \{i\})\cup \{j\}$, where as an index $i$ is considered modulo $n$.
\end{definition}

Grassmann necklaces are in bijection with decorated permutations via \emph{juggling patterns} (see~\cite[Section~3]{KnuLamSpe2013}) of period $n$ and throws of height at most $n$, and all of these are in correspondence with \emph{positroids} inside $\binom{[n]}{k}$ (originally shown directly in~\cite{Pos06}). Let $\phi$ be the bijection sending necklaces to permutations, and call the size of any element of $\phi^{-1}(\pi^:)$ the \emph{helicity} of $\pi^:$. 

\begin{definition}\label{grassupdown}
Suppose $\mc I = (I_1,\ldots,I_n)$ is a Grassmann necklace such that $\phi(\mc I)$ is not an identity decorated permutation. Define $\DOWN(\mc I)$ to be the necklace $(I_1\cap I_{\iota(1)},I_2\cap I_{{\iota(2)}},\ldots,I_n\cap I_{{\iota(n)}})$, where $\iota(j)$ is the last index before $j$  (in the cyclic order) such that $I_j \neq I_{\iota(j)}$. Similarly, $\UP(\mc I)$ is the necklace $(I_1\cup I_{\lambda(1)},I_2\cup I_{{\lambda(2)}},\ldots,I_n\cup I_{{\lambda(n)}})$, where $\lambda(j)$ is the first index after $j$ (in the cyclic order) such that $I_j \neq I_{\lambda(j)}$.
For decorated permutations $\pi^:$, define $\DOWN(\pi^:) \coloneqq \phi(\DOWN(\phi^{-1}(\pi^:)))$ and $\UP(\pi^:) \coloneqq \phi(\UP(\phi^{-1}(\pi^:)))$.
\end{definition}

Let us check that $\DOWN(\mc I)$ and $\UP(\mc I)$ are well-defined. Since $\phi(\mc I)$ is not an identity, not all the $I_j$ coincide, and the functions $\iota(\cdot)$ and $\lambda(\cdot)$ are well-defined. If $|I_j| = k$ for all $j$ then clearly $|I_j\cap I_{\iota(j)}| = k-1$ for all $j$ and $|I_j\cup I_{\lambda(j)}| = k+1$ for all $j$. How do $I_j \cap I_{\iota(j)}$ and $I_{j+1} \cap I_{\iota(j+1)}$ differ? Note that 
$$
(I_j \cap I_{\iota(j)}) \setminus (I_{j+1} \cap I_{\iota(j+1)}) = (I_j \cap I_{\iota(j)}) \setminus I_{j+1} \subseteq I_j \setminus I_{j+1} \subseteq \{j\},
$$
so $\DOWN(\mc I)$ is indeed a Grassmann necklace.
How do $I_j \cup I_{\lambda(j)}$ and $I_{j+1} \cup I_{\lambda(j+1)}$ differ? Note that 
$$
(I_j \cup I_{\lambda(j)}) \setminus (I_{j+1} \cup I_{\lambda(j+1)}) = I_j \setminus (I_{j+1} \cup I_{\lambda(j+1)}) \subseteq I_j \setminus I_{j+1} \subseteq \{j\},
$$
so $\UP(\mc I)$ is indeed a Grassmann necklace. 

%A \emph{simplified} Grassmann necklace $s(\mc I)$ is formed from a usual necklace $\mc I = (I_1,\ldots,I_n)$ by removing all $I_i$ such that $I_i = I_{i-1}$. The usual necklace can be recovered from its simplified (or just partially simplified) necklace, since we can tell how many times each element of $s(\mc I)$ needs to be duplicated by looking at the next element removed. Then $s$ is a bijection between simplified and usual Grassmann necklaces.

%\begin{definition}\label{grassupdown}
%Suppose $\mc I = (I_1,\ldots,I_m)$ is a simplified Grassmann necklace, $m \leq n$. Then $\UP(\mc I)$ is the necklace $(I_1\cup I_2,I_2\cup I_3,\ldots,I_m\cup I_1)$, and $\DOWN(\mc I)$ is the necklace $(I_1\cap I_2,I_2\cap I_3,\ldots,I_m\cap I_1)$. If $\mc I$ is not simplified, then compute $\UP(\mc I)$ and $\DOWN(\mc I)$ by first simplifying them. Similarly, for permutations $\pi^:$, define $\DOWN(\pi^:) \coloneqq f(\DOWN(f^{-1}(\pi^:)))$ and $\UP(\pi^:) \coloneqq f(\UP(f^{-1}(\pi^:)))$.
%\end{definition}

Now we state a useful result of Oh, Postnikov, and Speyer~\cite{Oh15} (for our purposes it does not matter what positroids are, or what it means for a collection to be weakly separated).

\begin{theorem}[{\cite[Theorems 1.3 and 1.5]{Oh15}}]\label{weak}
Suppose $\mc M \subset \binom{[n]}{k}$ is a positroid, $\mc I$ is the corresponding Grassmann necklace, and $\pi^: \coloneqq \phi(\mc I)$ the corresponding decorated permutation. Then the maximal by inclusion weakly separated collections inside $\mc M$ that contain $\mc I$ are exactly the collections of face labels for the plabic graphs with connectivity $\pi^:$.
\end{theorem}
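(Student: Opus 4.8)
The plan is to prove the two inclusions of the claimed equality separately, prefaced by the observation that the necklace $\mc I$ must lie inside every maximal collection. For that preface one checks that each necklace element $I_j$ is weakly separated from \emph{every} member of $\mc M$: indeed $I_j$ is the minimum of $\mc M$ in the linear order on $\binom{[n]}{k}$ obtained by cycling $[n]$ so that $j$ comes first, and a crossing configuration witnessing that $I_j$ is not weakly separated from some $J\in\mc M$ could be fed into the matroid exchange axiom to produce an element of $\mc M$ strictly below $I_j$, a contradiction. Granting this, $\mc I\cup\mc C$ is weakly separated whenever $\mc C\subseteq\mc M$ is, so any maximal such $\mc C$ contains $\mc I$.

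For the inclusion ``face labels of a plabic graph with connectivity $\pi^:$ $\subseteq$ maximal weakly separated collections inside $\mc M$'': each face label of such a graph $G$ is a basis of the positroid $\mc M$, a standard property of the plabic parametrization, so the face-label collection $\mc F(G)$ lies in $\mc M$. Weak separation and maximality of $\mc F(G)$ are invariant under the moves of Figure~\ref{plabicmovespng}: moves (M1) and (M3) leave the face labels unchanged, while a square move (M2) keeps the four labels $S\cup\{a,b\},\,S\cup\{b,c\},\,S\cup\{c,d\},\,S\cup\{d,a\}$ around the square and replaces the fifth, central label by the mutation $S\cup\{a,c\}\leftrightarrow S\cup\{b,d\}$ --- exactly a mutation of a weakly separated collection, which preserves both properties. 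By Theorem~\ref{connected} every trivalent reduced $G$ with connectivity $\pi^:$ is obtained from one fixed representative by such moves, so it is enough to verify the claim for a single explicit graph, for instance the one read off from the \Le-diagram of $\pi^:$, whose face labels admit a direct combinatorial description.

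For the reverse inclusion one must show that every maximal weakly separated $\mc C\subseteq\mc M$ equals $\mc F(G)$ for some plabic graph $G$ with connectivity $\pi^:$. The construction is the plabic tiling of $\mc C$: realize each $I\in\mc C$ as the lattice point $\sum_{i\in I}e_i$ of the hypersimplex, project generically to the plane, join two images exactly when the labels differ by a single exchange $e_i-e_j$ and are ``adjacent'' inside $\mc C$, and take as $2$-cells the resulting white and black convex polygons. One argues that maximality of $\mc C$ forces this $2$-complex to be a closed topological disk whose boundary cycle is the necklace $\mc I$; then $|\mc C|$ equals one plus the dimension of the positroid cell (purity), and the planar dual of the tiling, uncontracted to a trivalent graph, is a reduced plabic graph $G$ with $\mc F(G)=\mc C$. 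Its connectivity is $\pi^:$ because, by the first inclusion, the positroid cut out by $\mc F(G)$ together with $\mc I\subseteq\mc C$ recovers $\pi^:=\phi(\mc I)$.

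The main obstacle is this last paragraph: showing that the plabic tiling of an arbitrary maximal weakly separated collection in a general positroid is a disk with the correct boundary, and that its dual is reduced. The difficulty is that $\mc M$ need not be the uniform matroid $\binom{[n]}{k}$, so one cannot simply invoke the Grassmannian case of the Leclerc--Zelevinsky purity conjecture (which is the case $\pi^:=\pi(n,k)$). I would handle it by induction on $n$: loops and coloops of $\mc M$ are fixed points of $\pi^:$ and can be deleted, and the necklace pins down a ring of boundary faces of the tiling which, once stripped away, lowers $n$ or $k$; after finitely many such reductions one reaches a connected positroid and eventually the uniform case, already covered by Postnikov's cyclic plabic graphs. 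The technical heart is a local analysis showing that the link of each interior label of $\mc C$ is a single cycle with no gaps --- which is what makes the $2$-complex a manifold --- and the purity statement (that all maximal weakly separated collections inside $\mc M$ share one cardinality) then drops out by matching the face count from the disk structure against the move-connectedness established in the forward direction.
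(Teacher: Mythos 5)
The paper does not prove this statement; it cites it from Oh--Postnikov--Speyer (their Theorems 1.3 and 1.5), so there is no internal proof to compare against, only the cited one. Your high-level outline matches the structure of the cited proof: show $\mc I$ is weakly separated from all of $\mc M$ (so it lies in every maximal collection), check the forward inclusion by exhibiting one plabic graph and propagating along moves, and handle the reverse inclusion via the plabic tiling of an arbitrary maximal $\mc C$. The first two parts are essentially sound sketches, though proving maximality of the Le-diagram base case is nontrivial and is waved off, and the square-move mutation argument needs the purity statement you are trying to prove in order to conclude that both mutation partners lie in $\mc M$, which needs care to avoid circularity.

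The genuine gap is in the reverse inclusion. You yourself flag that the ``main obstacle'' is showing the plabic tiling of an arbitrary maximal $\mc C\subseteq\mc M$ is a disk bounded by $\mc I$, and you propose to handle it by induction on $n$ via deleting loops/coloops and ``stripping away a ring of boundary faces,'' ultimately reducing to the uniform case $\mc M=\binom{[n]}{k}$. This reduction does not obviously work: a connected non-uniform positroid cannot be turned into a uniform matroid by removing loops, coloops, or boundary elements, and after peeling a ``ring'' of faces from the tiling there is no a priori reason the remaining collection should be a maximal weakly separated collection inside a positroid of smaller $n$ or $k$ to which the inductive hypothesis applies. The cited proof of Oh--Postnikov--Speyer does not use such a reduction; it argues directly that for any positroid the plabic tiling of a maximal $\mc C$ fills the polygon bounded by $\mc I$, via a careful local analysis of the black and white cliques (precisely the ``link is a single cycle with no gaps'' statement you mention), and deduces purity from that. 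You have named the crucial lemma but the inductive scheme offered in its place would need to be replaced or substantially justified before the argument could close.
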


The positroid corresponding to the top cell $\Gr^{>0}(n,k)$ is all of $\binom{[n]}{k}$ and the corresponding decorated permutation is $\pi(n,k)$. A plabic graph is completely determined by its face labels. Therefore plabic graphs for the cyclic permutations are the maximal by inclusion weakly separated collections in $\binom{[n]}{k}$. Then if $G$ is any plabic graph, then its collection of face labels is a subset of those for some plabic graph $G'$ for the cyclic permutation. %We summarize this as a lemma.-

Since Grassmann necklaces and decorated permutations are in bijection, we can talk about the class of plabic graphs for a certain Grassmann necklace instead. If a plabic graph $G$ has strand permutation $\pi^:$, and the corresponding necklace is $\mc I = \phi^{-1}(\pi^:)$, then the face labels of the perimeter regions of the graph form $\mc I$ as well. To summarize: Theorem~\ref{weak} implies that any plabic graph $G$ with the strand permutation $\pi^:$ of helicity $k$ can be embedded as a subgraph into another plabic graph $G'$ corresponding to the cyclic permutation $\pi(n,k)$. Let $\Sigma_k$ be a plabic triangulation dual to $G'$, constructed as in Section~\ref{plabic}. The Grassmann necklace $\mc I$, considered as a polygonal curve in $\Sigma_k$, encloses triangulation $\sigma$, which we call the \emph{plabic triangulation} for $G$. If $G$ was trivalent, $\sigma$ is defined unambiguously; otherwise, $\sigma$ is defined up to triangulating some of its convex monochromatic regions, corresponding to the vertices of $G$ of degree greater than three. We regard $\sigma$ as a geometric realization of $G$ in the plane $\{(k,x_2,x_3) \in \mathbb{R}^3\}$. An equivalent realization of plabic graphs first appeared in~\cite[Section~9]{Oh15}, while its relation to zonotopal tilings was articulated in~\cite{Gal17}.

Note that a Grassmann necklace, embedded into the plane as above, is a closed polygonal, possibly self-intersecting curve. However, the self-intersections are never transversal, so after an ``infinitesimal'' perturbation, the necklace can be viewed as a closed Jordan curve, enclosing a region to be triangulated. Therefore, plabic triangulations may have degeneracies. For example, the plabic triangulation corresponding to the necklace $(134, 234, 134, 145, 135)$ consists of a black triangle together with a hanging edge; the necklace $(1245,2345,1345,1245,1258,1268,1278,1258)$ bounds the region consisting of two triangles connected by an edge (see Figure~\ref{selfintersect}). If the decorated permutation corresponding to a necklace is an identity then the triangulation consists of a single vertex; if the corresponding decorated permutation is a transposition then the triangulation is just an edge. 

\begin{figure}[ht]\centering
    \includegraphics[width=0.55\textwidth]{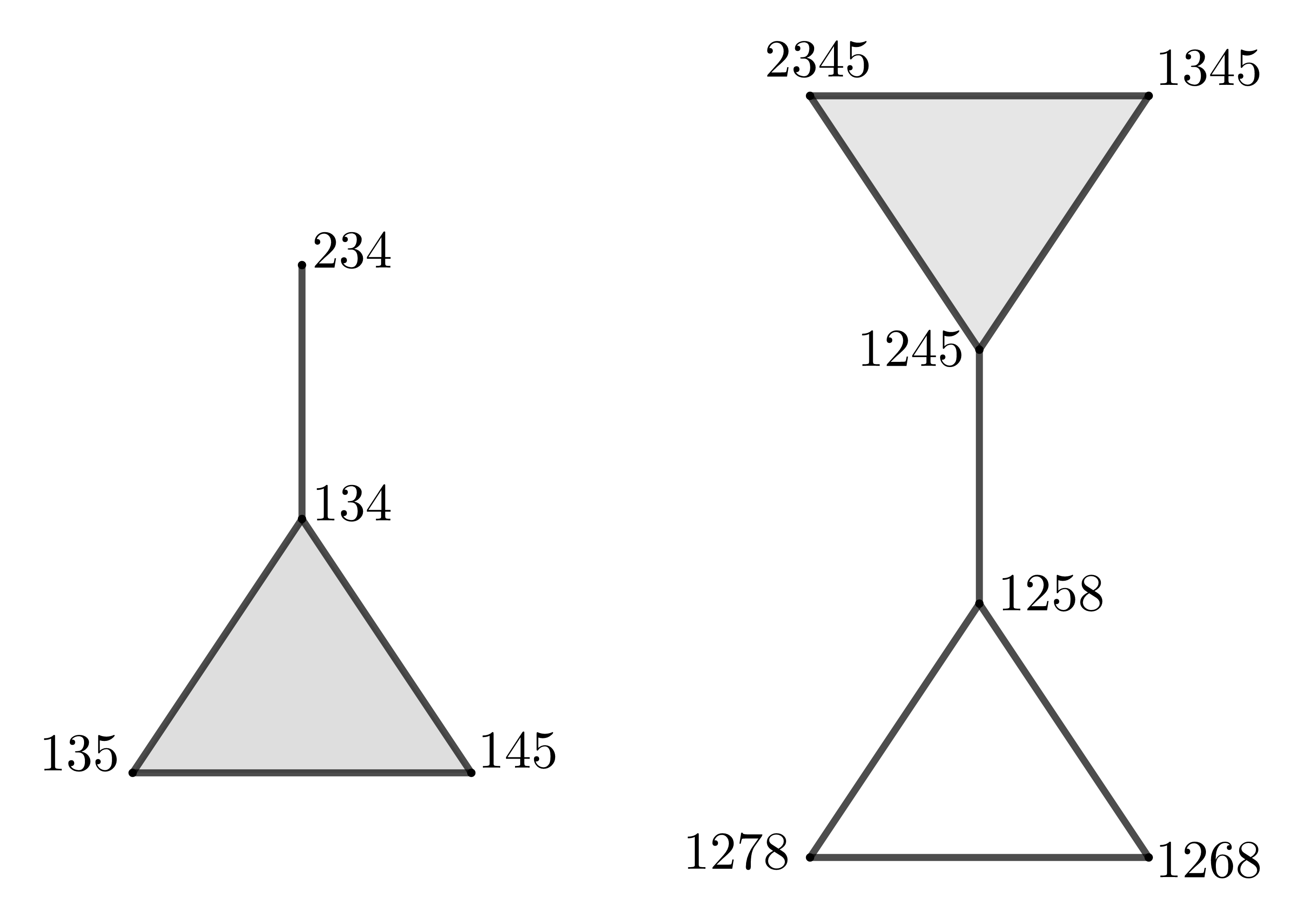}
    \caption{Grassmann necklaces which are not Jordan curves}
    \label{selfintersect}
\end{figure}

%if the corresponding decorated permutation is a transposition. In all other cases a plabic triangulation contains at least one triangle. {\color{red} WRONG}

We can use the above remarks to generalize Definition~\ref{updown}. Let $\Delta$ be a fine zonotopal tiling of $Z(n,3)$ inducing the triangulation $\Sigma_k$ in the $k$-th layer, and let $I_1$, $I_2$ be the face labels of any two adjacent vertices in $\Sigma_k$. It follows that $I_1 = S \cup \{a\}, I_2 = S \cup \{b\}$ for some $S \in \binom{[n]}{k-1}$, $a \neq b \in [n]$. Then the vertex with label $I_1 \cap I_2$ is also present in the tiling $\Delta$, as well as the vertex $I_1 \cup I_2$. If the edge $I_1 - I_2$ is an edge of a necklace $\mc I$ then $I_1 \cap I_2 \in \DOWN(\mc I)$ and $I_1 \cup I_2 \in \UP(\mc I)$. Therefore, if the tiling $\Delta$ respects $\mc I$ then it respects both $\DOWN(\mc I)$ and $\UP(\mc I)$. 

\begin{definition}\label{updowngeneral}
Let $G$ be a plabic graph with non-identity strand permutation $\pi^:$ of helicity $k$, and let $\Delta$ be a fine zonotopal tiling of $Z(n,3)$ for which the dual $G'$ of $\Sigma_{k}$ contains $G$ as a subgraph. Then define $\UP(G,\Delta)$ to be the subgraph of $\UP(G',\Delta)$ surrounded by $\UP(\phi^{-1}(\pi^:))$, and $\DOWN(G,\Delta)$ to be the subgraph of $\DOWN(G',\Delta)$ surrounded by $\DOWN(\phi^{-1}(\pi^:))$. If $\Delta$ is not specified, $\UP(G)$ and $\DOWN(G)$ are determined up to white and black trivalent moves, respectively.
\end{definition}

\begin{lemma}\label{updownnecklace}
Let $\mc I = \phi(\pi^:)$ be a necklace such that $\pi^:$ is not an identity and $\DOWN(\pi^:)$ is not an identity. Then the necklace $\mc J \coloneqq \UP(\DOWN(\mc I))$ is nested in $\mc I$, when embedded geometrically in the plane. %The region enclosed by $\mc I$ but not by $\mc J$ consists exclusively of white triangles in any plabic triangulation of the interior of $\mc I$.
\end{lemma}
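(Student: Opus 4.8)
The plan is to turn the recursive definitions of $\DOWN$ and $\UP$ into closed formulas on the necklace entries, extract an explicit description of $\mc J$ in terms of $\mc I$, and then read off the nesting from the moment-curve geometry.

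First I would compute $\DOWN(\mc I)$ and $\UP(\DOWN(\mc I))$ entrywise. For each $i$ let $\iota(i)$ be the last index before $i$ with $I_{\iota(i)}\ne I_i$; the Grassmann-necklace relation at $\iota(i)$ forces $I_i=(I_{\iota(i)}\setminus\{\iota(i)\})\cup\{c_i\}$ for the single element $\{c_i\}:=I_i\setminus I_{\iota(i)}$, so $(\DOWN\mc I)_i=I_i\cap I_{\iota(i)}=I_i\setminus\{c_i\}$. Running the analogous computation on $\mc D:=\DOWN(\mc I)$ and on $\UP$ gives $(\UP\mc D)_i=D_i\cup\{e_i\}$ with $\{e_i\}=D_{\lambda(i)}\setminus D_i$, where $\lambda=\lambda_{\mc D}$ is the next-change function of $\mc D$; moreover, when $\mc D$ changes at a step $m\to m+1$ the element it gains is exactly $c_m$ (immediate from $D_m=I_m\setminus\{c_m\}$ and the necklace relation for $\mc I$ at $m$). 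Hence $e_i=c_{\lambda(i)-1}$, and since $D_{\lambda(i)-1}=D_i$ everything collapses to the clean formula
\[
\mc J_i \;=\; (I_i\setminus\{c_i\})\cup\{c_{\lambda(i)-1}\}\;=\;I_{\lambda(i)-1},\qquad \lambda=\lambda_{\DOWN(\mc I)}.
\]

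Consequently every vertex of the polygon $\mc J$ is a vertex of $\mc I$, and as $i$ runs once around $[n]$ the index $\ell(i):=\lambda(i)-1$ advances weakly monotonically once around $[n]$, being constant exactly on the maximal blocks where $\DOWN(\mc I)$ is constant. Thus $\mc J$ traverses a cyclic subsequence of the vertices of $\mc I$: passing a maximal constant-$\DOWN$ block, $\mc J$ replaces an arc $I_b\to I_{b+1}\to\cdots\to I_c$ of $\mc I$ by the single chord $I_b\to I_c$. Nesting therefore reduces to showing that each such chord lies in the closed region bounded by $\mc I$, i.e.\ that the short-cut arc bulges outward. For this I would use the moment-curve embedding $S\mapsto\sum_{j\in S}v_j$: on such a block $D_m=I_b\setminus\{b\}$ for all $m$, hence $I_m=(I_b\setminus\{b\})\cup\{c_m\}$, so the arc vertices differ only in one slot and lie on a single translate of the moment curve, at parameters $b,c_{b+1},\dots,c_c$ which advance monotonically along the arc by the non-crossing (juggling) property of a Grassmann necklace. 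The arc is then a convex chain, convexity of the moment curve puts its chord on the interior side of $\mc I$, and an orientation/winding check (both $\mc I$ and its sub-polygon $\mc J$ bound their regions with the orientation inherited from a zonotopal cross-section) gives $\operatorname{region}(\mc J)\subseteq\operatorname{region}(\mc I)$.

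I expect this last geometric step — that the short-cut arcs are convex chains — to be the main obstacle: the rest is bookkeeping, but this genuinely uses the global structure of a Grassmann necklace rather than a single legal step, since there are locally legal set configurations, realizable by no necklace, for which the analogous chord escapes the region. The same content can instead be packaged as $\mc M_{\mc J}\subseteq\mc M_{\mc I}$ for the associated positroids — the regions being the positroid polygons (cf.\ Theorem~\ref{weak}), nesting is then automatic — which reduces to the Gale inequalities $\mc J_i\ge_i I_i$; but proving those runs through the same non-crossing analysis, so the difficulty is not avoided.
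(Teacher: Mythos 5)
Your proposal follows essentially the same route as the paper's proof: establish an entrywise formula identifying the vertices of $\mc J$ with a subset of the vertices of $\mc I$, and then argue that each excised arc of $\mc I$ is a convex chain lying on a translate of the moment curve (since all its vertices contain a common $(k{-}1)$-subset, namely the constant value of $\DOWN(\mc I)$ over that block), so replacing it by its chord stays inside. Your closed formula $\mc J_i=I_{\lambda(i)-1}$ agrees with the paper's $J_j=I_{\iota(\lambda(j))}$ once one checks that $\iota(\lambda(j))=\lambda(j)-1$, which follows because $I_{\lambda(j)-1}=I_{\lambda(j)}$ would force $L_{\lambda(j)-1}=L_{\lambda(j)}$, contradicting the choice of $\lambda$. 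The monotonicity of the moment-curve parameter along the excised arc, which you rightly flag as the substantive point, is correct and does follow from the global Grassmann-necklace relation (the "one-in/one-out" cyclic shift structure forces the new singleton elements to advance in index), though the paper states the convexity conclusion without spelling this out.
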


%\begin{remark}
%When we talk about being nested, we assume that we work with slightly perturbed %necklaces, so that they are Jordan curves. 
%\end{remark}

\begin{proof}%[Proof of Lemma~\ref{updownnecklace}]
Let $\mc I = (I_1,\ldots,I_n)$ be such a necklace, let $\mc L = (L_1,\ldots,L_n)$ be $\DOWN(\mc I)$, and let $\mc J = (J_1,\ldots,J_n)$ be $\UP(\mc L)$. Say that $\iota(j)$ denotes the last index cyclically before $j$ such that $I_j \neq I_{\iota(j)}$, and $\lambda(j)$ denotes the next index cyclically after $j$ such that $L_j \neq L_{\lambda(j)}$. Note that $\iota$ and $\lambda$ are well-defined because $\phi^{-1}(\mc I)$ and $\phi^{-1}(\mc L)$ are both not an identity, so not all of the $I_j$ and $L_j$ are the same. Our notation here gets at the same data as the \emph{reduced Grassmann necklaces} of Farber and Galashin~\cite[Section 6]{FarGal18}. By Definition~\ref{updowngeneral}, we have 
$$
J_j = L_j \cup L_{\lambda(j)} = (I_{j}\cap I_{\iota(j)})\cup (I_{\lambda(j)} \cap I_{\iota(\lambda(j))}).
$$ 

We claim that $J_j = I_{\iota(\lambda(j))}$. Note that $\iota(\lambda(j))$ belongs to the half-open interval $[j, \lambda(j))$ in the cyclic order. 
Therefore, $L_j = L_{\iota(\lambda(j))}$. Now we notice that $L_j = L_{\iota(\lambda(j))} = I_{\iota(\lambda(j))} \cap L_{\iota(\iota(\lambda(j)))} \subseteq I_{\iota(\lambda(j))}$, and $L_{\lambda(j)} = I_{\lambda(j)} \cap I_{\iota(\lambda(j))} \subseteq I_{\iota(\lambda(j))}$; hence, $J_j = L_j \cup L_{\lambda(j)} \subseteq I_{\iota(\lambda(j))}$. But $J_j$ and $I_{\iota(\lambda(j))}$ are of the same size, so they coincide.

We have now shown that $J_j = I_{\iota(\lambda(j))}$ for all $j$; hence the vertices of the polygonal curve $\mc J$ form a subset of the vertices of $\mc I$. Let $J_i - J_{j+1}$ be a non-degenerate edge of $\mc J$, that is, $J_j \neq J_{j+1}$. Then $\lambda(j) = j+1$ and $\iota(j+1) = j$. From the definitions of $\iota(\cdot)$ and $\lambda(\cdot)$ it follows that $J_j =  I_{\iota(\lambda(j))} = I_j$ and that $j = \iota^m(\lambda(j+1))$ for some $m \ge 2$. If $m = 2$, the edge $J_i - J_{j+1}$ coincides with the edge $I_j - I_{j+1}$ of $\mc I$. If $m \ge 3$, take a closer look at the polygonal curve $I_{\iota^m(\lambda(j+1))} - I_{\iota^{m-1}(\lambda(j+1))} - \ldots - I_{\iota(\lambda(j+1))}$. The common intersection of all those sets is $L_{\iota^{m-1}(\lambda(j+1))} = \cdots = L_{\iota(\lambda(j+1))}$, so this is a convex curve. Together with the edge $J_i - J_{j+1}$ it bounds a region, which has no other $I_i$'s inside and which is always white in any plabic triangulation of the domain inside $\mc I$. Therefore, the curve $\mc J$ can be obtained from the curve $\mc I$ by replacing some of its convex parts by straight edges. Thus, $\mc J$ is nested in $\mc I$. 
\end{proof}

\begin{remark}
As the referee pointed out, the proof of Lemma~\ref{updownnecklace} in fact describes the following effect of the composition $\UP \circ \DOWN$ on a necklace $\mc I = (I_1, \ldots, I_n)$: each $I_j$ is replaced with $I_{\alpha(j)}$, where $\alpha(j)$ is the first index $k$ after $j-1$ (in the cyclic order) such that $\phi(\mc I)(k-1) \neq k$.
\end{remark}

We are now ready to prove our main result.

\begin{theorem}\label{plabicycles}
Let $\mb X_{\pi^:}$ be the two-dimensional regular CW-complex given by the flip graph of trivalent reduced plabic graphs with connectivity $\pi^:$, with the following $2$-cells glued to it (cf. Figure~\ref{plabicyclepng}):
\begin{itemize}
\item A quadrilateral, wherever there is a 4-cycle generated by two moves occurring in separate parts of a plabic graph;
\item A pentagon, wherever there is a 5-cycle generated by five white or five black trivalent moves, such that all flips take place in a subgraph which forms a plabic graph with connectivity $\pi{(5,1)}$ or $\pi{(5,4)}$ (we call those white and black pentagonal cells, respectively);
\item A decagon, wherever there is a 10-cycle consisting of 5 plabic moves alternating with 5 white or 5 black trivalent moves, such that all flips in the cycle take place in a subgraph which forms a plabic graph with connectivity $\pi{(5,2)}$ or $\pi{(5,3)}$ (we call those white and black decagonal cells, respectively).
\end{itemize}
Then $\mb X_{\pi^:}$ is simply connected. % when $\pi^:$ is cyclic.
\end{theorem}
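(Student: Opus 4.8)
The plan is to mimic the structure of the proof of Theorem~\ref{zonotopecycles}, but working inside the flip graph for $Z(n,3)$ rather than $Z(n,d+1)$, and to transport cycles between the plabic flip graph $F_{\pi^:}$ and the zonotopal flip graph via the $\UP$ and $\DOWN$ operations. First I would reduce to the cyclic case: by Theorem~\ref{weak} (and the discussion following it), any plabic graph with connectivity $\pi^:$ of helicity $k$ embeds as a subgraph inside a plabic graph $G'$ with connectivity $\pi(n,k)$, and any move in $G$ is realized as a move in $G'$ inside the region bounded by the Grassmann necklace $\mc I = \phi^{-1}(\pi^:)$. Hence a loop $\gamma$ in $F_{\pi^:}$ is a loop in $F_{\pi(n,k)}$, and the claimed $2$-cells of $\mb X_{\pi^:}$ (quadrilaterals, pentagons, decagons whose flips stay inside a subgraph with connectivity $\pi(5,\cdot)$) are precisely the restrictions of the corresponding cells of $\mb X_{\pi(n,k)}$; so it suffices to prove the theorem for $\pi^: = \pi(n,k)$.

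Next I would set up the correspondence with zonotopal tilings. By Theorem~\ref{galashin}, trivalent reduced plabic graphs with connectivity $\pi(n,k)$ are exactly the graphs $G_k$ arising as the $k$-th cross-section $\Sigma_k$ of a fine zonotopal tiling $\Delta$ of $Z(n,3)$; by Lemma~\ref{squareflips}, the flips of $\Delta$ are in bijection with the square moves among all the $G_k$, and Galashin's observation identifies a height-$k$ zonotopal flip with a square move in $G_k$, a white trivalent move in $G_{k-1}$, and a black trivalent move in $G_{k+1}$. So a loop $\gamma$ of plabic moves in $F_{\pi(n,k)}$ should be lifted, one layer at a time, to the zonotopal flip graph of $Z(n,3)$. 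A square move in $G_k$ lifts directly to a zonotopal flip; a white or black trivalent move in $G_k$ is a move that does \emph{not} change the cross-section up to white/black retriangulation (Lemma~\ref{compatible}), so it corresponds either to a zonotopal flip in a neighboring layer (one layer up for black moves, one layer down for white moves) or to the choice of a different compatible triangulation of a monochromatic region, which is itself a small plabic cycle. The heart of the argument is: a $4$-cycle of commuting zonotopal flips, and a $(2d+4) = 10$-cycle for a copy of $Z(5,3)$ (here $d=3$), are nullhomotopic in $\mb Z_{n,3}$ by Theorem~\ref{zonotopecycles}; pushing these nullhomotopies down through the cross-section maps $\Sigma_{k-1},\Sigma_k,\Sigma_{k+1}$ yields, respectively, the quadrilateral $2$-cells and the decagonal $2$-cells of $\mb X_{\pi(n,k)}$ (the $Z(5,3)$ decagon seen at height $k$ is a $5$-square-moves/$5$-trivalent-moves decagon — its cross-sections at heights $k\pm1$ give the two decagonal types and at heights further away give quadrilaterals). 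Finally, a loop of $\gamma$ that stays within a single monochromatic region — i.e.\ a cycle of purely white or purely black trivalent moves with no square move in the relevant layer — corresponds to different triangulations of a convex polygon, so it lives in (a copy of) the associahedron $1$-skeleton; using Lemma~\ref{updownnecklace} one checks such a region, after applying $\UP\circ\DOWN$, is a plabic graph with cyclic connectivity $\pi(5,1)$ or $\pi(5,4)$ when the polygon is small enough, and the associahedron fundamental group is generated by squares and pentagons (the $d=1$ case of Theorem~\ref{zonotopecycles}), which are exactly the remaining $2$-cells. Assembling: write $[\gamma]$ as a product of loops each supported on a single layer (using base-paths $\delta_i$ as in the proof of Theorem~\ref{zonotopecycles} to localize), show each such loop is a product of conjugates of square, pentagon, or decagon cells, and conclude $[\gamma] = 1$.

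More concretely, I would carry out the steps in this order: (1) reduce to $\pi^: = \pi(n,k)$ via Theorem~\ref{weak}; (2) establish the lifting dictionary between plabic moves in $\{G_k\}$ and zonotopal flips/retriangulations, carefully tracking that a white move at height $k$ is a black move at height $k-1$ is a square move at height... — i.e.\ identify which layer ``carries'' each move; (3) given an arbitrary plabic loop $\gamma$ at connectivity $\pi(n,k)$, lift it to a loop (or a controlled collection of loops) in the zonotopal flip graph of $Z(n,3)$, apply Theorem~\ref{zonotopecycles} to nullhomotope the zonotopal loop using only square and decagon ($2\cdot3+4$) cells, and project the nullhomotopy back down; (4) handle the ``fibers'' — the ambiguity in $\DOWN(G)$ and $\UP(G)$ up to black/white retriangulation — by recognizing retriangulation cycles as associahedron loops and invoking the $d=1$ and $d=2$ cases of Theorem~\ref{zonotopecycles} to fill them with pentagons and squares; (5) check that every $2$-cell used downstairs is one of the five types in the statement (quadrilateral, white/black pentagon, white/black decagon), in particular that a pentagon genuinely comes from a $\pi(5,1)$ or $\pi(5,4)$ subgraph and a decagon from a $\pi(5,2)$ or $\pi(5,3)$ subgraph, by a local analysis of which subgraph is involved.

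The main obstacle I anticipate is step~(4), i.e.\ correctly bookkeeping the non-uniqueness of $\UP$ and $\DOWN$: a plabic move in layer $k$ is pinned down by a zonotopal flip only up to how the white regions of $\Sigma_{k+1}$ and the black regions of $\Sigma_{k-1}$ are triangulated, so lifting a plabic \emph{loop} to a zonotopal \emph{loop} requires making consistent triangulation choices around the whole cycle, and when one returns to the starting graph the accumulated discrepancy is a retriangulation loop that must itself be filled. Making this precise — essentially showing that the map from zonotopal flip cycles to plabic flip cycles is a ``fibration'' whose fibers are associahedron $1$-skeleta — and verifying that no $2$-cells outside the listed five types are needed to fill both the base loops and the fiber loops, is where the real work lies; the rest is an organized application of Theorems~\ref{galashin}, \ref{weak}, \ref{zonotopecycles} and Lemmas~\ref{squareflips}, \ref{compatible}, \ref{updownnecklace}.
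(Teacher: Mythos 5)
The central gap is in your step~(1). You claim it suffices to treat $\pi^:=\pi(n,k)$ because any $\pi^:$ of helicity $k$ embeds into the cyclic connectivity and the 2-cells of $\mb X_{\pi^:}$ restrict from those of $\mb X_{\pi(n,k)}$. But simple connectedness does not pass to subcomplexes: a loop in the embedded copy of $\mb X_{\pi^:}$ bounds a disk in $\mb X_{\pi(n,k)}$, yet that disk may use cells that change the part of the ambient graph $G'$ lying outside the region bounded by the Grassmann necklace $\mc I$, hence leave the subcomplex. There is no evident retraction $\mb X_{\pi(n,k)}\to\mb X_{\pi^:}$ to push the nullhomotopy back in. This is a structural problem, not a bookkeeping detail, and the paper flags it explicitly: the last paragraph of the Appendix warns that ``the contraction of the loop might not stay inside the graph with connectivity $\pi^:$'', and Open Question~(2) in Section~\ref{questions} asks precisely whether the general case can be proven by this embedding-into-the-zonotope route.

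For the cyclic case, where your step~(1) is vacuous, your plan does track the Appendix proof. The two things you flag as ``the main obstacle'' -- performing a trivalent move by zonotopal flips that leave the other layers undisturbed, and reconciling two tilings that share $G_k$ via flips fixing $G_k$ -- are exactly Lemmas~\ref{cleanmove} and~\ref{fixing} there, and they are nontrivial but workable. One small correction: the $d=1$ case of Theorem~\ref{zonotopecycles} gives squares and hexagons, not pentagons; the pentagon/associahedron statement (Fact~\ref{triang}) is the $k=1$ case of Theorem~\ref{plabicycles} itself and only arises after taking cross-sections.

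The paper's actual proof for general $\pi^:$ takes a genuinely different route that avoids the obstruction above: it inducts on helicity $k$. One forms $\DOWN(L)$ inside $\mb X_{\DOWN(\pi^:)}$, obtains a \emph{cellular contraction} by the inductive hypothesis, and lifts that contraction cell-by-cell via $\UP$ back to a contraction of $L$, using Lemma~\ref{updownnecklace} to nest $\UP(\DOWN(\mc I))$ inside $\mc I$ so the lifted cells extend to connectivity $\pi^:$. Because this goes \emph{down} in helicity rather than \emph{up} to the cyclic case, the homotopy never needs to stray outside the target flip graph. If you want to salvage the spirit of your approach, you would either need to show $\mb X_{\pi^:}$ is a retract of $\mb X_{\pi(n,k)}$ (unlikely and unproven), or replace the reduction step by the paper's helicity induction.
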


We need to show that every loop $L$, which is a continuous map $S^1 \to \mb X_{\pi^:}$, there is a continuous map $f: D^2 \to \mb X_{\pi^:}$ from the two-dimensional disk to $\mb X_{\pi^:}$, whose restriction $f\vert_{S^1}$ onto the boundary $S^1 = \partial D^2$ is $L$. In fact, we will prove a slightly stronger statement: we will build a contraction $f$ of a special ``cellular'' kind. 

By the standard cellular approximation argument, we can homotope $L$ so that it follows only the edges of $\mb X_{\pi^:}$. So $L$ can be regarded combinatorially as a loop of plabic moves, connecting plabic graphs with necklace $\mc I = \phi(\pi^:)$. 

\begin{definition}\label{cellcontract}
Let $L$ be a loop, possibly self-intersecting, of plabic moves in $\mb X_{\pi^:}$, as above. The following data is said to be a \emph{cellular contraction} of $L$.
\begin{itemize}
    \item A polyhedral complex $\mb D \subset \mb R^2$, consisting of a finite number of points (vertices, or $0$-cells), straight segments (edges, or $1$-cells) and convex 4-, 5-, 10-gons in the plane ($2$-cells of the complex); %the relative interiors of any two cells should not intersect; 
    the intersection between any two closed cells should be either empty or to be a face of each of them. The union of all cells should be a contractible closed set in $\mb R^2$, which we denote $|\mb D|$.
    \item A continuous map $f : |\mb D| \to \mb X_{\pi^:}$ of cell complexes, mapping vertices to vertices, edges to edges, $m$-gonal $2$-cells to $m$-gonal $2$-cells. Moreover, $f$ should be a homeomorphism on each cell of $\mb D$. 
    \item Labels of the cells of $\mb D$, obtained as follows. Each cell $e$ of $\mb D$ (of any dimension) inherits the label corresponding to the cell $f(e)$ of $\mb X_{\pi^:}$. That is, the cells of $\mb D$ are labeled by plabic graphs, plabic moves and 4-/5-/10-cycles of flips just as in $\mb X_{\pi^:}$; of course, the labels may repeat.
    \item A continuous map $\ell : S^1 \to \partial |\mb D|$, traversing the boundary of $|\mb D|$ clockwise so that the composition $f \circ \ell$ coincides with the loop $L$. The boundary $\partial |\mb D|$ might not be a Jordan curve, but it becomes a Jordan curve after a slight inflation of $|\mb D|$, and this is how the word ``clockwise'' should be understood.
\end{itemize}
\end{definition}

Clearly, if $L$ admits a cellular contraction then it is contractible in the usual sense.

\begin{proof}[Proof of Theorem~\ref{plabicycles}]

We induct on the helicity of $\pi^:$, which we denote $k$. The case $k=0$ is trivial, since the only decorated permutation of helicity $0$ is the identity with all elements white, there is just one plabic graph of this strand connectivity. The case $k=1$ is not trivial, but seems well-known. In this case the vertices of $\mb X_{\pi^:}$ are just triangulations of a convex polygon, the flips correspond to the switch of the diagonal in a quadrilateral, and the only non-trivial relation between flips is the 5-cycle of flips in a pentagon. The result follows from the original work of Stasheff~\cite{Sta63}, but in Appendix we include an independent proof of this, involving Theorem~\ref{zonotopecycles}. We emphasize that the proof in Appendix (and apparently, all the older proofs) builds cellular contractions. 

Now assume $k \ge 2$. Let $\mc I$ the Grassmann necklace corresponding to $\pi^:$. %, embedded geometrically in the plane as in~\cite{OhSpe14}. 
%{\color{red}[I assume here that we defined somewhere before UP and DOWN, e.g. DOWN(plabic triangulation) = plabic tiling with white regions triangulated finely, and black regions not triangulated]}
In the rest of this proof, we use words ``(trivalent) plabic graph'' and ``plabic triangulation'' interchangeably, assuming the bijection between them.

%Start with a loop $L$, which is a continuous map $S^1 \to \mb X_{\pi^:}$. By the standard cellular approximation argument, we can homotope $L$, so that it follows only the edges of $\mb X_{\pi^:}$. So $L$ can be regarded as a loop
Take any loop $L$ of plabic moves, connecting plabic graphs with necklace $\mc I = \phi(\pi^:)$. If all the moves along $L$ are just white trivalent moves, then the contraction is built as follows. All the moves are triangulation flips in a few convex white regions. Homotoping $L$ across quadrilateral cells we can make $L$ into the wedge of loops associated to separate white regions. Then build cellular contractions for those loops independently across white pentagonal cells, using the base case $k=1$. 

If $\pi^:$ is an identity decorated permutation, there is just one plabic graph of that strand connectivity, so the theorem follows trivially. If $\pi^:$ is not an identity, $\DOWN(\pi^:)$ is well-defined. If $\DOWN(\pi^:)$ happens to be an identity, it follows that $\mc I$ encloses a white region, and this case was already discussed above. So we assume that $\DOWN(\pi^:)$ is not an identity, so that $\mc J \coloneqq \UP(\DOWN(\mc I))$ is well-defined.

%The loop $L$ might be self-intersecting; in particular, it might traverse several edges multiple times. We note, though, that it suffices to prove the theorem for loops without self-intersections. Any self-intersecting loop then can be contracted as follows:
%\begin{itemize}
%    \item If there is a hanging double edge in $L$, that is, a fragment $\ldots - G_0 - G_1 - G_2 - \ldots$, we just contract it to obtain $\ldots - G_0 - \ldots$;
%    \item If there is a hanging simple loop in $L$, that is, a fragment $\ldots - G_0 - G_1 - \ldots - G_m - G_0 - \ldots$, $m \le 2$, $G_i \neq G_j$ for $i \neq j$, then we contract it to obtain $\ldots - G_0 - \ldots$;
%    \item If there no hanging edges or loops, we are left with a simple loop (or it is already trivial), which we can contract.
%\end{itemize}

%Now assume that $L$ has no self-intersections. 

\textbf{Step 1.} We define a loop $\DOWN(L)$ of plabic graphs within the necklace $\DOWN(\mc I)$, with strand permutation $\DOWN(\pi^:)$. For each graph $G$ in $L$, the graph $\DOWN(G)$ is determined up to the triangulation of black regions. Choose those triangulations in an arbitrary way (we keep the notation $\DOWN(G)$ for those plabic triangulations). For every edge $G-G'$ in $L$, the graphs $\DOWN(G)$ and $\DOWN(G')$ differ by at most one move corresponding to the move between $G$ and $G'$, plus maybe some black trivalent moves. Connect $\DOWN(G)$ and $\DOWN(G')$ by a chain of plabic graphs through those moves (the chain might be of length zero if the move $G-G'$ was white trivalent and $\DOWN(G) = \DOWN(G')$). All the chains, over all edges $G-G'$ of $L$ can be incorporated into a loop of plabic graphs with necklace $\DOWN(\mc I)$. We call this loop $\DOWN(L)$. Note that this loop consists of more than one vertex, because $L$ contained some moves other from white trivalent ones.

All those graphs in $\DOWN(L)$ have helicity $k-1$, so we can apply the inductive hypothesis to contract it in the complex $\mb X_{\DOWN(\pi^:)}$. Moreover, the induction hypothesis gives us a cellular contraction $f: \mb D \to \mb X_{\DOWN(\pi^:)}$ of the loop $\DOWN(L)$, as in  Definition~\ref{cellcontract}. In particular, there is a map $\ell : S^1 \to \partial |\mb D|$ with $f \circ \ell = \DOWN(L)$.

%from the two-dimensional disk to $\mb X_{\DOWN(\pi^:)}$, whose restriction $f\vert_{S^1}$ onto the boundary $S^1 = \partial D^2$ follows $\DOWN(L)$. Ideally, we would like $f$ to satisfy the following properties:
%\begin{itemize}
%    \item For every open cell $e \subset \mb X_{\DOWN(\pi^:)}$ (of any dimension), the map $f : f^{-1}(e) \to e$ is a trivial covering map, that is, $f^{-1}(e)$ splits into several connected components, each of which is mapped homeomorphically onto $e$;
%    \item The CW-structure on $X_{\DOWN(\pi^:)}$ can be pulled back to get the structure of a CW-complex on $D^2$; we call this complex $\mb D$, and label its cells by plabic graphs, plabic moves and 4-/5-/10-cycles of flips just as in $\mb X_{\DOWN(\pi^:)}$; of course, the labels may repeat.
%\end{itemize}

%Even though it is not clear to us in general if we can homotope arbitrary $f$ to such a map, we note that the induction hypothesis will give us such a contraction. As for the base case $k=1$, the argument in Appendix gives such a contraction (and most likely, all the older arguments give it too). 

%With some abuse of language we can treat $\mb D$ as a polyhedral surface glued out of the copies of the 2-cells of $X_{\DOWN(\pi^:)}$ (multiple copies are allowed), and whose boundary labeled just like the loop $\DOWN(L)$. 

\begin{figure}[ht]\centering
\includegraphics[width=0.96\textwidth]{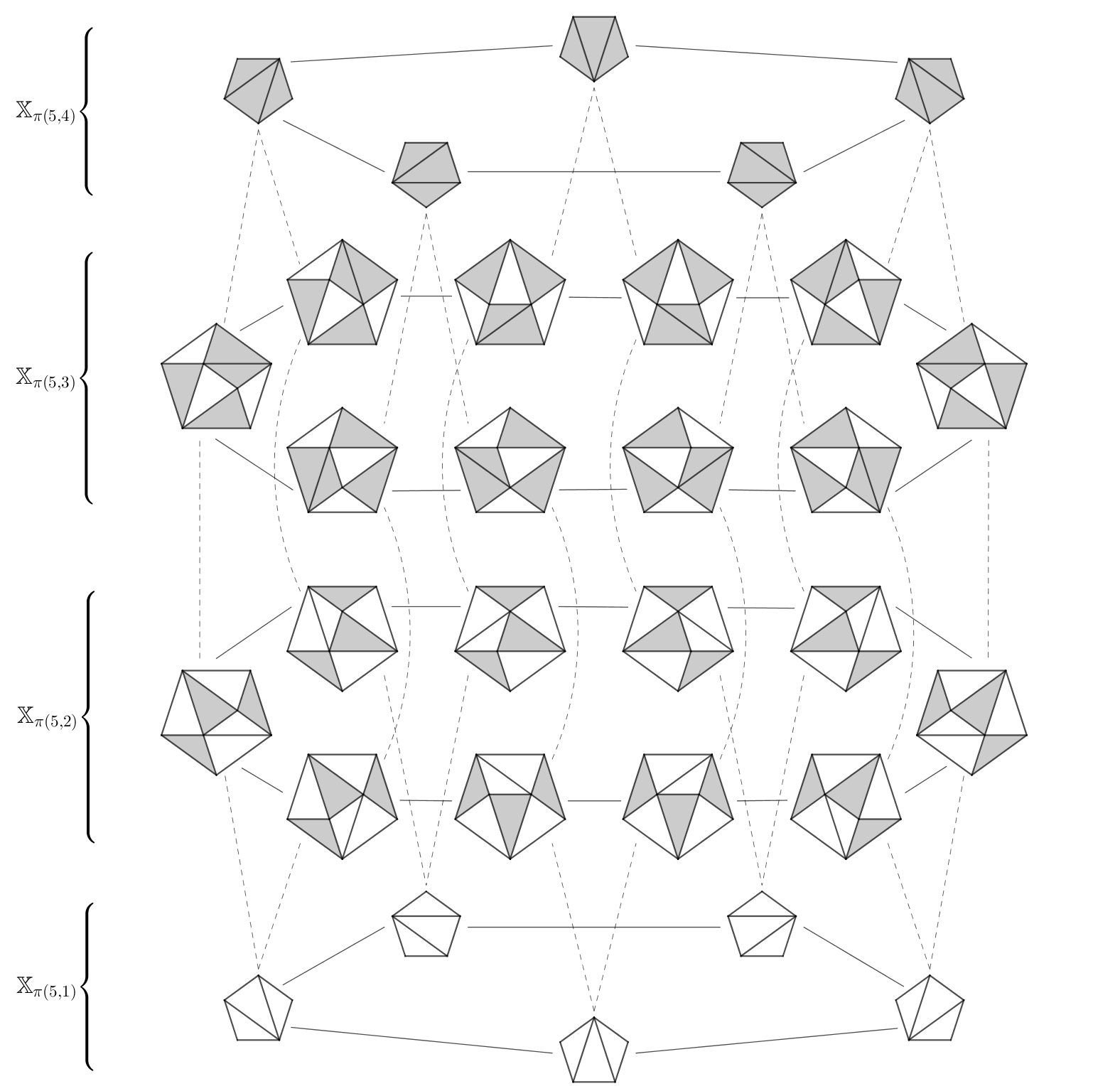}
\caption{Plabic cycles arising from the 10-cycle of zonotopal flips in $Z(5,3)$}
\label{plabicyclepng}
\end{figure}

\textbf{Step 2.} We would like to ``lift'' $f$ back to the $k$-th level, that is, to construct a cellular contraction $\UP(f) : \UP(\mb D) \to \mb X_{\pi^:}$, whose restriction to the boundary would be $L$. We build this contraction one cell at a time.

%The complex $\mb D$ can be covered by maximal faces, convex polygonal cells and possibly some edges, which we denote $P_i$.

We start by lifting the polygonal $2$-cells of $\mb D$, which we denote $P_i$.
Each of them can be lifted separately to $\UP(\mb D)$ in the following manner, which can be easily guessed if one looks at the cross-sections of the 10-cycle of the zonotopal tilings of $Z(5,3)$, see Figure~\ref{plabicyclepng}. 

\begin{itemize}
    \item If $P_i$ is a black pentagonal cell, $\UP(P_i)$ is a degenerate polygonal cell, consisting of a single vertex, whose label is the graph, lying above all the five graphs-labels of $P_i$. The triangulation of white regions can chosen in an arbitrary way. 
    \item If $P_i$ is a black decagonal cell, $\UP(P_i)$ is a black pentagonal cell, whose edges correspond to the square moves of $P_i$. The triangulation of white regions should be chosen to be the same for the five graphs-labels of $\UP(P_i)$, but it can be done in an arbitrary way. 
    \item If $P_i$ is a white decagonal cell, then $\UP(P_i)$ is a black decagonal cell. The triangulation of white regions for the labels of $\UP(P_i)$ should be chosen so that all the moves happen in a copy of $\phi(\pi{(5,3))}$-necklace, and outside of it the white regions are triangulated in the same (but arbitrary) manner.
    \item If $P_i$ is a white pentagonal cell, then $\UP(P_i)$ is a white decagonal cell, whose square moves correspond to the moves of $P_i$. The triangulation of white regions should be chosen so that all the moves happen in a copy of $\phi(\pi{(5,2)})$-necklace, and outside of it the white regions are triangulated in the same (but arbitrary) manner.
    \item If $P_i$ is a quadrilateral cell labeled by two commuting moves, then $\UP(P_i)$ is either a quadrilateral, or an edge, or a vertex, depending on how many of those moves were black trivalent. The triangulation of white regions is arbitrary but consistent.
    %\item If $P_i$ is an edge labeled as $G-G'$, then $\UP(P_i)$ is either a vertex (if the move $G-G'$ was black trivalent), or an edge (otherwise). The triangulation of white regions is again arbitrary but consistent.
    
\end{itemize}

Now we make an important adjustment to this lifting. All the graphs in the labels of the lifted polygons correspond to the necklace $\mc J = \UP(\DOWN(\mc I))$, which might not coincide with $\mc I$. If it does not, use Lemma~\ref{updownnecklace} to extend those graphs (in the same manner) so that they all correspond to the necklace $\mc I$. 

So far we lifted every $P_i$ separately. Now we glue them to one another and to $L$, using the gluing pattern of $\mb D$. The following steps should be done over all edges and all vertices of $\mb D$. The labels of the cells of $\UP(\mb D)$ will constitute the map $\UP(f)$. 

\begin{enumerate}
    \item Let $e$ be an edge of $\mb D$ not from $\partial |\mb D|$, and let $P$, $Q$ be the two adjacent polygonal $2$-cells. 
    
    \begin{itemize}
        %\item If $\UP(P)$ or $\UP(Q)$ is a degenerate polygon (a vertex), then we connect the vertices corresponding to $e$ in $\UP(P)$ and $\UP(Q)$ by a chain of white trivalent moves.
        %\item If $e$ is a black trivalent move, and both $\UP(P)$ and $\UP(Q)$ are non-degenerate, we do nothing.
        \item If $e$ is (labeled by) a black trivalent move (so that it gets contracted to a vertex after lifting), then we connect the vertices corresponding to $e$ in $\UP(P)$ and $\UP(Q)$ by a chain of white trivalent moves (possibly of zero length).
        \item If $e$ is (labeled by) a square move or a white trivalent move, there are edges corresponding to $e$ in both $\UP(P)$ and $\UP(Q)$, call them $p_1-p_2$ and $q_1-q_2$, respectively. Note that $p_1$ and $q_1$ differ by some white trivalent moves, and $p_2$ and $q_2$ differ by the same exact white trivalent moves. Let $p_1-a_1-...-z_1-q_1$ and $p_2-a_2-...-z_2-q_2$ be chains of white trivalent moves such that every pair $a_1-a_2$, $\ldots$, $z_1-z_2$ is related by the same move as the pairs $p_1-p_2$ and $q_1-q_2$ (this move arises from lifting $e$). This way we connect the edges $p_1-p_2$ and $q_1-q_2$ by a sequence of quadrilateral $2$-cells (possibly of zero length).
    \end{itemize}
    
    \item Let $e$ be a boundary edge from $\partial |\mb D|$, which belongs to a single polygonal $2$-cell $P$. We glue $\UP(P)$ to $L$ following the same scheme as in the previous item.
    %\begin{itemize}
        %\item If $\UP(P)$ is degenerate, then we connect the vertices corresponding to $e$ in $\UP(P)$ and $L$ by a chain of white trivalent moves.
        %\item If $e$ is a black trivalent move, and $\UP(P)$ is non-degenerate, we do nothing.
        %\item 
        If $e$ is a black trivalent move, we connect the vertices corresponding to $e$ in $\UP(P)$ and $L$ by a chain of white trivalent moves.
        %\item 
        If $e$ is a square move or a white trivalent move, there are edges corresponding to $e$ in both $\UP(P)$ and $L$, and we connect them by a sequence of quadrilateral cells in the same way as above.
    %\end{itemize}
    
    \item Let $e$ be a boundary edge of $\mb D$ not adjacent to any of the polygons (that is, a hanging edge or a bridge). Then $e$ is bypassed by $\DOWN(L)$ two times. We connect the two corresponding vertices in $L$ by a chain of white trivalent moves (is $e$ was black trivalent), or the two corresponding edges in $L$ by a strip of quadrilateral cells (is $e$ was black trivalent), just as above.
    
    \item Let $v$ be an internal (not lying on the boundary) vertex of $\mb D$. Let $Q_1, \ldots, Q_m$ be the polygonal $2$-cells in $\mb D$ sharing a common vertex $v$, indexed in the order they follow around $v$, so that $Q_i$ and $Q_{i+1}$ (cyclic indexing) share an edge. We already connected the polygons $\UP(Q_i)$ cyclically by strips of quadrilaterals and/or chains of white trivalent moves. Those strips/chains bound a loop whose all vertices match with $\UP(v)$, and differ only by white trivalent moves. Such a loop can be contracted in a cellular way as it follows from the base case $k=1$. That is, we can fill in this loop by a cellular contraction consisting of white pentagonal and quadrilateral cells.
    
     \item Let $v$ be a boundary vertex of $\mb D$. 
     %\begin{itemize}
        %\item If $v$ is adjacent to just one polygon $P_i$, we do nothing.
        %\item 
        %If $v$ is adjacent to at least two polygons $P_i$, then 
        Write down the sequence $Q_1, \ldots, Q_m$ of the adjacent to $v$ polygonal $2$-cells in the order they follow around $v$, including $L$ in this sequence every time the adjacent region is the outside of $|\mb D|$. Then proceed the same way as if $v$ was internal, by filling in a loop of white trivalent moves.
    %\end{itemize}
    
\end{enumerate}

%[This case analysis could be written shorter, by merging items 1-3 and 4-5]

The construction above forms a cellular contraction $\UP(f) : \UP(\mb D) \to \mb X_{\pi^:}$ of $L$, which finishes the proof. 
\end{proof}

We can consider any two reduced trivalent plabic graphs to be equivalent if they can be related by only white and black trivalent moves. Then there is a \emph{square flip graph}, whose vertices are equivalence classes of plabic graphs for each connectivity $\pi^:$, and whose edges connect equivalence classes of graphs which have a pair of representative elements related by a square move. By Theorem~\ref{connected}, the square flip graph is connected for every decorated permutation $\pi^:$. Our result can be restricted to the square flip graph as follows.

\begin{corollary}\label{sqmovecycles}
Let $\mb Y_{\pi^:}$ be the 2-complex given by the square flip graph for plabic graphs with connectivity $\pi^:$, with the following 2-cells glued to it:
\begin{itemize}
\item A quadrilateral, wherever there is a 4-cycle generated by two square moves occurring in separate parts of a plabic graph;
\item A pentagon, wherever there is a 5-cycle generated by five square moves which take place in a subgraph which forms a plabic graph with connectivity $\pi{(5,2)}$ or $\pi{(5,3)}$.
\end{itemize}
Then $\mb Y_{\pi^:}$ is simply connnected.
\end{corollary}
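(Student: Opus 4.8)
The plan is to realize $\mb{Y}_{\pi^:}$ as a ``collapse'' of $\mb{X}_{\pi^:}$ and transport contractions of loops along this collapse. First I would construct a continuous map $c : \mb{X}_{\pi^:} \to \mb{Y}_{\pi^:}$ that sends every (white or black) trivalent-move edge to a point. On vertices, $c$ sends a trivalent plabic graph $G$ to its trivalent-equivalence class $[G]$; this descends to edges because a square move $G - G'$ produces the edge $[G]-[G']$ of the square flip graph, and $[G]\neq [G']$ since a square move changes a face label while trivalent moves fix the plabic tiling and hence all face labels (as noted in Section~\ref{plabic}). To extend $c$ continuously over the $2$-cells of $\mb{X}_{\pi^:}$ one inspects each type: a pentagonal cell is a $5$-cycle of trivalent moves, so its boundary maps to a constant loop and $c$ squashes it to a point; a quadrilateral cell containing at least one trivalent move has boundary mapping to a loop of the form $[\sigma]\,[\sigma]^{-1}$ (or a constant), null-homotopic in the $1$-skeleton, so $c$ extends over it into the $1$-skeleton of $\mb{Y}_{\pi^:}$; a quadrilateral cell made of two commuting square moves in separate parts maps onto the corresponding quadrilateral $2$-cell of $\mb{Y}_{\pi^:}$; and, crucially, a decagonal cell --- a $10$-cycle of $5$ square moves alternating with $5$ trivalent moves inside a $\pi{(5,2)}$ or $\pi{(5,3)}$ subgraph --- collapses its five trivalent edges to leave exactly a $5$-cycle of five square moves inside such a subgraph, which is precisely one of the pentagonal $2$-cells glued into $\mb{Y}_{\pi^:}$. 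Thus $c$ is a well-defined continuous map.

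Next I would lift loops. Let $L : S^1 \to \mb{Y}_{\pi^:}$ be a loop; by cellular approximation we may take $L$ to be an edge-loop, i.e.\ a finite cyclic sequence of square moves returning to its starting class. Walk along $L$ while tracking an actual plabic graph: each time $L$ performs a square move between classes $A$ and $B$, realize it by first performing a chain of trivalent moves to reach a representative of $A$ where the square move is available, then performing that square move; when $L$ closes up we are back in the starting class, so close the path with one more chain of trivalent moves. This produces an edge-loop $\tilde L$ in $\mb{X}_{\pi^:}$, and since the inserted trivalent-move chains are collapsed by $c$, we have $c\circ\tilde L = L$ up to reparametrization.

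Finally, apply Theorem~\ref{plabicycles}: $\mb{X}_{\pi^:}$ is simply connected, so there is $g : D^2 \to \mb{X}_{\pi^:}$ with $g|_{\partial D^2} = \tilde L$. Then $c\circ g : D^2 \to \mb{Y}_{\pi^:}$ restricts on the boundary to $c\circ\tilde L = L$, so $L$ is null-homotopic in $\mb{Y}_{\pi^:}$; as $L$ was arbitrary, $\mb{Y}_{\pi^:}$ is simply connected. (If one wants the stronger ``cellular contraction'' form used elsewhere in the paper, one instead pushes the cellular contraction of $\tilde L$ from Theorem~\ref{plabicycles} through $c$, collapsing in the domain complex those cells that map into trivalent-move edges; the bookkeeping mirrors Step~2 of the proof of Theorem~\ref{plabicycles}.)

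The main obstacle I anticipate is the verification that $c$ is well defined --- specifically the two ``matching of glued cells'' checks: that every decagonal $2$-cell of $\mb{X}_{\pi^:}$ collapses onto a genuine pentagonal $2$-cell of $\mb{Y}_{\pi^:}$ (the five surviving square moves really do lie in a $\pi{(5,2)}$ or $\pi{(5,3)}$ subgraph and connect five distinct classes), and that quadrilateral cells carrying a trivalent edge collapse to something over which $c$ extends. Both reduce to the fact that trivalent moves fix the plabic tiling, hence all face labels, which is already part of the framework of Section~\ref{plabic}; once this is granted the remaining arguments are routine.
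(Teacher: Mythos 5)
Your proposal is correct and takes essentially the same approach as the paper: lift a loop in $\mb Y_{\pi^:}$ to a loop in $\mb X_{\pi^:}$ by inserting trivalent moves, contract it there using Theorem~\ref{plabicycles}, and observe that each 2-cell of $\mb X_{\pi^:}$ maps to either a point, an edge, or a 2-cell of $\mb Y_{\pi^:}$. You make the paper's terse argument more explicit by packaging the correspondence as a continuous collapsing map $c : \mb X_{\pi^:} \to \mb Y_{\pi^:}$ and checking its behavior cell by cell, but the underlying idea is identical.
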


\begin{proof}
Any loop $\gamma$ in $\mb Y_{\pi^:}$ can be extended to a loop $\gamma'$ in $X_{\pi^:}$ by adding the necessary extra white and black trivalent moves. Contract $\gamma'$ to a point step-by-step by moving it across the 2-cells in $\mb X_{\pi^:}$. Each 2-cell in $\mb X_{\pi^:}$ corresponds to either a point or a 2-cell in $\mb Y_{\pi^:}$, so $\gamma$ may also be continuously deformed while maintaining the correspondence between $\gamma$ and $\gamma'$. Then once $\gamma'$ has been deformed to a point, so has $\gamma$. Therefore $\mb Y_{\pi^:}$ is simply connected.
\end{proof}

\begin{comment}\begin{definition}[{\cite[Section~17]{Pos06}}]
An \emph{alignment} of a decorated permutation $\pi^:$ is a pair of indices $\{i,j\}$ such that both of the following hold
\begin{itemize}
    \item $i$ is a black fixed point, or $\pi(i) \in (i,\pi(j)]^{cyc}$
    \item $j$ is a white fixed point, or $\pi(j) \in (j,\pi(i)]^{cyc}$
\end{itemize}
\end{definition}

We will denote by $A(\pi^:)$ the number of alignments of $\pi^:$. Among permutations with $k$ anti-exceedences, the colorings of the identity permutation have the maximal number of alignments, $k(n-k)$, while the cyclic permutation $\pi(n,k)$ has zero alignments.
\end{comment}

%Fix any decorated permutation $\pi^:$, and let $\gamma = M_1M_2\cdots M_{m}$ be a loop in $\mb X_\pi^:$ connecting the plabic graphs $G_{\pi^:}^1,G_{\pi^:}^2,\ldots,G_{\pi^:}^{m+1} = G_{\pi^:}^1$. By Lemma~\ref{plabinclusion}, there exists $k\in [n]$ and plabic graphs $G_k^1,G_k^2,\ldots,G_k^{m+1} = G_k^1$ with connectivity $\pi(n,k)$ such that $\gamma$ is a cycle in $\mb X_{\pi(n,k)}$ as well. We will contract $\gamma$ to a point in $\mb X_{\pi(n,k)}$, but unfortunately this does not imply that it can be contracted in $\mb X_{\pi^:}$. %This proof is therefore incomplete, and the reader should see Remark~\ref{oops}.

\section{Triple Crossing Diagrams}\label{tcd}

Dylan Thurston~\cite{ThuD17} introduced \emph{triple crossing diagrams} as a generalization of the domino tilings and their flip operation. Just as the space of domino tilings is flip-connected~\cite{Thu90}, so is the space of (minimal) triple crossing diagrams with a given connectivity~\cite{ThuD17}. We will consider only what Thurston~\cite{ThuD17} calls \emph{minimal} triple crossing diagrams, defined as when introduced by Postnikov to study perfect orientations of plabic graphs~\cite{Pos06}.

\begin{definition}\label{tcdef}
Consider a disk with boundary vertices labeled $b_1,b_1',\ldots,b_n,b_n'$ in clockwise order. A \emph{triple crossing diagram} with connectivity (strand permutation) $\pi \in S_n$ consists of $n$ oriented strands drawn inside the disk which start at $b_i$ and end at $b_{\pi(i)}'$ for each $i \in [n]$, satisfying the following properties

\begin{enumerate}
\item Wherever two strands intersect, exactly three distinct strands meet in a \emph{triple crossing}.
\item When considered in cyclic order, the orientation of the six rays from any triple crossing alternates.
\item The diagram contains no \emph{bad double crossings}, defined as when two distinct strands both arrive at triple crossing $c_1$ followed by triple crossing $c_2$.
\end{enumerate}
\end{definition}

It follows from~\cite[Theorem 7]{ThuD17} that this definition is equivalent to Thurston's definition for minimal triple crossing diagrams.

Similar to plabic graphs, triple crossing diagrams on $n$ strands have a connectivity $\pi\in S_n$ given by the final positions of the strands. There is also a notion of flip in a triple crossing diagram, the $\tcd$ move, shown in Figure~\ref{tcdflipng}. Dylan Thurston~\cite[Theorem 5]{ThuD17} proved that all minimal triple crossing diagrams with the same connectivity can be related by a series of $\tcd$ moves.

\begin{figure}[ht]\centering
\includegraphics[width=6in]{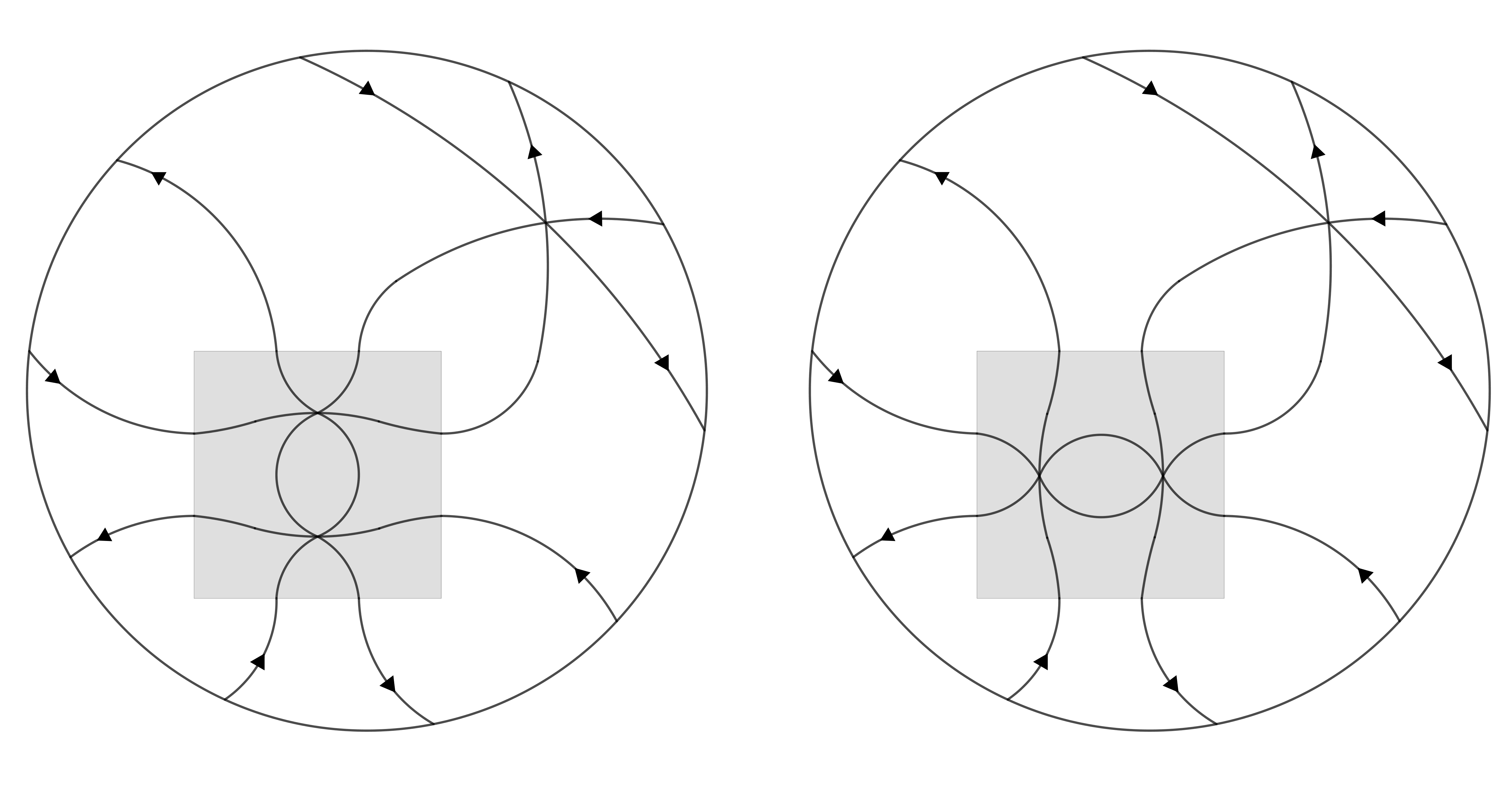}
\caption{A $\tcd$ move (shaded) in a triple crossing diagram}% with a counterclockwise interior region}
\label{tcdflipng}
\end{figure}

Postnikov gave the following correspondence between triple crossing diagrams and plabic graphs. For any triple crossing diagram $D$, the plabic graph $\phi(D)$ has white vertices corresponding to triple crossings in $D$, black vertices corresponding to regions bounded by counterclockwise-oriented strands that aren't in the middle of a possible $\tcd$ move, and edges corresponding to counterclockwise regions bordered by triple crossings and to triple crossing which could be involved in a $\tcd$ move together.

%{\color{red} [Notice how I fixed citation below]} -- thanks!
\begin{lemma}[{\cite[Lemma 14.4]{Pos06}}]
The map $\phi$ described above gives a bijection between triple crossing diagrams with strand connectivity $\pi$ and reduced plabic graphs for the connectivity $\pi$ (fixed points undecorated) with all white vertices trivalent and no edges with both endpoints black.
\end{lemma}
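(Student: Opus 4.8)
The plan is to construct an explicit inverse $\psi$ to $\phi$ and to verify that $\phi$ and $\psi$ respect the two classes of objects; the bijectivity then reduces to checking that $\phi\circ\psi$ and $\psi\circ\phi$ are identities, which is a local, vertex-by-vertex computation. Given a reduced plabic graph $G$ all of whose white vertices are trivalent, with no edge joining two black vertices and with all fixed points undecorated, I would build $D=\psi(G)$ by the local recipe dual to the one defining $\phi$: replace each white trivalent vertex by a triple crossing of three short strand-segments, one along each incident edge; replace each black vertex of degree $d$ by a small $d$-gon region with counterclockwise-oriented boundary whose $d$ sides run along its $d$ incident edges (well defined, since no edge is black--black, so each of these edges reaches a triple crossing); and for each edge of $G$ joining two white vertices, place the two triple crossings at its ends so that they sit in the standard $\tcd$ configuration, i.e.\ a counterclockwise bigon ``in the middle of a possible $\tcd$ move''. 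Concatenating the segments along the strands of $G$, read off by the rules of the road, produces the strands of $D$; their orientations are forced, since transporting an orientation along a strand and using that a sharp left at a white vertex becomes ``pass straight through a triple crossing'' while a sharp right at a black vertex becomes ``follow one side of a counterclockwise polygon'' makes the six rays at every triple crossing alternate. Since this recipe undoes, cell by cell, the recipe defining $\phi$, one gets $\phi(\psi(G))=G$.

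Conversely, I would check that $\phi(D)$ always lies in the stated restricted class when $D$ is a minimal triple crossing diagram. That $\phi(D)$ is a planar bicolored graph properly embedded in the disk with the correct boundary data is immediate from the construction; each white vertex is trivalent because a triple crossing has exactly three counterclockwise sectors; no edge of $\phi(D)$ joins two black vertices, since every edge abuts a triple crossing; and if $\pi(i)=i$ then the corresponding strand of $D$ is crossing-free and its boundary vertex in $\phi(D)$ is attached to an isolated white vertex, so the fixed point is undecorated. For reducedness in the sense of Definition~\ref{Reduced}: tracing a strand of $D$ while translating ``through a triple crossing'' into a sharp left at a white vertex and ``along a counterclockwise region'' into sharp rights around a black vertex shows $\pi_{\phi(D)}=\pi_D=\pi$ and that each interior edge is traversed by exactly two strands; and a hypothetical bad double crossing in $\phi(D)$, namely two strands traversing an edge $e_1$ and later an edge $e_2$, pulls back under this translation to two strands of $D$ meeting at a common triple crossing and later at another, i.e.\ a bad double crossing in $D$, contradicting minimality. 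The same translation also yields $\psi(\phi(D))=D$, so $\phi$ and $\psi$ are mutually inverse bijections.

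The step I expect to be the main obstacle is the orientation and consistency analysis inside $\psi$: after inflating the black vertices into counterclockwise polygons and opening the white--white edges into $\tcd$ sites, one must show that the rules-of-the-road strands of $G$ can be coherently oriented so that \emph{every} crossing is a genuine alternating triple crossing and no new bad double crossing appears — this is where the hypotheses ``all white vertices trivalent'' and ``no black--black edge'' are essentially used, together with the three conditions of reducedness of $G$. The rest is routine unwinding of the definitions of $\phi$ and of the rules of the road, but the degenerate cases deserve care: isolated white vertices arising from fixed points, bigon faces that should become $\tcd$ sites rather than black vertices, and the bookkeeping of which boundary arcs of $D$ carry the boundary vertices of $\phi(D)$.
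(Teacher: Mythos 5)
This lemma is not proved in the paper; it is cited verbatim from Postnikov \cite[Lemma~14.4]{Pos06}, so there is no in-paper argument to compare against. Your outline — build an explicit local inverse $\psi$ and check $\phi\circ\psi = \mathrm{id}$ and $\psi\circ\phi = \mathrm{id}$ cell by cell — is the natural route and is in the spirit of Postnikov's own treatment.

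That said, as written this is a plan rather than a proof, and the gap is exactly where you flag it. The load-bearing claim is that after replacing black vertices of $G$ by small counterclockwise polygons, white trivalent vertices by triple crossings, and white--white edges by $\tcd$ bigons, the resulting curves admit a \emph{single coherent orientation} making every crossing a genuine alternating triple crossing and creating no bad double crossing; you state this ``is where the hypotheses are essentially used'' but do not carry out the local case analysis. Concretely, one needs to verify (i) that transporting orientation along a strand through a white triple-crossing site and around a black-vertex polygon never produces a contradiction (this is where ``no black--black edge'' and trivalence of white vertices enter), and (ii) that the reducedness conditions on $G$ rule out parallel bigons that would create a bad double crossing in $\psi(G)$. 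On the other direction, ``each interior edge of $\phi(D)$ is traversed by exactly two strands'' and the degenerate bookkeeping around fixed points and bigon faces are asserted but not shown. None of the steps looks wrong, but until these verifications are written out you have an outline of the standard argument rather than a proof of the lemma.
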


Such plabic graphs can be considered to be trivalent plabic graphs where the configuration of the edges between black vertices is arbitrary (choose any sequence of uncontraction moves on the black vertices with degree more than three). We observe that the flips in the two contexts correspond nicely

\begin{lemma}\label{tcdflips}
Let $D$ and $D'$ be triple crossing diagrams related by a single $\tcd$ move in $D$. Then $\phi(D)$ and $\phi(D')$ are related by a square move and several black contraction/uncontraction moves if the interior region of the $\tcd$ move was oriented clockwise, otherwise they are related by a single white trivalent move. Conversely, if $G$ and $G'$ are reduced plabic graphs with all white vertices trivalent, and no edges with both endpoints black which are related by a single white trivalent move or a square move and several black contraction/uncontraction moves, then $\phi^{-1}(G)$ and $\phi^{-1}(G')$ are related by a single $\tcd$ move.
\end{lemma}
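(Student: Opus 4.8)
The plan is to unwind the definition of the map $\phi$ locally around the region where the $\tcd$ move takes place, and to check that the effect of $\phi$ on that local picture is exactly the claimed plabic move. The key observation is that $\phi$ is defined purely in terms of local data (triple crossings become white vertices; counterclockwise-oriented regions become black vertices or edges; pairs of crossings that could form a $\tcd$ move become edges), so both the $\tcd$ move and the plabic moves in question are supported on a bounded patch of the diagram, and everything outside that patch is carried over unchanged by $\phi$. Hence it suffices to compare the two sides cell-by-cell on a small disk.

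First I would set up the two cases according to the orientation of the interior region of the $\tcd$ move, exactly as in Figure~\ref{tcdflipng}. \emph{Case 1: the interior region is oriented clockwise.} Here the three strands of the $\tcd$ move surround a clockwise triangle, so none of its three bounding sub-arcs is itself a counterclockwise region; instead the two triple crossings on the ``before'' side and the two on the ``after'' side are each a pair of crossings that could participate in a $\tcd$ move, so $\phi$ turns them into edges joining the corresponding white vertices. Drawing the local $\phi$-image of both the ``before'' and ``after'' diagrams, one sees two white vertices joined to their four neighbors, and the $\tcd$ move flips which diagonal of the resulting quadrilateral of white vertices is present — i.e.\ precisely a square move, after possibly contracting or uncontracting at the black vertices adjacent to the four corners (those black vertices may have higher degree, and the uncontraction choices are what the phrase ``several black contraction/uncontraction moves'' accounts for). \emph{Case 2: the interior region is oriented counterclockwise.} Then the central triangle of the $\tcd$ move is a counterclockwise region, so $\phi$ assigns it either a black vertex or an edge; tracing through, the local $\phi$-image is a single white trivalent vertex whose three incident edges get rerouted by the move, which is exactly a white trivalent move $(M1)$ or $(M3)$. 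In each case I would verify that $\phi$ of the modified diagram is again a reduced plabic graph with all white vertices trivalent and no black-black edge — but this is automatic, since it lies in the image of $\phi$ by Postnikov's lemma and the move preserves reducedness and strand connectivity (Theorem~\ref{connected} and the discussion after Definition~\ref{tcdef}).

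For the converse, I would use the fact that $\phi$ is a bijection (Postnikov's Lemma~14.4 quoted above): given $G, G'$ related by a white trivalent move, or by a square move together with black contraction/uncontraction moves, set $D = \phi^{-1}(G)$ and $D' = \phi^{-1}(G')$. The black contraction/uncontraction moves do not change $\phi^{-1}$ at all (they correspond to choices of how to resolve a multivalent black vertex, which $\phi$ collapses), so $D$ and $D'$ differ only through the square move or white trivalent move. Running the local analysis of the forward direction in reverse — i.e.\ observing that the local $\phi$-preimage of a square-move patch is precisely the clockwise $\tcd$ patch, and the local $\phi$-preimage of a white-trivalent-move patch is precisely the counterclockwise $\tcd$ patch — shows that $D$ and $D'$ differ by a single $\tcd$ move.

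The main obstacle I anticipate is bookkeeping rather than conceptual: carefully matching up the local pictures so that the vertices, edges, and faces of $\phi(D)$ and $\phi(D')$ correspond correctly, and confirming that the ``several black contraction/uncontraction moves'' in Case~1 exactly absorb the ambiguity in how $\phi$ resolves multivalent black vertices (since the square move as drawn in Figure~\ref{plabicmovespng} acts on trivalent-looking patches, but the black vertices bounding the $\tcd$ move need not be trivalent). Making the case split clean — in particular verifying that the only two possibilities for the orientation of the interior region are clockwise and counterclockwise, with no degenerate intermediate cases — relies on condition~(2) in Definition~\ref{tcdef} (alternating orientations at each triple crossing), which I would invoke to pin down the orientations of all six rays at the crossings involved in the move.
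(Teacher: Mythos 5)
Your proposal is correct and takes essentially the same approach as the paper: the paper's entire proof reads ``Examine how $\phi$ transforms $\tcd$ moves and $\phi^{-1}$ transforms plabic moves locally,'' and your case analysis (clockwise vs.\ counterclockwise interior region, local comparison of the $\phi$-images, bijectivity for the converse) is exactly a spelled-out version of that local examination.
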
\begin{proof}
Examine how $\phi$ transforms $\tcd$ moves and $\phi^{-1}$ transforms plabic moves locally.
\end{proof}

Dylan Thurston~\cite{ThuD17} conjectured the following, which we now prove as a theorem

\begin{theorem}\label{tcdconj}
Let $\mb T_\pi$ be the 2-complex given by the flip graph of triple crossing diagrams with connectivity $\pi$, with the following $2$-cells glued to it (cf. \cite[Figure 4]{ThuD17}) \begin{itemize}
\item A quadrilateral, wherever two flips are commuting in different parts of the diagram;
\item A pentagon, wherever there is a 5-cycle taking place in a subset of the diagram which is a triple crossing diagram with connectivity $\pi{(5,1)}$ or $\pi{(5,3)}$;
\item A decagon, wherever there is a 10-cycle taking place in a subset of the diagram which is a triple crossing diagram with connectivity $\pi{(5,2)}$.
\end{itemize}
Then $\mb T_\pi$ is simply connected for all permutations $\pi$.
\end{theorem}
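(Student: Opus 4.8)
The plan is to deduce Theorem~\ref{tcdconj} from our main result Theorem~\ref{plabicycles} by transporting the 2-complex structure through Postnikov's bijection $\phi$ of Lemma~\ref{tcdflips}. The key point is that triple crossing diagrams $D$ with connectivity $\pi$ correspond, via $\phi$, exactly to the reduced plabic graphs for $\pi$ (fixed points undecorated) that are trivalent at every white vertex and have no all-black edge; up to black contraction/uncontraction these are precisely the equivalence classes of trivalent reduced plabic graphs for $\pi$ under black trivalent moves only. So the flip graph $\mb T_\pi$ sits naturally as a ``black-contracted'' quotient of the flip graph $F_{\pi}$, in a way parallel to how the square flip graph (Corollary~\ref{sqmovecycles}) is the quotient by all trivalent moves.

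First I would make this precise: define a graph homomorphism from $F_{\pi}$ onto (a graph isomorphic to) $\mb T_\pi$ by collapsing each black trivalent move to a vertex identification and sending each white trivalent move and each square move to a $\tcd$ move, using Lemma~\ref{tcdflips}. Then, exactly as in the proof of Corollary~\ref{sqmovecycles}, I would take an arbitrary loop $\gamma$ in $\mb T_\pi$, lift it to a loop $\gamma'$ in $\mb X_{\pi}$ (the complex of Theorem~\ref{plabicycles}) by inserting the necessary black trivalent moves — valid since $F_\pi$ is connected — contract $\gamma'$ in $\mb X_{\pi}$, and check that each 2-cell of $\mb X_{\pi}$ maps to a point or a 2-cell of $\mb T_\pi$. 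Concretely: a quadrilateral (two commuting moves) maps to a quadrilateral, a point, or an edge of $\mb T_\pi$ depending on how many of the two moves were black trivalent; a white pentagonal cell — five white trivalent moves in a $\pi(5,1)$-subgraph — maps to a pentagon of $\mb T_\pi$ in a $\pi(5,1)$ triple crossing subdiagram; a black pentagonal cell (five black trivalent moves) collapses to a point; a white decagonal cell ($\pi(5,2)$, five square moves alternating with five white trivalent moves) maps to a decagon of $\mb T_\pi$ in a $\pi(5,2)$ subdiagram; and a black decagonal cell ($\pi(5,3)$, five square moves alternating with five black trivalent moves) maps, after collapsing the black trivalent edges, to a pentagon of $\mb T_\pi$ in a $\pi(5,3)$ subdiagram — which is exactly why the $\pi(5,3)$ pentagon appears in the statement. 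Deforming $\gamma'$ to a point in $\mb X_\pi$ therefore deforms $\gamma$ to a point in $\mb T_\pi$.

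The main obstacle, and the step deserving the most care, is the bookkeeping of the black contraction/uncontraction moves: a single $\tcd$ move with a clockwise interior region corresponds, per Lemma~\ref{tcdflips}, to a square move \emph{plus several} black contraction/uncontraction moves, so the lift $\gamma'$ of an edge of $\gamma$ is in general a short path in $F_\pi$, not a single edge, and one must verify that the choices of intermediate uncontractions can be made coherently around each 2-cell so that the images of the $\mb X_\pi$-cells listed above genuinely close up. This is precisely the kind of consistency one gets from the base case $k=1$ (loops of trivalent moves bounding cellular contractions by white pentagons and quadrilaterals), applied now to the black side; since the argument only uses that black trivalent moves among themselves satisfy the associahedron relations, it goes through by the same reasoning as in Step~2 of the proof of Theorem~\ref{plabicycles}. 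I would also need the elementary observation that a $\pi(5,3)$ triple crossing subdiagram is the $\phi$-image of a $\pi(5,3)$ plabic subgraph with its black vertices uncontracted, so the black decagonal cell of $\mb X_\pi$ really does live over it; this follows by inspecting the cross-sections in Figure~\ref{plabicyclepng} together with Lemma~\ref{tcdflips}. With these points checked, the conclusion that $\mb T_\pi$ is simply connected for all $\pi$ is immediate.
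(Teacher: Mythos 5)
Your proposal is correct and follows essentially the same route as the paper: lift a loop of $\tcd$ moves to a loop in $\mb X_\pi$ via $\phi$ and Lemma~\ref{tcdflips}, contract it there by Theorem~\ref{plabicycles}, and observe that each $2$-cell of $\mb X_\pi$ pushes down to a point, edge, or $2$-cell of $\mb T_\pi$. Your explicit enumeration of how each cell type maps down, and your flagging of the coherence of the intermediate black contraction/uncontraction choices around each $2$-cell, is a useful elaboration of details the paper's proof leaves implicit, but the strategy is identical.
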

\begin{proof}
Let $\gamma$ be a cycle $D_1,D_2,\ldots,D_{m+1} = D_1$ of triple diagrams with connectivity $\pi$ related by $\tcd$ moves. Then let $\phi(\gamma)$ be the cycle $G_1,G_2,\ldots,G_{M+1} = G_1$ of plabic graphs in $\mb X_{\pi}$ constructed by Lemma~\ref{tcdflips} which contains $\phi(D_1),\phi(D_2),\ldots,\phi(D_{m+1}) = \phi(D_1)$ in that order. By Theorem~\ref{plabicycles}, $\phi(\gamma)$ can be continuously deformed to a point in $\mb X_\pi$ by moving it across the cells in $X_\pi$. By construction of $\mb T_\pi$ and Lemma~\ref{tcdflips}, the cells in $X_\pi$ correspond to either points or cells in $\mb T_\pi$. In particular, if $\phi(\gamma)'$ is a deformation of $\phi(\gamma)$ from moving across a cell in $X_\pi$, then there is a (possibly trivial) cell in $\mb T_\pi$ which $\gamma$ can be moved across to create $\gamma'$ such that $\phi(\gamma') = \phi(\gamma)'$. Then after deforming $\phi(\gamma)$ to a point in $\mb X_\pi$ while doing the corresponding deformations to $\gamma$, the cycle $\gamma$ must also have been deformed to a point. Therefore $\mb T_\pi$ is simply connected.
\end{proof}

\section{Generalized Baues Problem}\label{baues}

Recall the model example of a flip graph, whose vertices are the triangulations of a convex polygon, and whose edges are the flips between triangulations, defined by flipping a diagonal in any quadrilateral. The relations between flips are understood well in this case (see Appendix below). One can also consider the relations between relations, and so on, and all those higher relations lead to the classical construction of Stasheff's associahedron, which captures in a sense the ``topology of triangulations''. A different way to study this topology comes from considering the map $p: C(n,d) \to C(n,2)$ projecting the $d$-dimensional cyclic polytope onto the convex $n$-gon. The triangulations of $C(n,2)$ arise as the homeomorphic images of certain two-dimensional subcomplexes of $C(n,d)$, and the associahedron can be recovered in this interpretation as the Minkowski average of the fibers of the map $p$, as introduced in~\cite{BilStu92}. We can go further and consider coarser subdivisions of $C(n,2)$ as well, in a sense induced by the map $p$, then form a poset out of them, and ask questions about its topology. The \emph{generalized Baues problem} is such a question. It was posed in~\cite{BilKapStu94}, for arbitrary linear maps of polytopes, as a generalization of original Baues's question from~\cite{Baues80}, where the target was one-dimensional. Even though in general the GBP-conjecture is false~\cite{RamZie96}, there are positive results in some special cases. The GBP for $p: C(n,d) \to C(n,2)$ was solved affirmatively in~\cite{RamSan00}; the solution for the GBP for the zonotopal tilings, arising from the projection of a hypercube, follows from the results of~\cite{StuZie93}.

In this section, we interpret Theorem~\ref{plabicycles} in terms of the GBP for the cyclic projection of a hypersimplex. We pose the relevant special case of the GBP, following~\cite{Pos18}. The hypersimplex $\Delta_{kn}$ is the $k$-th cross-section of the $n$-dimensional hypercube:
$$
\Delta_{kn} = \left\{(x_1, \ldots, x_n) \in \mathbb{R}^n ~\middle\vert~ 0 \le x_i \le 1, \sum x_i = k\right\}.
$$
%where $\square_n = \left\{(x_1, \ldots, x_n) \in \mathbb{R}^n ~\vert~ 0 \le x_i \le 1\right\}$.

If we affinely project the $n$-hypercube onto $\mathbb{R}^3$ so that the image is the cyclic zonotope $Z(n,3)$ (this is achieved by sending the $i$-th basis vector to $v_i \in \mathbb{R}^3$, where the $v_i$ are as in Definition~\ref{cyczonotope}) then the hypersimplex $\Delta_{kn}$ goes to the $k$-th horizontal layer of $Z(n,3)$, which we call $Q_{kn}$:
$$
Q_{kn} = Z(n,3) \cap \{(k,x_2,x_3) \in \mathbb{R}^3\}.
$$
The affine projection $\pi: \Delta_{kn} \to Q_{kn}$ gives rise to a family of \emph{$\pi$-induced subdivisions} of $Q_{kn}$. By definition, a subdivision $\Sigma$ of $Q_{kn}$ into convex polygons is $\pi$-induced if there is a polyhedral subcomplex $\mathfrak{S}$ of $\Delta_{kn}$ such that $\pi$ establishes an isomorphism between $\mathfrak{S}$ and $\Sigma$, as polyhedral complexes.
The $\pi$-induced subdivisions form a poset, called the \emph{Baues poset} and denoted by $\omega(k,n,2)$ in~\cite{Pos18}, whose order relation comes from the inclusion relation on the subcomplexes $\mathfrak{S}$ of $\Delta_{kn}$. The minimal elements of $\omega(k,n,2)$ correspond to the finest $\pi$-induced subdivisions, arising from the subcomplexes of the $2$-skeleton of $\Delta_{kn}$ that get projected down onto $Q_{kn}$ homeomorhically. We readily recognize those subdivisions as the tilings dual to the trivalent plabic graphs of connectivity $\pi(n,k)$~\cite{Gal17}. The maximal element $\hat{1}$ of $\omega(k,n,2)$ is unique and corresponds to the trivial subdivision of $Q_{kn}$, consisting of a single 2-face. The GBP in this case asks the following.

\begin{question}[Special case of~{\cite[Problem~10.3]{Pos18}}]\label{gbp}
Does the poset $\omega(k,n,2) - \hat 1$ (the Baues poset with the maximal element removed) have the homotopy type of the $(n-4)$-dimensional sphere? \footnote{While this paper was under review, this question was answered in the affirmative~\cite[Theorem~1.1]{olarte2019} by Olarte and Santos.}
\end{question}

Another part of~{\cite[Problem~10.3]{Pos18}} asks whether the poset $\omega(k,n,2)$ coincides with another poset of interest, arising from the zonotopal tilings of $Z(n,3)$. Every zonotopal tiling $\Delta$ (not necessarily a fine one) induces a tiling of $\Delta_{kn}$ in its $k$-th horizontal section. Those tilings of $\Delta_{kn}$, ordered by refinement, form the poset of \emph{lifting} subdivisions of $\Delta_{kn}$, denoted as $\omega_{\lift}(k,n,2)$. After Theorem 11.7 in~\cite{Pos18}, Postnikov asks if $\omega_{\lift}(k,n,2)$ coincides with $\omega(k,n,2)$. After an auxiliary lemma, we prove that this is indeed so.

\begin{lemma}\label{corner} 
Let $\Sigma$ be a plabic tiling of the domain inside a necklace $\mc I$ of cyclic connectivity $\phi(\mc I) = \pi(d,k)$. If $k\neq 1$ then no white region of $\Sigma$ has three vertices in common with $\mc I$. If $k\neq d-1$ then no black region of $\Sigma$ has three vertices in common with $\mc I$.
\end{lemma}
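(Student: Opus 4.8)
The plan is to reduce the statement to an elementary combinatorial fact about cyclic intervals. Identify $[d]$ with the cyclic group $\mathbb{Z}/d$. Since $\phi(\mc I) = \pi(d,k)$, the necklace $\mc I$ is the interval necklace $I_j = \{j, j+1, \ldots, j+k-1\}$, so every vertex of $\Sigma$ lying on $\mc I$ is a cyclic interval of length $k$. Recall next that the vertex labels of a white region of a plabic tiling all contain a common $(k-1)$-element set $S$, namely their intersection, whereas the vertex labels of a black region are all contained in a common $(k+1)$-element set $T$, namely their union. Hence a white region with intersection-set $S$ meets $\mc I$ precisely in those of its vertices of the form $S \cup \{c\}$ which are cyclic intervals. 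Passing to complements $v \mapsto [d] \setminus v$, which interchanges white and black regions, replaces $k$ by $d-k$, and sends length-$k$ cyclic intervals to length-$(d-k)$ ones, turns the black statement for $\pi(d,k)$ into the white statement for $\pi(d,d-k)$. So it suffices to prove that for $k \ge 2$ no white region meets $\mc I$ in three vertices; the cases $k \in \{0,d\}$ are trivial, $\Sigma$ being a single vertex then.

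The core is the following counting lemma, which I would establish first: for $2 \le k \le d$ and every $(k-1)$-subset $S \subseteq \mathbb{Z}/d$, at most two elements $c \in [d] \setminus S$ make $S \cup \{c\}$ a cyclic interval. Let $g$ be the number of maximal cyclically contiguous blocks of $S$, so $S$ is a cyclic interval iff $g \le 1$, and note $g \ge 1$ because $|S| = k-1 \ge 1$. Adjoining one element to $S$ lowers $g$ by at most one, and by exactly one only when that element is the unique point of a gap of size one, in which case it merges the two blocks flanking that gap. Therefore: if $g \ge 3$ no such $c$ exists; if $g = 2$ a valid $c$ must fill a size-one gap, of which there are at most two; and if $g = 1$ a valid $c$ must be one of the at most two neighbors of the single block $S$. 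In all cases there are at most two valid $c$. Consequently, if $W$ is a white region with intersection-set $S$, every vertex of $W$ equals $S \cup \{c\}$ for some $c \notin S$, so at most two of them are cyclic intervals, i.e. lie on $\mc I$. This gives the white statement, and the black one follows by the complementation above, using $k \ne d-1$ to ensure $d-k \ge 2$. The hypotheses are sharp: when $k = 1$ one has $S = \emptyset$ and every singleton is a cyclic interval, and dually for $k = d-1$.

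The step I expect to need the most care is not the combinatorics but pinning down the structural inputs: that a white (resp.\ black) region of a plabic tiling --- obtained from a plabic triangulation by deleting the edges between equal-colored triangles --- really does have all its vertex labels sharing a common $(k-1)$-set (resp.\ lying inside a common $(k+1)$-set), and that the $\pi(d,k)$-necklace coincides with the interval necklace. The former I would prove by propagating through each monochromatic region: two white triangles sharing an edge $v-w$ have the same associated $(k-1)$-set, namely $v \cap w$, since adjacent labels in a plabic triangulation differ in exactly two coordinates and hence $|v \cap w| = k-1$; so this set is constant along any connected white region, and dually for black regions. The latter fact is classical. Granting these, the lemma is exactly the block-counting sketched above.
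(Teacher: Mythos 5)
Your proposal is correct and follows essentially the same approach as the paper's: identify the $\pi(d,k)$-necklace with the cyclic intervals of length $k$, note that all labels of a white region have the form $S\cup\{c\}$ for a common $(k-1)$-set $S$, and observe that at most two such extensions can be cyclic intervals when $k\ge 2$, with the black case dual. The paper compresses the counting step to ``one readily sees that $k=1$''; your block-counting argument simply spells that step out.
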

\begin{proof}
Recall that $\phi(\mc I) = \pi(d,k)$ means that $\mc I$ consists of the cyclic (modulo $d$) shifts of $\{1,\ldots, k\} \subset [d]$. If $S\cup\{a\}, S\cup\{b\}, S\cup\{c\}$ are three sets in $\mc I$ bounding a white region, one readily sees that $k=1$. The second half of the lemma follows similarly.
\end{proof}

\begin{lemma}\label{lift}
$\omega(k,n,2) = \omega_{\lift}(k,n,2)$.
\end{lemma}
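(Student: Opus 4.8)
The plan is to prove both inclusions $\omega_{\lift}(k,n,2) \subseteq \omega(k,n,2)$ and $\omega(k,n,2) \subseteq \omega_{\lift}(k,n,2)$. The first inclusion should be essentially immediate: given any zonotopal tiling $\Delta$ of $Z(n,3)$ (not necessarily fine), its $k$-th horizontal cross-section is a $\pi$-induced subdivision of $Q_{kn}$, since the cross-section of the polyhedral complex $\Delta$ lifts homeomorphically via $\pi$ to a subcomplex of the hypercube sitting over $Q_{kn}$, and intersecting with the hypersimplex $\Delta_{kn}$ gives the required subcomplex $\mathfrak S$. So the real content is the reverse inclusion: every $\pi$-induced subdivision of $Q_{kn}$ arises as the cross-section of some zonotopal tiling of $Z(n,3)$.

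For the reverse inclusion, I would argue by refinement. Let $\Sigma$ be any $\pi$-induced subdivision of $Q_{kn}$. First refine $\Sigma$ to a finest $\pi$-induced subdivision $\widehat\Sigma$; by the discussion preceding Question~\ref{gbp}, $\widehat\Sigma$ is the plabic tiling (triangulation) dual to a trivalent reduced plabic graph $G$ of connectivity $\pi(n,k)$, and by Galashin's Theorem~\ref{galashin} there is a fine zonotopal tiling $\Delta$ of $Z(n,3)$ with $\Sigma_k(\Delta) = \widehat\Sigma$. Now I need to "coarsen" $\Delta$ in three-space so that its $k$-th cross-section becomes exactly $\Sigma$ rather than $\widehat\Sigma$. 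The key observation is that the cells of $\widehat\Sigma$ that need to be merged to obtain $\Sigma$ are monochromatic convex polygonal regions (each such region is a union of same-colored triangles), because whenever two triangles of $\widehat\Sigma$ sharing an edge have opposite colors, that edge is forced in any $\pi$-induced subdivision — it corresponds to a genuine $2$-face of the lift, and erasing it would make the lift non-injective. This is exactly where Lemma~\ref{corner} enters, or rather its philosophy: monochromatic regions behave like the interiors of a single cube face (white) or its antipode, so they can be un-triangulated "vertically."

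Concretely, for each maximal monochromatic region $R$ of $\widehat\Sigma$ that gets merged in $\Sigma$, say $R$ is white, all the tiles of $\Delta$ lying directly above $R$ (between heights $k$ and $k+1$ at those columns) together with the triangulated prism over $R$ form a sub-zonotope isomorphic to a product of $R$ with an interval; replacing that triangulated prism by the single un-triangulated zonotopal cell over $R$ yields a coarser zonotopal tiling $\Delta'$ of $Z(n,3)$, still satisfying conditions (1) and (2) of Definition~\ref{tiling} (though now $|X^0|$ may exceed $3$, which is allowed since $\Delta'$ need not be fine). Doing this simultaneously over all merged regions — checking the local compatibility of the coarsenings on overlapping columns, which is the one genuinely fiddly point — produces a zonotopal tiling whose $k$-th cross-section is $\Sigma$ by construction. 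Hence $\Sigma \in \omega_{\lift}(k,n,2)$, and the two posets coincide as sets; that the order relations (refinement, induced from inclusion of the lifting subcomplexes) agree is then routine, since both are the refinement order on subdivisions of $Q_{kn}$.

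The main obstacle I expect is the step of showing that the coarsenings over distinct monochromatic regions can be performed consistently to give a single globally well-defined zonotopal tiling of $Z(n,3)$: one must verify that the "vertical un-triangulations" over adjacent merged regions do not conflict along shared vertical walls, and that the resulting complex still satisfies the face-intersection axiom~(2). I would handle this by working one cube-face-class at a time and using the fact that $\Delta$, being fine, has a clean local product structure over each monochromatic region — essentially the content underlying Lemma~\ref{compatible} and Galashin's Corollary~4.4 — so that the coarsenings are "independent" in the appropriate sense.
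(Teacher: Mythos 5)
Your proposal goes wrong at its central claim that ``the cells of $\widehat\Sigma$ that need to be merged to obtain $\Sigma$ are monochromatic convex polygonal regions.'' This is false. The cells of a $\pi$-induced subdivision of $Q_{kn}$ are projections of faces $\Delta_{h,d}$ of the hypersimplex, corresponding to vertices of a Grassmannian graph with degree $d$ and helicity $h$; the helicity $h$ can be anything in $\{1,\ldots,d-1\}$, not just $1$ or $d-1$. The simplest counterexample is a vertex of degree $4$ and helicity $2$, dual to the projection of the octahedron $\Delta_{2,4}$: a perfectly valid $\pi$-induced subdivision collapses this quadrilateral into a single cell, yet its finest refinement consists of four triangles meeting at the interior vertex ($\{1,3\}$ or $\{2,4\}$) in alternating black--white order. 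Erasing a black--white edge does make that lift non-injective, but non-injectivity of the lift is exactly what non-finest subdivisions permit (they lift to higher-dimensional faces of $\Delta_{kn}$), so your reason for the claim does not apply. Because of this, the strategy of ``un-triangulating vertically over each white prism'' does not even account for all the cells you would need to merge.

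Even restricting attention to the genuinely monochromatic cells, the ``vertical un-triangulation'' step is not the formality you make it out to be, and this is where the actual work of the paper's proof lies. For a white $d$-gon $R$ at height $k$ to be replaceable by a single tile $\tau_X \cong Z(d,3)$, the fine tiling $\Delta$ must already restrict to a fine tiling of $\tau_X$, i.e.\ every tile of $\Delta$ meeting the interior of $\tau_X$ must lie entirely inside $\tau_X$. An arbitrary fine tiling supplied by Galashin's Theorem~\ref{galashin} with $\Sigma_k$ dual to $\widehat\Sigma$ need not have this property: at heights $k\pm 1,k\pm 2,\ldots$ the white and black regions of $\Sigma_{k\pm 1}$ may straddle the boundary of $\tau_X$'s cross-section, so its triangulation need not respect that boundary, and then tiles of $\Delta$ ``leak'' across $\partial\tau_X$. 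The paper does not take an arbitrary $\Delta$ and coarsen it; it \emph{constructs} $\Delta$ layer by layer, starting from $\Sigma_k$, choosing each next layer's triangulation so that the cross-sections $\Sigma_i$ of the various $Z(d_i,3)$'s are respected. Lemma~\ref{corner} is used there not as a color dichotomy but to guarantee that two distinct $\Sigma_i,\Sigma_{i'}$ cannot overlap at any intermediate layer, which is what makes the independent per-tile choices of triangulation globally consistent. Your sketch acknowledges a ``genuinely fiddly point'' about compatibility of overlapping columns, but the fix you gesture at (a ``clean local product structure'') is precisely what fails in general and needs this inductive argument; without it, the coarsened object is not a zonotopal tiling, so the proof as proposed does not go through.
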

\begin{proof}
Fix a Grassmannian graph $G$ with helicity $k$ on $n$ boundary vertices. It suffices to show that it is realized as the $k$-th cross-section of some (not fine) tiling of $Z(n,3)$. Galashin~\cite{Gal17} proved this for $G$ plabic, and Lemma~\ref{fixing} in the Appendix implies it when $G$ is almost plabic. Say $v_i$ are the vertices of $G$, each with some helicity $k_i$ and degree $d_i$. Choose arbitrary plabic graphs $G_i$ with connectivity $\pi(d_i,k_i)$, for each $i$. Then we can form a plabic graph $G'$ by replacing each vertex $v_i$ in $G$ with $G_i$. %Let $\Delta$ be a fine zonotopal tiling of $Z(n,3)$ which has $G'$ as its $k$-th cross-section.

Each subgraph $G_i$ is the $k_i$-th cross-section of some fine tiling of $Z(d_i,3)$. We will construct  a fine zonotopal tiling $\Delta$ which contains all of these fine tilings $Z(d_i,3)$, by describing its cross-sections layer-by-layer. In particular, it will follow that the $Z(d_i,3)$ do not overlap. Start with the $k$-th layer, which ought to contain the plabic graph $G'$. Suppose for induction that we have described the $j$-th layer (and it agrees with corresponding layers of the $Z(d_i,3)$), but we have not described the $(j+1)$-st layer. By Lemma~\ref{compatible}, the $(j+1)$-st layer is determined by the $j$-th layer, up to the triangulation of the white regions. Then all of the vertices of the $Z(d_i,3)$ tilings will appear as needed in layer $j$ no matter what, but some of the edges may depend on the triangulation. 

Let $\Sigma_i$ be the intersection of $Z(d_i,3)$ with the $(j+1)$-st layer. We only look at the non-trivial $\Sigma_i$ (non-empty, not a single point). Every such $\Sigma_i$ is a convex $d_i$-gon. We want the boundary of $\Sigma_i$ to be respected by the triangulation that we choose. The black triangles in layer $(j+1)$ are in one-to-one correspondence with the white triangles in layer $j$. It follows that the overlap $\Sigma_i \cap \Sigma_{i'}$, for  $i \neq i',$ is white, if non-empty (otherwise $Z(d_i,3)$ and $Z(d_{i'},3)$ have an overlap in the $j$-th layer). Then Lemma~\ref{corner} implies that $\Sigma_i$ and $\Sigma_{i'}$ cannot overlap, for otherwise one of their boundaries would have too many common vertices with their white overlap region. Then we can triangulate the white regions to match the boundary of $\Sigma_i$ (and the chosen tiling of $Z(d_i,3)$) independently for all $i$. The white regions outside of $\bigcup \Sigma_i$ can be triangulated in an arbitrary way.

%Take a closer look at the region $\Sigma_i \setminus \Sigma_{i'}$. It cannot be connected

%By Lemma~\ref{corner}, $R_i$ is the convex hull of all the black triangles inside $R_i$. One can readily choose a suitable triangulation of white regions in the $(j+1)$-st layer preserving the boundaries of the $R_i$. 

%Triangulate the adjacent layers in a way such that all of the tilings of the $Z(d_i,3)$ are matched. 

%Continue extending up and down in this manner, triangulating so that $\Delta$ agrees with the tilings of the $Z(d_i,3)$, and otherwise triangulating arbitrarily. 

We conclude that we can choose cross-sections for the layers $k$ and above which agree with the cross-sections of the chosen fine tilings $Z(d_i,3)$. A symmetric argument shows that we can do the same for the layers $k$ and below, completing the description of $\Delta$, which has the crucial property that $\Delta$ respects the boundaries of the $Z(d_i,3)$.
%Applying the up and down operations to $G_i$ inside $\Delta$ finds a tiling of $Z(d_i,3)$ whose boundary is respected by $\Delta$. {\color{red} I think this is not true for \emph{any} $\Delta$. We need to construct $\Delta$ layer-by-layer so that $Z(d_i,3)$'s are respected by $\Delta$ } {\color{green} I think it's true for any $\Delta$ - some justification kinda added}
%{\color{red} Ok, what if $G_i$ = triangulated white rectangle of connectivity $\pi{(4,1)}$, and above it the corresponding square move is not available? It is a valid tiling $\Delta$, but the copy of $Z(4,3)$ is not respected by it. My point is that we need to construct $\Delta$ in a specific way, by going up and down and choosing good triangulations there.}

Now, the sub-tilings of these copies of $Z(d_i,3)$ can all be replaced with individual copies of a single tile, $Z(d_i,3)$. The new tiling $\Delta'$ has the Grassmannian graph $G$ as its $k$-th cross-section, since the effect of replacing the tilings of $Z(d_i,3)$ with single tiles in the $k$-th cross-section is to contract the graphs $G_i$ to the single vertices $v_i$.
\end{proof}

Having Lemma~\ref{lift} at our disposal, we interpret Theorem~\ref{plabicycles} in terms of the GBP to give some evidence in favor of the affirmative answer to Question~\ref{gbp}.

\begin{theorem}\label{gbpevidence}
If $n \ge 6$, the poset $\omega(k,n,2) - \hat 1$ is simply connected.
\end{theorem}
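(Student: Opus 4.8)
The plan is to deduce Theorem~\ref{gbpevidence} from Theorem~\ref{plabicycles} (or rather its consequence Corollary~\ref{sqmovecycles}) via the combinatorics of the Baues poset $\omega(k,n,2)$. The key structural input is that, by Lemma~\ref{lift}, $\omega(k,n,2) = \omega_{\lift}(k,n,2)$, so every $\pi$-induced subdivision of $Q_{kn}$ is the $k$-th cross-section of some (not necessarily fine) zonotopal tiling of $Z(n,3)$, and in particular the minimal elements of $\omega(k,n,2) - \hat 1$ are precisely the plabic tilings dual to trivalent reduced plabic graphs of connectivity $\pi(n,k)$. Recall the standard fact from poset topology that the order complex $\|\omega(k,n,2) - \hat 1\|$ is homotopy equivalent to the subcomplex of the \emph{crosscut} type built from its minimal and rank-one elements — more precisely, by a nerve/Quillen-style argument the homotopy type of a poset with a rich enough set of minimal elements can be computed from the bipartite incidence between minimal elements and the atoms just above them. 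So first I would set up this reduction: show that $\|\omega(k,n,2)-\hat 1\|$ is homotopy equivalent to a $2$-complex built on the set of plabic tilings of $Q_{kn}$, with an edge for each pair of plabic tilings coarsened by a single flip, and a $2$-cell for each ``rank two'' coarsening, i.e. each subdivision obtained by coarsening at two independent flip-quadrilaterals or at a single $Z(5,3)$-type configuration.

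Next I would identify that $2$-complex with the flip complex $\mb X_{\pi(n,k)}$ of Theorem~\ref{plabicycles}, or with $\mb Y_{\pi(n,k)}$ of Corollary~\ref{sqmovecycles}. The vertices match (plabic tilings $\leftrightarrow$ trivalent plabic graphs of connectivity $\pi(n,k)$, modulo trivalent moves in the square-flip picture). The atoms of $\omega(k,n,2) - \hat 1$ above a given plabic tiling correspond to replacing a parallelogram-pair by a coarser cell, which is exactly a square move on the dual plabic graph; so the edges match the square-flip edges. The rank-two elements of $\omega(k,n,2)-\hat 1$ covering two atoms correspond either to two commuting square flips in disjoint regions — the quadrilateral $2$-cells — or to a locally $Z(5,3)$-shaped coarsening, whose interval in the Baues poset is exactly the face poset of the pentagon with $\pi(5,2)$/$\pi(5,3)$ connectivity — the pentagonal $2$-cells. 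Thus the $2$-skeleton of $\|\omega(k,n,2)-\hat 1\|$ deformation retracts onto $\mb Y_{\pi(n,k)}$, which is simply connected by Corollary~\ref{sqmovecycles}. Since $\pi_1$ of a CW-complex depends only on its $2$-skeleton, $\omega(k,n,2)-\hat 1$ is simply connected. The hypothesis $n \ge 6$ enters to guarantee the poset (hence $Q_{kn}$, hence the flip graph) is nontrivial and connected — for $n \le 5$ the relevant cross-sections collapse to a single cell or an edge, making the statement vacuous or degenerate, while the flip-connectedness of plabic graphs (Theorem~\ref{connected}) gives connectivity of the $1$-skeleton in general.

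The main obstacle I expect is making the reduction from ``homotopy type of the order complex of $\omega(k,n,2)-\hat 1$'' to ``$\pi_1$ of a $2$-complex built from flips and flip-relations'' fully rigorous, rather than by analogy with the associahedron/secondary polytope picture. Concretely, one must verify: (i) every edge in the order complex between two plabic tilings can be pushed (up to homotopy) to a concatenation of single-flip edges — this is a kind of ``shelling down'' to the finest subdivisions and uses that any coarsening of a plabic tiling refines back to a plabic tiling via flips, which is essentially Postnikov/Oh–Speyer flip-connectedness together with Lemma~\ref{lift}; (ii) every loop of single flips that bounds a disk in $\|\omega(k,n,2)-\hat 1\|$ using higher-rank elements can be filled using only the quadrilateral and pentagonal $2$-cells listed above — i.e. that there are no ``unexpected'' rank-two intervals in the Baues poset beyond the commuting-pair and $Z(5,3)$ types. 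Step (ii) is the crux: it amounts to a local classification of the intervals $[\hat 0', \sigma]$ where $\hat 0'$ is a minimal element and $\sigma$ has Baues-rank two, which one checks by observing that coarsening a plabic tiling at two atoms either happens in disjoint sub-polygons (commuting) or inside a single sub-polygon with exactly five boundary vertices of connectivity $\pi(5,2)$ or $\pi(5,3)$ (by Lemma~\ref{corner}-type local considerations), matching precisely the pentagonal cells of $\mb Y_{\pi(n,k)}$. Once this local matching of low-rank intervals is in hand, the theorem follows by transporting the simple-connectedness of $\mb Y_{\pi(n,k)}$ through the identification.
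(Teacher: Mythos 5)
Your proposal uses the same two main ingredients as the paper's proof (Lemma~\ref{lift} to pass to lifting subdivisions, and the simple connectivity of the plabic flip complex), and in that broad sense it is the same strategy. But there are concrete gaps between your plan and a complete argument.

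First, a misidentification: the minimal elements of $\omega(k,n,2) - \hat 1$ are the finest $\pi$-induced subdivisions, i.e.\ the \emph{plabic triangulations} dual to \emph{trivalent} plabic graphs with connectivity $\pi(n,k)$ — not plabic tilings, and not equivalence classes of plabic graphs modulo trivalent moves. So the 2-complex you build from minimal elements and atoms must be identified with $\mb X_{\pi(n,k)}$, not with $\mb Y_{\pi(n,k)}$; the atoms above a plabic triangulation correspond to \emph{all} plabic moves (square moves and both trivalent moves, coming from almost plabic graphs), not just square moves. Since both $\mb X$ and $\mb Y$ are simply connected this doesn't break the conclusion, but as written your identification of cells is off.

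Second, and more seriously, the ``crosscut/Quillen-style'' reduction you invoke is not an off-the-shelf theorem in this setting: $\omega(k,n,2)-\hat 1$ is an arbitrary poset with many minimal elements and no bounding $\hat 0$, and you never justify that the 2-skeleton of its order complex retracts onto the low-rank part. This is exactly where the paper does nontrivial work. The paper defines a rank function (length of the longest chain down to a trivalent plabic graph), observes that the order complex of the rank-$\le 2$ subposet $\omega_{\le 2}$ is the barycentric subdivision of $\mb X_{\pi(n,k)}$, and then — rather than retracting — explicitly homotopes any loop in the 2-skeleton down into $\omega_{\le 2}$: break the loop at local minima, pull each local minimum to a rank-0 vertex, and fill the intervals between consecutive pulled-down endpoints using chains in $\{\text{graphs} \le G_i, \text{rank } 0 \text{ or } 1\}$ (this uses Postnikov's Proposition~11.4). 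Your step (i) acknowledges that some such ``pull-down'' is needed, but you neither carry it out nor supply the flip-connectivity input that makes it work. Your step (ii) also conflates a local classification of rank-two Baues intervals with the global claim that any filling using higher-rank elements can be replaced by a rank-$\le 2$ filling; the latter is stronger and is what actually needs proving, which the paper handles via the pull-down argument rather than a fillability classification. Finally, note the role of $n\ge 6$: it is not just to avoid degenerate flip graphs, but specifically to ensure that $\hat 1$ has rank $> 2$, so that it is genuinely excluded from $\omega_{\le 2}$ and the identification with $\mb X_{\pi(n,k)}$ holds.
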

\begin{proof}
By Lemma~\ref{lift}, we can work with the zonotopal sections instead of Grassmannian graphs.

The poset of Grassmannian graphs is not graded, but we still can introduce the rank function as the length of the longest chain of covering relations finishing at a plabic trivalent graph. Then the rank 0 graphs are just plabic trivalent graphs; the rank 1 graphs are \emph{almost plabic} in the notation of~\cite{Pos18} and correspond to plabic moves; the rank 2 graphs correspond to the 2-cells of the complex $\mb X_{\pi^:}$ in Theorem~\ref{plabicycles}. 
    
%    [The rank could be also defined on the rest of graphs, but there might be edges between graphs of ranks differing by more than 1, this is fine with us.]
    
Consider the sub-poset $\omega_{\le 2}$ of $\omega(k,n,2) - \hat 1$ consisting of those three types of graphs of rank at most 2. The condition $n \ge 6$ implies that the excluded singleton-graph $\hat 1$ was of rank higher than 2. 
    
The nerve (or the order complex) of $\omega_{\le 2}$ is isomorphic to the barycentric subdivision\footnote{The barycentric subdivision of a regular CW-complex $X$ is the order complex for the closure poset of $X$, that is, for the poset of the closed cells of $X$, ordered by inclusion.} of $\mb X_{\pi^:}$, as simplicial complexes. This is done by a straightforward identification of the low rank graphs with the cells of $\mb X_{\pi^:}$, as above. By Theorem~\ref{plabicycles}, $\omega_{\le 2}$ is simply connected. 

Consider the 2-skeleton $\omega^{(2)}$ of the nerve of $\omega(k,n,2) - \hat 1$. It contains $\omega_{\le 2}$. We show that $\omega^{(2)}$ is simply connected. Let $L$ be a (simplicial) loop in $\omega^{(2)}$. We would like to pull it down (in the sense of rank) along triangles of $\omega^{(2)}$, to a loop in $\omega_{\le 2}$, and then contract it there. We start by breaking $L$ into intervals starting and finishing at local minima (in the sense of rank). For each local minimum $G$, pull it down to a rank 0 graph $\widehat G$, in an arbitrary fashion. For each interval $G_1 - \ldots - G_m$ between two consecutive minima $G_1$ and $G_m$, find its maximum $G_i$. Connect the graphs $\widehat G_1$ and $\widehat G_m$ inside the poset $\{\text{graphs} \underset{\text{Baues}}\le G_i, \text{ of rank 0 or 1}\}$ by a chain $\widehat G_1 = H_1 - \ldots - H_\ell = \widehat G_m$ (it can be done by~\cite[Proposition~11.4]{Pos18}). Note that the chain $G_1 - \ldots - G_m$ can be pulled down to the chain $\widehat G_1 - H_1 - \ldots - H_\ell - \widehat G_m$ along the triangles $G_1 G_2 \widehat G_1, \ldots, G_{i-1} G_i \widehat G_1,  H_1 G_i H_2, \ldots, H_{\ell-1} G_i H_\ell, \widehat G_m G_i G_{i+1}, \ldots, \widehat G_m G_{m-1} G_m$. Repeating this over all intervals between the local minima, we homotope $L$ to a loop inside $\omega^{(2)}$. But $\omega^{(2)}$ is simply connected, so $\omega(k,n,2) - \hat 1$ is simply connected as well.
\end{proof}

%Through personal communication, Jorge Olarte and Francisco Santos inform us that they have affirmatively answered Question~\ref{gbp}, as well as provided an independent proof of Lemma~\ref{lift}.

\section{Open Questions}\label{questions}

\begin{enumerate}[wide, labelindent=0pt, label=\textbf{(\arabic*)}]

\item A \emph{(single) wiring diagram} is a way to write the completely inverted permutation $w_0$ as a product of $\binom{n}{2}$ elementary transpositions $s_i$ in $S_n$, and the flip (or mutation) operation is the \emph{Coxeter move}, $s_is_{i+1}s_i \leftrightarrow s_{i+1}s_is_{i+1}$, which can connect any two wiring diagrams. There is one-to-one correspondence between wiring diagrams and the rhombus tilings of the regular $2n$-gon, so the proof of Henriques and Speyer in~\cite{HenSpe10} applies to wiring diagrams as well and shows that simple cycles of wiring diagram mutations are of length 4 and 8.
    
%The proof of Henriques and Speyer in \cite{HenSpe10} shows not only that the simple cycles of rhombus tiling flips are of length 4 and 8, but also that the same is true for cycles of mutations in single wiring diagrams. 
    
We can ask the same question for \emph{double wiring diagrams}, introduced in \cite{Fomin00}, which are formed by two interlaced single wiring diagrams. The moves shown in Figure~\ref{dwdflip} can relate any two double wiring diagrams, and are akin to the square move in plabic graphs. 

Any double wiring diagram on $n$ strands can be realized as a plabic graph with strand connectivity $\pi(2n,n)$, but the converse is false for $n > 3$. Corollary~\ref{squareflips} would suggest that there is a simply-connected complex of double wiring diagrams with $2$-cells glued along $4$- and $5$-cycles. However, since not all plabic graphs are double wiring diagrams, we can also see $8$-cycles like those in single wiring diagrams, as can be seen in the flip graph for $n=4$. We conjecture that filling in these $8$-cycles is enough to make the complex simply connected.

\begin{figure}[ht]\centering
\includegraphics[width=0.75\textwidth]{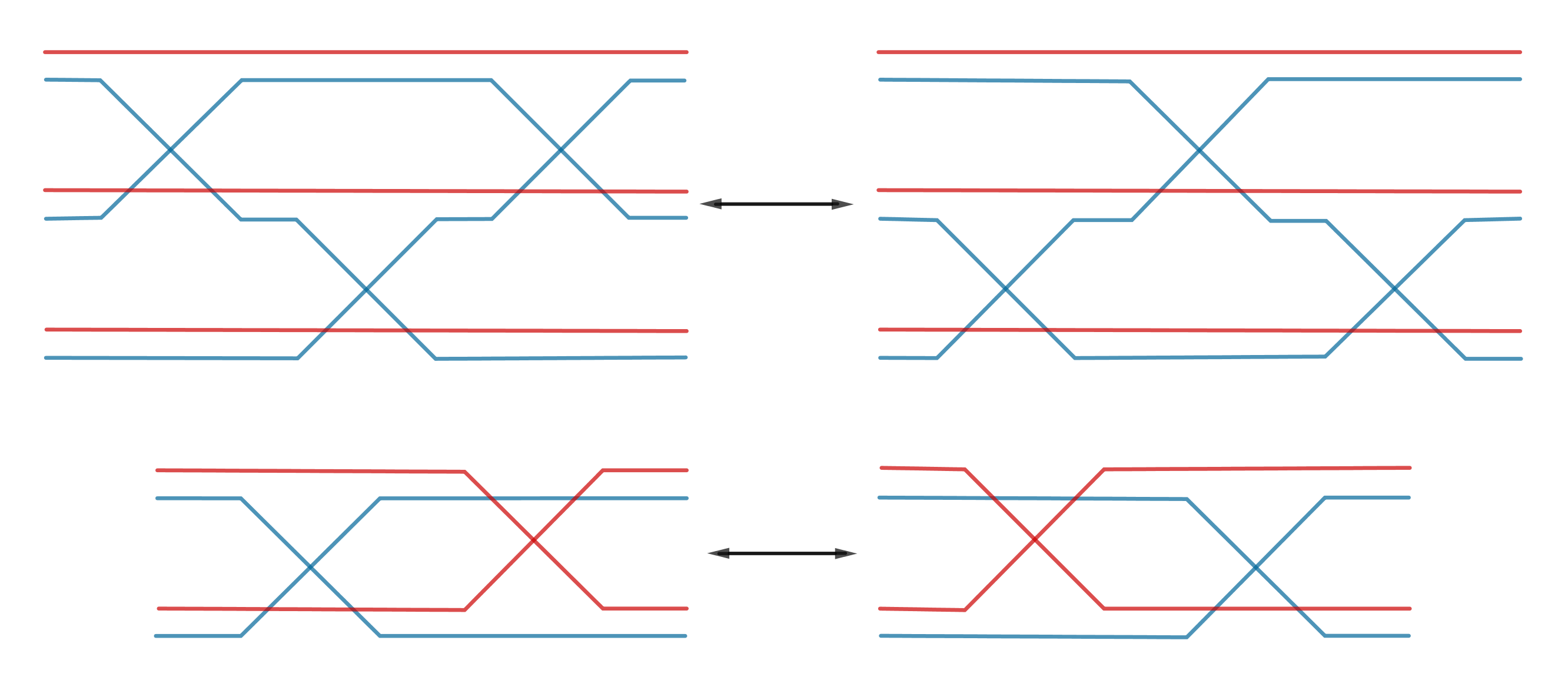}
\caption{Double wiring diagram flips, with red Coxeter move omitted}
\label{dwdflip}
\end{figure} 

\item In the appendix below we reprove a major case of Theorem~\ref{plabicycles} by extending plabic graphs to zonotopal tilings, and relating the cycles of plabic moves to the cycles of zonotopal flips. Can one prove the general case of Theorem~\ref{plabicycles} in a similar way, by relating general plabic graphs to the zonotopal tilings of some tileable subdomains of $Z(n,3)$? The uniqueness of maximal/minimal tilings, as in Theorem~\ref{bruhat}, no longer holds for general tileable domains. 

\item
When defining $\mb X_{\pi^:}$ in Section~\ref{cycles}, we glued certain 2-cells, corresponding to the relations between plabic moves, to the flip graph of plabic configurations. It seems that the complexes $\mb X_{\pi{(6,i)}}$, $1\le i \le 5$, are just 2-spheres. One can proceed and glue 3-cells to $\mb X_{\pi^:}$ along the following 2-spheres:
\begin{itemize}
    \item every copy of $\mb X_{\pi{(6,i)}}$, $1\le i \le 5$, occuring inside $\mb X_{\pi^:}$;
    \item every copy of $\partial \left( \mb X_{\pi{(5,i)}} \times \mb X_{\pi{(4,j)}} \right)$, $1\le i \le 4$, $1\le j \le 3$;
    \item every copy of $\partial \left( \mb X_{\pi{(4,i)}} \times \mb X_{\pi{(4,j)}} \times \mb X_{\pi{(4,\ell)}} \right)$, $1\le i,j,\ell \le 3$.
\end{itemize}
If the 3-complex gotten from $\mb X_{\pi{(7,i)}}$, $1\le i \le 6$, happens to be a 3-sphere (which we expect), we can proceed by gluing 4-cells in a similar fashion, and so on. 
\begin{question}
Is the $n$-complex, obtained from $\mb X_{\pi{(n+4,i)}}$, $1\le i \le n+3$, by gluing cells up to dimension $n$ as above, homeomorphic to the $n$-sphere?
\end{question}

This question seems to be closely related to the generalized Baues problem~\ref{gbp}. 

\item Is there any nice combinatorial interpretation for the cross-sections of higher zonotopes $Z(n,\ge 4)$? An analogue of plabic triangulation in the 3-dimensional sections of $Z(n,4)$ is a certain type of tesselations by tetrahedra and octahedra, while the counterpart of the square move switches between two specific subdivisions of a certain 9-vertex polytope (however, this changes the number of tiles). Methods in this paper can be used to describe the cycles in the flip graph of such configurations.

\item The flip graph of zonotopal tilings of a general (not cyclic) zonotope might be disconnected, even in three dimensions~\cite{liu2018}. What can be said about its flip cycles? Are they still ``generated'' by cycles of length 4 and 10?
\end{enumerate}

\subsection*{Acknowledgements}

This work began as a project in the MIT Summer Program for Undergraduate Research (SPUR) in summer 2018, by the second-named author under the mentorship of the first-named author. 
We thank Pavel Galashin and Alexander Postnikov for suggesting this problem to us, and for fruitful discussions and interest in our work. We thank Jorge Olarte and Francisco Santos, who pointed out a gap in the earlier version of the proof of Lemma~\ref{lift}. We also thank an anonymous referee for valuable remarks.
%This work was done in the MIT Summer Program for Undergraduate Research (SPUR) program, 2018. I thank my mentor Alexey Balitskiy for many helpful discussions and teaching me about many of these topics, as well as noticing when my claims were false. I'd also like to thank Alexander Postnikov and Pavel Galashin for suggesting such an interesting and colorful project. Finally I thank Ankur Moitra and Davesh Maulik for their thoughtful comments and for organizing SPUR.

\appendix
\setcounter{secnumdepth}{0}
\renewcommand{\thesection}{A}
\section{Appendix}\label{sec:appendix}
\setcounter{theorem}{0}

The proof of Theorem~\ref{plabicycles} in Section~\ref{cycles} relies on the base case, which can be reformulated as follows. Consider the CW-complex $\mb U_{n}$, defined as follows
\begin{itemize}
    \item The vertices of $\mb U_n$ are the triangulations of a convex $n$-gon.
    \item The edges of $\mb U_n$ are the flips between triangulations differing inside a quadrilateral.
    \item Every five triangulations differing inside a pentagon give rise to a 5-cycle of flips, along which we glue a 2-cell in $\mb U_n$.
    \item Every two commuting triangulations occurring in non-overlapping quadrilaterals give rise to a 4-cycle of flips, along which we glue a 2-cell in $\mb U_n$.
\end{itemize}
\begin{fact}\label{triang}
The complex $\mb U_{n}$ is simply connected.
\end{fact}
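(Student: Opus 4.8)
The plan is to prove the equivalent statement that $\mb X_{\pi(n,1)}$ (in the notation of Theorem~\ref{plabicycles}) is simply connected, and in fact to produce an explicit cellular contraction of every loop, since this stronger form is what the induction in the proof of Theorem~\ref{plabicycles} actually consumes. A triangulation of the convex $n$-gon has no interior vertices (the face labels of the dual plabic graph are the $n$ singletons), so it is precisely the plabic triangulation of a trivalent reduced plabic graph of connectivity $\pi(n,1)$; a diagonal flip is exactly a plabic move on the dual, and two triangulations differ inside a quadrilateral exactly when their dual graphs differ by one move. Hence $\mb U_n \cong \mb X_{\pi(n,1)}$, whose only $2$-cells are commuting squares and white pentagons, and it suffices to contract loops there.

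The engine is Galashin's cross-section map together with Theorem~\ref{zonotopecycles}. Let $p$ send a fine zonotopal tiling $\Delta$ of $Z(n,3)$ to its first cross-section $\Sigma_1(\Delta)$, a triangulation of the $n$-gon; $p$ is surjective by Theorem~\ref{galashin}. By Lemma~\ref{squareflips} and Galashin's local rule (a zonotopal flip at height $h$ induces a square move in $G_h$, a white trivalent move in $G_{h-1}$, a black trivalent move in $G_{h+1}$, and nothing in the other cross-sections), and since every zonotopal flip of $Z(n,3)$ has height at least $2$, the map $p$ sends a zonotopal flip of height $2$ to a diagonal flip of $\Sigma_1$ and sends every flip of height $\ge 3$ to a constant path. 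I would then check that $p$ is cellular on $2$-cells: a commuting quadrilateral of $\mb Z_{n,3}$ maps to a commuting quadrilateral, an edge, or a vertex of $\mb U_n$ according to how many of its two flips have height $2$; and a decagonal $2$-cell is a copy of $Z(5,3)$ sitting at some vertical offset $m \ge 0$, whose ten flips occur at heights $m+2$ and $m+3$, so its boundary maps to a constant path when $m \ge 1$, whereas when $m = 0$ the first cross-section of that $Z(5,3)$ is a convex pentagon and the five height-$2$ flips among the ten trace out exactly the five diagonal flips of that pentagon, i.e. a white pentagonal $2$-cell of $\mb U_n$.

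With this in hand the transfer is standard. Given a loop $L$ in the $1$-skeleton of $\mb U_n$, lift it to a loop $\widetilde L$ in the $1$-skeleton of $\mb Z_{n,3}$: pick a preimage tiling over each vertex of $L$ and join consecutive preimages by a path of zonotopal flips, possible since the flip graph of $Z(n,3)$ is connected; then $p(\widetilde L)$ equals $L$ after deleting the height-$\ge 3$ steps, which is a homotopy rel basepoint inside the $1$-skeleton of $\mb U_n$ and so harmless. By Theorem~\ref{zonotopecycles}, $\widetilde L$ bounds a cellular disk $f : \mb D \to \mb Z_{n,3}$ built from quadrilateral and decagonal cells. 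Postcomposing with $p$ and collapsing each cell of $\mb D$ that $p$ degenerates (always onto a face of that cell, an edge or a vertex) yields a cellular disk in $\mb U_n$ with boundary $L$, in the sense of Definition~\ref{cellcontract}; hence $L$ is nullhomotopic and $\mb U_n$ is simply connected.

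The main obstacle is making the middle step honest: one must verify that $p$ is genuinely a cellular map --- in particular that the fibers of $p$ are connected under the height-$\ge 3$ flips, so that the lift $\widetilde L$ is well defined up to a homotopy $p$ controls --- and that collapsing the degenerate cells of $f$ leaves a bona fide polyhedral complex mapping cellularly to $\mb U_n$ rather than tearing it; both are routine but fiddly bookkeeping. A much shorter alternative exists --- $\mb U_n$ is the $2$-skeleton of the associahedron, all of whose $2$-faces are squares and pentagons, and the $2$-skeleton of any polytope is simply connected --- but it does not yield an explicit contraction in the sense of Definition~\ref{cellcontract} needed by the proof of Theorem~\ref{plabicycles}, which is why the zonotopal route is the one to carry out here.
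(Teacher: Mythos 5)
Your approach is essentially the same as the paper's: identify $\mb U_n$ with $\mb X_{\pi(n,1)}$, lift a loop of flips to a loop of zonotopal flips in the flip graph of $Z(n,3)$, contract using Theorem~\ref{zonotopecycles}, and push the contraction back down through the cross-section map. The paper's appendix carries out exactly this program, but for all cyclic connectivities $\pi(n,k)$ simultaneously, and then observes that the $k=1$ case is Fact~\ref{triang}.

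Where you write that the remaining verification --- that the $p$-fibers are connected under height-$\ge 3$ flips and that the lift $\widetilde L$ is well-controlled --- is ``routine but fiddly bookkeeping,'' you are underselling the work. This is precisely what the paper's Lemma~\ref{cleanmove} and Lemma~\ref{fixing} accomplish, and Lemma~\ref{cleanmove} requires a genuine induction on the layer $k$: given a trivalent move in $G_k$, one must show there is a sequence of zonotopal flips that performs it without touching $G_k$ beforehand, and this relies on being able to set up square moves in neighboring layers recursively. In particular, your lifting step as literally stated --- pick arbitrary preimages of the vertices of $L$ and join them by arbitrary paths, then claim $p(\widetilde L)$ ``equals $L$ after deleting the height-$\ge 3$ steps'' --- does not hold for arbitrary connecting paths; the connecting paths must be constructed with care (that is Lemma~\ref{fixing}, which in turn uses Lemma~\ref{cleanmove}) so that they stay in a single $p$-fiber. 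You flag this as the obstacle, so you have located the gap correctly, but filling it is the substantive content of the argument rather than bookkeeping. Your observation that the polytopal shortcut (the $2$-skeleton of the associahedron) fails to produce a cellular contraction in the sense of Definition~\ref{cellcontract} is correct and well taken; the paper makes the same point implicitly by saying the appendix proof, unlike the classical proofs, builds cellular contractions.
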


This is equivalent to the case $k=1$ of Theorem~\ref{plabicycles}. Fact~\ref{triang} is well-known, and goes back at least to Stasheff's work~\cite{Sta63}, where he constructs his celebrated associahedron. The formulation as above (but in greater generality) could be found, for example, in~\cite[Theorem~7]{LubMasWag17}. An independent proof of this fact follows from the argument below. 

We give a different proof of Theorem~\ref{plabicycles} for the case of cyclic connectivity $\pi^: = \pi(n,k)$. An advantage of this proof that it doesn't rely on the base case $k=1$, unlike the one in Section~\ref{cycles}. 

Our strategy is to consider a cycle of plabic moves as a cycle of zonotopal tiling flips, and apply Theorem~\ref{zonotopecycles}. We would perform each plabic move by doing flips in tiling containing the graph as a cross-section. Unfortunately the appropriate flip isn't always available, but luckily we can set it up without changing the relevant layer. Let $\Delta$ be a zonotopal tiling and $G_k$ be a plabic graph formed by a cross-section of $\Delta$. 

\begin{lemma}\label{cleanmove} Suppose $M$ is a possible black (resp. white) trivalent move in $G_k$. Then there exists a finite sequence of flips $(S_1,S_2,\ldots,S_m)$ in $\Delta$, such that $G_\ell$ is unchanged by each of the first $m-1$ mutations for any $\ell$ at least (resp. at most) $k$, but the move $M$ occurs on the last mutation.
\end{lemma}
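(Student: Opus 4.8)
The plan is to reduce a black (resp. white) trivalent move in $G_k$ to a genuine zonotopal flip by first making room for it. Recall from Section~\ref{plabic} that a zonotopal flip at height $j$ performs a square move in $G_j$, a white trivalent move in $G_{j-1}$, and a black trivalent move in $G_{j+1}$. So a black trivalent move $M$ in $G_k$ ``wants'' to be realized as the flip of a set $S\in\binom{[n]}{4}$ living at height $k-1$, and a white trivalent move wants to be a flip at height $k+1$. I will treat the black case; the white case is symmetric (reflect the moment curve, or equivalently replace $Z(n,3)$ by its image under $x\mapsto -x$, which swaps the roles of black/white and of $k\leftrightarrow n-k$).

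First I would identify the candidate set $S$. The move $M$ takes place at a vertex $v\in\Sigma_k$ incident to four triangles forming a ``bowtie/square'' pattern in the black configuration, and as in the proof of Lemma~\ref{squareflips} the five face labels involved differ in exactly four coordinates $S=\{a<b<c<d\}$; this $S$ determines the signed subsets $X_a,X_b,X_c,X_d\in\Delta$ with $X_i^0=S\setminus\{i\}$. For the flip of $S$ to be available we need (i) $S_a^+=S_b^+=S_c^+=S_d^+$ and (ii) the values $s_a,s_b,s_c,s_d$ to alternate cyclically. Condition (i) already holds, because all four triangles meet at $v$ and hence their labels agree outside $S$; the ``badness'' is entirely in condition (ii): the tiles $\tau_{X_i}$ may be positioned so that the plane $x=k-1$ does not cut them in the alternating fashion required, even though the plane $x=k$ cuts them correctly to exhibit $M$. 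So the real content is: perform auxiliary flips, none of which touches $\Sigma_\ell$ for $\ell\ge k$, that move these four tiles into alternating position at height $k-1$.

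The mechanism for this is Lemma~\ref{compatible}: the cross-section $\Sigma_{k-1}$ is determined by $\Sigma_k$ \emph{up to the triangulation of the black regions}, and more generally every layer strictly below $k$ is free to be re-triangulated (in its black regions) without disturbing $\Sigma_k,\Sigma_{k+1},\ldots$ In zonotopal terms, a flip of a set $S'$ with $\max$-height $<k$ — equivalently a white trivalent move in some $G_j$ with $j\le k-1$, resp.\ a square move in $G_j$ with $j\le k-1$ — changes only layers $\le k-1$ (indeed only layers $j-1,j,j+1$). So I would argue: starting from $\Delta$, there is a sequence of such ``low'' flips after which the four tiles $\tau_{X_a},\ldots,\tau_{X_d}$ sit in alternating position relative to $x=k-1$, at which point the flip of $S$ becomes available, and performing it realizes $M$ in $G_k$. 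To produce that sequence concretely, note that the obstruction lives inside the sub-zonotope spanned by $v_a,v_b,v_c,v_d$ together with the direction forcing the height; this is essentially a local reconfiguration in a copy of $Z(5,3)$ (or a $Z(4,3)$ slab), where the flip graph is explicitly the decagon of Figure~\ref{zonotopaltilipng}, so one can exhibit the needed finite chain by inspection and check it never alters layers $\ge k$. Finally, set $(S_1,\ldots,S_{m-1})$ to be this preparatory chain and $S_m:=S$.

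The main obstacle I anticipate is the bookkeeping in the preparatory step: showing that one can \emph{always} reach alternating position using only flips confined to layers below $k$, and that none of them accidentally performs a square or trivalent move in $G_k$ or above. The clean way to handle this is to phrase everything through Lemma~\ref{compatible}: since all layers $\ge k$ are frozen by fixing $\Sigma_k$, and since within layer $k-1$ we only ever need to re-triangulate black regions (which is exactly the freedom Lemma~\ref{compatible} grants), the existence of \emph{some} valid re-triangulation putting the four tiles in alternating position is automatic, and then one invokes flip-connectedness of the tilings of the relevant small slab (a consequence of Theorem~\ref{bruhat}) to get an explicit flip sequence between the current $\Sigma_{\le k-1}$ and the desired one. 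The only thing to double-check is that this small-slab flip-connectedness can be carried out \emph{relative to} the fixed data in layer $k$, i.e.\ that no flip in the chain is forced to be a height-$k$ flip — but a flip has a well-defined height equal to $\min(X^0)+|X^+\cap\{\text{lower coords}\}|$ roughly, and one simply restricts to the chain avoiding height $\ge k$, which exists because the target configuration differs from the source only below height $k$.
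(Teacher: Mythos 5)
Your proposal has the right geometric picture and reaches the same reduction the paper uses: a black trivalent move in $G_k$ ``wants'' to be a zonotopal flip at height $k-1$, condition (i) on the signed sets is automatic, and the real obstruction is that the black regions of $\Sigma_{k-1}$ may be triangulated the wrong way; by Lemma~\ref{compatible} these regions are exactly the free data, so some retriangulation makes the square move available. Up to here you are on the same track as the paper (your symmetry for the white case, reflecting the moment curve, is equivalent to the paper's ``complement all vertex labels'').

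The gap is in the last step, which you correctly flag as ``the only thing to double-check'' but then resolve with an appeal that isn't actually available. You claim that because the current tiling and the target tiling agree on all layers $\ge k$, there must exist a chain of flips connecting them confined to heights $<k$, citing ``flip-connectedness relative to fixed data in layer $k$.'' No such relative flip-connectedness result is in the toolkit at this point: it is not a consequence of Theorem~\ref{bruhat} (which concerns unconstrained flip-connectedness and the Bruhat poset structure), and the statement you want is essentially Lemma~\ref{fixing} in the paper — which is proven \emph{after} and \emph{using} the present lemma, so invoking it would be circular. The paper closes this gap with an induction on $k$: the retriangulation of the black regions of $\Sigma_{k-1}$ is a sequence of black trivalent moves in $G_{k-1}$, and by the inductive hypothesis (applied at level $k-1$) each such move can be realized by a flip sequence whose first $m-1$ flips leave $G_\ell$ unchanged for all $\ell\ge k-1$, while the last flip performs only the black trivalent move in $G_{k-1}$ and hence still leaves $\Sigma_k$ untouched. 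The base case $k\le 2$ is vacuous since there are no black trivalent moves there. This induction is the engine of the proof; without it, your ``one simply restricts to the chain avoiding height $\ge k$'' is asserting precisely the statement that needs to be proven.
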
\begin{proof}
Complementing all of the labels of the vertices doesn't change the structure of the available flips but does change the colors of all of the regions, so it suffices to prove the result when $M$ is a black trivalent move. We proceed by induction on $k$. When $k \leq 2$, there are no legal black trivalent moves, so the claim holds vacuously. Now, the black trivalent move corresponds to two black triangles in $\Sigma_k$, which by Lemma \ref{compatible} creates two white triangles in $\Sigma_{k-1}$, which are forced to border two black regions. If the black regions are triangulated such that a square move is legal using the white triangles, then perform the corresponding flip and we're done. Otherwise, there exists a sequence of triangulation flips in the black regions which would make the square move legal. By the inductive hypothesis, each of these flips can be done through a finite sequence of mutations, each of which (except the last) leave $G_\ell$ unchanged for all $\ell \geq k-1$. The last flip in each sequence performs a black trivalent move in $\Sigma_{k-1}$, so also leaves $S_k$ unchanged. Therefore we can set up the square move in $\Sigma_{k-1}$ without changing $\Sigma_k$ at all, so the induction is complete.
\end{proof}

In order to properly embed cycles as cyclic zonotopal flips, we also need to match up tilings which share a cross-section. Once we've done that, we are ready to prove the result for cyclic connectivities. \begin{lemma}\label{fixing}
Let $\Delta$ and $\Delta'$ be two fine zonotopal tilings of $Z(n,3)$ which are identical on $G_k$ for some fixed $k$. Then there exists a series of flips, none of which alter $G_k$, which transform $\Delta$ into $\Delta'$.
\end{lemma}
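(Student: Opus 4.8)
The plan is to reduce the problem to the statement of Theorem~\ref{zonotopecycles} (or to the higher Bruhat order machinery of Theorem~\ref{bruhat}) by working layer-by-layer away from the fixed cross-section $G_k$. First I would observe that the data of a fine zonotopal tiling $\Delta$ of $Z(n,3)$ is completely recovered from its cross-sections $\Sigma_1,\ldots,\Sigma_{n-1}$, and that by Lemma~\ref{compatible}, once $\Sigma_k$ is fixed, $\Sigma_{k+1}$ is determined up to the triangulation of its white regions and $\Sigma_{k-1}$ up to the triangulation of its black regions (and so on for the further layers). Thus the tilings $\Delta$ and $\Delta'$ agreeing on $\Sigma_k$ can differ only in how the various monochromatic polygonal regions in the layers above and below $k$ are triangulated. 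The goal is then to realize each such retriangulation by a flip that does not disturb layer $k$.

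Next I would handle the layers one at a time, moving outward. Consider the layer $\Sigma_{k+1}$: by Lemma~\ref{compatible} its white regions are convex polygons whose triangulations in $\Delta$ and $\Delta'$ may differ, and by Fact~\ref{triang} any two triangulations of such a convex polygon are connected by a sequence of diagonal flips. Each such diagonal flip in a white region of $\Sigma_{k+1}$ is a square move in $G_{k+1}$, which corresponds to a \emph{white} trivalent move in $G_k$ by Galashin's observation; but we cannot afford to change $G_k$. The key tool here is Lemma~\ref{cleanmove}: a white trivalent move in $G_{k+1}$ — equivalently, we want to perform a square move at level $k+1$ without touching level $k$ — can be set up by a sequence of flips that leave $G_\ell$ unchanged for all $\ell \le k+1$ except on the last flip, which performs the move at level $k+1$ and so leaves layer $k$ (being below $k+1$) untouched. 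Wait — more precisely, I would apply Lemma~\ref{cleanmove} in the form that realizes a square move at level $k+1$ via flips fixing all layers $\le k$ up to the final flip, which fixes layers $\ne k+1$; this final flip alters only $\Sigma_{k+1}$. Iterating over a flip sequence connecting the two triangulations of each white region of $\Sigma_{k+1}$ (using the commuting-flip quadrilateral relations only to order them), we can transform $\Sigma_{k+1}$ from its $\Delta$-form to its $\Delta'$-form without ever changing $\Sigma_k$.

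Having matched $\Sigma_{k+1}$, the tilings now agree on both $\Sigma_k$ and $\Sigma_{k+1}$, which pins down $\Sigma_{k+2}$ up to triangulation of its white regions; repeating the argument (now with the restored level $k+1$ playing the role of the fixed level and all modifications confined to levels $\ge k+1$) transforms $\Sigma_{k+2}$, then $\Sigma_{k+3}$, and so on up to $\Sigma_{n-1}$. A symmetric argument, using the black-trivalent-move version of Lemma~\ref{cleanmove}, works downward from $\Sigma_k$ through $\Sigma_{k-1},\Sigma_{k-2},\ldots,\Sigma_1$. Once all layers agree, $\Delta$ has been transformed into $\Delta'$, and by construction no flip in the entire process altered $\Sigma_k$, i.e.\ $G_k$. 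The main obstacle — and the reason the lemma is not immediate — is exactly the worry that a square move at level $k+1$ forces a trivalent move at level $k$; the whole point of Lemma~\ref{cleanmove} is to circumvent this by absorbing the disturbance into lower layers, and the careful bookkeeping of \emph{which} layers each flip in the sequence touches is the step to carry out with the most care.
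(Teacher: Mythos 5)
Your proof takes the same route as the paper's: match the layers $\Sigma_{k\pm 1}, \Sigma_{k\pm 2},\ldots$ outward from the fixed layer $k$ using Lemma~\ref{compatible} to reduce to retriangulations of monochromatic regions, flip-connectivity of triangulations of convex polygons, and Lemma~\ref{cleanmove} to realize each such retriangulation by zonotopal flips that never touch $\Sigma_k$. One small terminological correction: a diagonal flip in a white region of $\Sigma_{k+1}$ is a \emph{white trivalent move} in $G_{k+1}$, not a square move, and the zonotopal flip that finally performs it lives at level $k+2$ (so it also perturbs $\Sigma_{k+2}$ and $\Sigma_{k+3}$, which is fine since those are matched later); accordingly Lemma~\ref{cleanmove} absorbs the setup into layers \emph{above} $k+1$ when working upward, not into lower layers. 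These slips do not affect the argument, which is otherwise the same as the paper's.
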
\begin{proof}
It suffices to find such a sequence of moves which make $\Delta$ match $\Delta'$ on $\Sigma_{k+1}$ (without ever changing $G_k$ or any lower layer) and $\Sigma_{k-1}$ (without ever changing $G_k$ or any higher layer). Once this is done we can recursively match all of the layers to transform $\Delta$ into $\Delta'$. By Lemma~\ref{compatible}, $\Delta$ and $\Delta'$ already agree up to white (resp. black) triangulation on $\Sigma_{k+1}$ (resp. $\Sigma_{k-1})$. By the flip connectivity of triangulations, there exists a sequence of white (resp. black) trivalent flips in $G_{k+1}$ (resp. $G_{k-1}$) which transform $\Delta$ to completely match $\Delta'$ on $\Sigma_{k+1}$ (resp. $\Sigma_{k-1}$). By Lemma~\ref{cleanmove}, for each of these flips there exists a finite sequence of flips which perform only this move in $G_{k+1}$ (resp. $G_{k-1}$, none of which change $G_\ell$ for any $\ell \leq k$ (resp. $\ell \geq k$). Therefore all of these triangulation moves can be performed without ever changing $G_k$ or any lower (resp. higher) layer, as desired.
\end{proof}

\begin{proof}[Proof of Theorem~\ref{plabicycles} for cyclic permutations.]

Fix any cyclic permutation $\pi^: = \pi(n,k)$. Let $\gamma = M_1M_2\cdots M_{m}$ be a loop in $\mb X_\pi^:$ connecting plabic graphs $G_k^1,G_k^2,\ldots,G_k^{m+1} = G_k^1$ with connectivity $\pi(n,k)$. We will construct a loop $Z(\gamma)$ in $\mb Z_{n,3}$ such that the flips in $Z(\gamma)$ cause exactly the moves $M_1,M_2,\ldots,M_m$ to occur in $G_k$, in that order. For $i = 0,1,\ldots,m-1$, there exists $\Delta_i$ whose cross-section at height $k$ is exactly $G_k^{i+1}$. By Lemma~\ref{squareflips} (if $M_{i+1}$ is a square move) and Lemma~\ref{cleanmove} (if $M_{i+1}$ is a black or white trivalent move), there exists a sequence of moves starting from $\Delta_i$ which performs only the move $M_{i+1}$ in $G_k^{i+1}$. The resulting tiling $\Delta_i'$ from this sequence of moves is identical to $\Delta_{i+1}$ at height $k$, so by Lemma~\ref{fixing} there exists another sequence flips, none of which cause a move in $G_k$, which turns $\Delta_i'$ into $\Delta_{i+1}$, where $i+1$ is considered modulo $m$. Concatenating all these sequences of moves results in our loop $Z(\gamma)$ with the desired properties.

By Theorem~\ref{zonotopecycles}, the loop $Z(\gamma)$ is contractible, by moving it across the 2-cells in $\mb Z_{n,3}$. We will show that these 2-cells correspond to 2-cells in $\mb X_{\pi(n,k)}$ nicely, in order to contract $\gamma$.

The quadrilaterals in $\mb Z_{n,3}$ are formed by two commuting flips in $Z(n,k)$, which result in either two moves in separate parts of $G_k$ (a quadrilateral in $\mb X_{\pi(n,k)})$, one move being performed twice in $G_k$ (an edge in $\mb X_{\pi(n,k)}$), or no moves in $G_k$ (a point in $\mb X_{\pi(n,k)}$). In all cases, when $Z(\gamma)$ is moved across a quadrilateral, the image of the quadrilateral in $\mb X_{\pi(n,k)}$ is a vertex, edge, or 2-cell which $\gamma$ can also be moved across.

The only other 2-cells in $\mb Z_{n,3}$ are decagons whose vertices correspond to the ten refinements of an instance of $Z(5,3)$ inside $Z(n,3)$. Depending on where the plane $x=k$ intersects the copy of $Z(5,3)$, one of five things could happen in $G_k$ as the ten flips in the decagon are performed (see Figure~\ref{plabicyclepng}). \begin{enumerate}
\item If $x=k$ does not intersect the copy of $Z(5,3)$ or only touches the top or bottom vertex, no moves occur in $G_k$ and the image of the decagon in $\mb X_{\pi(n,k)}$ is a vertex.
\item If $x=k$ intersects the copy of $Z(5,3)$ at relative height $1$, then five white trivalent moves occur in a subgraph of $G_k$ with connectivity $\pi{(5,1)}$. The image of the decagon in $\mb X_{\pi(n,k)}$ is a pentagon.
\item If $x=k$ intersects the copy of $Z(5,3)$ at relative height $2$, then five square moves and five white trivalent moves occur in a subgraph of $G_k$ with connectivity $\pi{(5,2)}$. The image of the decagon in $\mb X_{\pi(n,k)}$ is another decagon.
\item If $x=k$ intersects the copy of $Z(5,3)$ at relative height $3$, then five square moves and five black trivalent moves occur in a subgraph of $G_k$ with connectivity $\pi{(5,3)}$. The image of the decagon in $\mb X_{\pi(n,k)}$ is another decagon.
\item If $x=k$ intersects the copy of $Z(5,3)$ at relative height $4$, then five black trivalent moves occur in a subgraph of $G_k$ with connectivity $\pi{(5,4)}$. The image of the decagon in $\mb X_{\pi(n,k)}$ is a pentagon.
\end{enumerate}

In all cases, when $Z(\gamma)$ is moved across the decagon, the image of the decagon is a vertex or 2-cell in $\mb X_{\pi(n,k)}$ which $\gamma$ can be moved across. 

Finally, let $Z(\gamma)'$ be a deformation of $Z(\gamma)$ by moving it across a $2$-cell. We have considered all possible 2-cells in $\mb Z_{n,3}$ and shown that there always exists a cell in $\mb X_{\pi(n,k)}$ which $\gamma$ can be moved across to create $\gamma'$ such that $Z(\gamma') = Z(\gamma)'$. Therefore by contracting $Z(\gamma)$ to a point in $\mb Z_{n,3}$ step-by-step while adjusting $\gamma$ along the way, $\gamma$ is also contracted to a point. %We can conclude that $\mb X_{\pi(n,k)}$ is simply connected.

We note that that the procedure of moving across the $2$-cells in fact builds a cellular contraction, cf. Definition~\ref{cellcontract}.
\end{proof}
This method of proof cannot be straightforwardly applied to the more general statement of Theorem~\ref{plabicycles}. Although a loop of flips for any connectivity $\pi^:$ could still be included in a loop of zonotopal tiling flips, the contraction of the loop might not stay inside the graph with connectivity $\pi^:$ (see the second question in Section~\ref{questions}). 

\newpage

\bibliographystyle{plain}
\bibliography{references}

\begin{thebibliography}{10}

\bibitem{Baues80}
Hans~J Baues.
\newblock {\em Geometry of loop spaces and the cobar construction}, volume 230.
\newblock American Mathematical Soc., 1980.

\bibitem{BilKapStu94}
Louis~J Billera, Mikhail~M Kapranov, and Bernd Sturmfels.
\newblock Cellular strings on polytopes.
\newblock {\em Proceedings of the American Mathematical Society},
  122(2):549--555, 1994.

\bibitem{BilStu92}
Louis~J Billera and Bernd Sturmfels.
\newblock Fiber polytopes.
\newblock {\em Annals of Mathematics}, pages 527--549, 1992.

\bibitem{FarGal18}
Miriam Farber and Pavel Galashin.
\newblock Weak separation, pure domains and cluster distance.
\newblock {\em Selecta Mathematica}, pages 1--35, 2018.

\bibitem{Fomin00}
Sergey Fomin and Andrei Zelevinsky.
\newblock Total positivity: tests and parametrizations.
\newblock {\em The Mathematical Intelligencer}, 22(1):23--33, 2000.

\bibitem{Gal17}
Pavel Galashin.
\newblock Plabic graphs and zonotopal tilings.
\newblock {\em Proceedings of the London Mathematical Society},
  117(4):661--681, 2018.

\bibitem{HenSpe10}
Andre Henriques and David~E Speyer.
\newblock The multidimensional cube recurrence.
\newblock {\em Advances in Mathematics}, 223:1107--1136, 2010.

\bibitem{OhSpe14}
Oh~Su Ho and David~E Speyer.
\newblock Links in the complex of weakly separated collections.
\newblock {\em Journal of Combinatorics}, 8(4):581--592, 2017.

\bibitem{KnuLamSpe2013}
Allen Knutson, Thomas Lam, and David~E Speyer.
\newblock Positroid varieties: juggling and geometry.
\newblock {\em Compositio Mathematica}, 149(10):1710--1752, 2013.

\bibitem{liu2018}
Gaku Liu.
\newblock A zonotope and a product of two simplices with disconnected flip
  graphs.
\newblock {\em Discrete \& Computational Geometry}, 59(4):810--842, 2018.

\bibitem{LubMasWag17}
Anna Lubiw, Zuzana Mas{\'a}rov{\'a}, and Uli Wagner.
\newblock A proof of the orbit conjecture for flipping edge-labelled
  triangulations.
\newblock {\em Discrete \& Computational Geometry}, 61(4):880--898, 2019.

\bibitem{Oh15}
Suho Oh, Alexander Postnikov, and David~E Speyer.
\newblock Weak separation and plabic graphs.
\newblock {\em Proceedings of the London Mathematical Society},
  110(3):721--754, 2015.

\bibitem{olarte2019}
Jorge~Alberto Olarte and Francisco Santos.
\newblock Hypersimplicial subdivisions.
\newblock {\em arXiv preprint arXiv:1906.05764}, 2019.

\bibitem{Pos06}
Alexander Postnikov.
\newblock Total positivity, {G}rassmannians, and networks.
\newblock {\em arXiv preprint math/0609764}, 2006.

\bibitem{Pos18}
Alexander Postnikov.
\newblock Positive {G}rassmannian and polyhedral subdivisions.
\newblock {\em arXiv preprint arXiv:1806.05307}, 2018.

\bibitem{RamSan00}
J{\"o}rg Rambau and Francisco Santos.
\newblock The generalized {B}aues problem for cyclic polytopes {I}.
\newblock {\em European Journal of Combinatorics}, 21(1):65--83, 2000.

\bibitem{RamZie96}
J{\"o}rg Rambau and G{\"u}nter~M Ziegler.
\newblock Projections of polytopes and the generalized {B}aues conjecture.
\newblock {\em Discrete \& Computational Geometry}, 16(3):215--237, 1996.

\bibitem{Shephard1974}
Geoffrey~C Shephard.
\newblock Combinatorial properties of associated zonotopes.
\newblock {\em Canadian Journal of Mathematics}, 26(2):302--321, 1974.

\bibitem{Sta63}
James~Dillon Stasheff.
\newblock Homotopy associativity of {H}-spaces. {I}.
\newblock {\em Transactions of the American Mathematical Society},
  108(2):275--292, 1963.

\bibitem{StuZie93}
Bernd Sturmfels and G{\"u}nter~M Ziegler.
\newblock Extension spaces of oriented matroids.
\newblock {\em Discrete \& Computational Geometry}, 10(1):23--45, 1993.

\bibitem{ThuD17}
Dylan~P Thurston.
\newblock From dominoes to hexagons.
\newblock In {\em Proceedings of the 2014 Maui and 2015 Qinhuangdao Conferences
  in Honour of Vaughan FR Jones’ 60th Birthday}, pages 399--414. Centre for
  Mathematics and its Applications, Mathematical Sciences Institute, The
  Australian National University, 2017.

\bibitem{Thu90}
William~P Thurston.
\newblock Conway's tiling groups.
\newblock {\em The American Mathematical Monthly}, 97(8):757--773, 1990.

\bibitem{Zie91}
G{\"u}nter~M Ziegler.
\newblock Higher {B}ruhat orders and cyclic hyperplane arrangements.
\newblock {\em Topology}, 32:259--279, 1993.

\end{thebibliography}

\end{document}